\numberwithin{equation}{section}%
\newcommand{\Z}{\mathbb{Z}}
\renewcommand{\C}{\mathbb{C}}
\newcommand{\R}{\mathbb{R}}
\DeclareMathOperator{\E}{\mathbb{E}}
\renewcommand{\i}{\mathbf{i}}
\DeclareMathOperator{\Prob}{\mathsf{Prob}}
\newcommand{\al}{\alpha}
\newcommand{\la}{\lambda}
\newcommand{\La}{\Lambda}
\newcommand{\eps}{\varepsilon}
\newcommand{\Ub}{\mathscr{U}}
\newcommand{\Hb}{\mathscr{H}}
\DeclareMathOperator{\diag}{\mathrm{diag}}
\DeclareMathOperator{\Tr}{\mathrm{Trace}}
\DeclareMathOperator{\D}{\mathscr{D}}
\DeclareMathOperator{\T}{\mathscr{T}}
\DeclareMathOperator*{\Res}{\mathrm{Res}}
\DeclareMathOperator*{\argmin}{\mathrm{argmin}}
\newcommand{\de}{{\mathsf{e}}}
\newcommand{\gap}{\mathsf{gap}}
\newcommand{\lozvd}{\begin{tikzpicture}[scale=.25]
	\draw [thick] (0,0) -- (.5,0.866) -- (1,0) -- (.5,-0.866) -- (0,0) -- (.5,0.866);
	\draw [thick, densely dotted] (0,0) -- (1,0);
\end{tikzpicture}}
\newcommand{\lozrd}{\begin{tikzpicture}[scale=.25]
	\draw [thick] (0,0) -- (.5,0.866) -- (1.5,0.866) -- (1,0) -- (0,0) -- (.5,0.866);
	\draw [thick, densely dotted] (1,0) -- (.5,0.866);
\end{tikzpicture}}
\newcommand{\lozld}{\begin{tikzpicture}[scale=.25]
	\draw [thick] (0,0) -- (-.5,0.866) -- (.5,0.866) -- (1,0) -- (0,0) -- (-.5,0.866);
	\draw [thick, densely dotted] (0,0) -- (.5,0.866);
\end{tikzpicture}}
\newcommand{\lozv}{\begin{tikzpicture}[scale=.25]
	\draw [thick] (0,0) -- (.5,0.866) -- (1,0) -- (.5,-0.866) -- (0,0) -- (.5,0.866);
\end{tikzpicture}}
\newtheorem{proposition}{Proposition}[section]
\newtheorem{lemma}[proposition]{Lemma}
\newtheorem{theorem}[proposition]{Theorem}
\theoremstyle{definition}
\newtheorem{remark}[proposition]{Remark}
\begin{document}
\title[From representation theory to Macdonald processes]{Integrable probability:\\
From representation theory to Macdonald processes}

\author[A. Borodin]{Alexei Borodin}
\address{A. Borodin,
Department of Mathematics, 
Massachusetts Institute of Technology,
77 Massachusetts ave.,
Cambridge, MA 02139, USA\newline
Institute for Information Transmission Problems, Bolshoy Karetny per. 19, Moscow, 127994, Russia}
\email{borodin@math.mit.edu}

\author[L. Petrov]{Leonid Petrov}
\address{L. Petrov,
Department of Mathematics, Northeastern University, 360 Huntington ave., Boston, MA 02115, USA\newline
Institute for Information Transmission Problems, Bolshoy Karetny per. 19, Moscow, 127994, Russia}
\email{lenia.petrov@gmail.com}

\begin{abstract}
	These are lecture notes for a mini-course given at the 
	Cornell Probability Summer School
	in July 2013.
	Topics include lozenge tilings of polygons and 
	their representation theoretic interpretation, the $(q,t)$-deformation of those 
	leading to the Macdonald processes, nearest neighbor dynamics 
	on Macdonald processes, their limit to semi-discrete Brownian polymers, and 
	large time asymptotic analysis of polymer's partition function. 
\end{abstract}

\maketitle

\setcounter{tocdepth}{2}
\tableofcontents
\setcounter{tocdepth}{2}

\section{Introduction} 
\label{sec:introduction}

One way to describe the content of these lecture notes is to say that they give 
a proof of the following statement (up to certain technical details that can 
be looked up in suitable articles).           

\begin{theorem}\label{Theorem1.1}
	Let $B_1,B_2,\dots$ be independent standard Brownian 
	motions. Define
	$$
	Z_N^t=
	\int\limits_{0<s_1<\ldots<s_{N-1}<t}
	e^{B_1(s_1)+
	\big(B_2(s_2)-B_2(s_1)\big)
	+\ldots
	+\big(B_N(\tau)-B_N(s_{N-1})\big)}
	ds_1 \ldots ds_{N-1}.
	$$
	Then, for any $\varkappa>0$,
	$$
	\lim_{N\to\infty} \text{\bf{P}} \left\{\frac{\log Z_N^{\varkappa 
	N}-c_1(\varkappa) N}{c_2(\varkappa) N^{1/3} }\le u \right\}=F_2(u)
	$$
	with certain explicit $\varkappa$-dependent constants $c_1,c_2>0$, where 
	$F_2(\,\cdot\,)$ is the distribution function of the GUE Tracy-Widom 
	distribution.
\end{theorem}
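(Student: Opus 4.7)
The plan is to realize $Z_N^{\varkappa N}$ as a scaling limit of an observable of a Macdonald process, to derive a Fredholm determinant representation for the Laplace transform of $Z_N^t$, and then to perform steepest descent asymptotic analysis. In more detail, I would first set up the Macdonald processes, probability measures on sequences of partitions built from the $(q,t)$-deformed Macdonald symmetric functions $P_\la, Q_\la$; these generalize the Schur processes which, as described earlier in the notes, encode uniformly random lozenge tilings. Since the Macdonald difference operators are diagonalized in the $P_\la$ basis, extracting their eigenvalues produces explicit contour integral formulas for averages of certain multiplicative observables under a Macdonald measure, and these can be recast in Fredholm determinant form.

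Second, I would degenerate $(q,t)$ to $t=0$ (the $q$-Whittaker case), so that $P_\la$ becomes the $q$-Whittaker function, and then take the scaling limit $q = e^{-\eps} \to 1$ with partition parts rescaled by $\eps$. This limit turns $q$-Whittaker functions into the class-one $\mathfrak{gl}_N$ Whittaker functions, and the $q$-Whittaker measure into O'Connell's Whittaker measure. A theorem of O'Connell identifies $\log Z_N^t$ with the top coordinate of the associated Whittaker process at time $t$, so the Fredholm formulas descend to an explicit expression for the Laplace transform $\E\bigl[\exp(-e^{-x} Z_N^t)\bigr]$ with $x \in \R$.

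Third, with the Fredholm determinant in hand, I would set $t = \varkappa N$ and $x = c_1(\varkappa) N + c_2(\varkappa) N^{1/3} u$, and analyze the determinant by steepest descent. One locates a double critical point $z_0 = z_0(\varkappa)$ of the exponent; the constants $c_1(\varkappa), c_2(\varkappa)$ are then determined by the value and third derivative of the action at $z_0$. Deforming the contours through $z_0$ and rescaling by $N^{-1/3}$ yields the Airy kernel in the limit, so the Fredholm determinant converges to $F_2(u)$. Since $\exp(-e^{-y})$ is an approximate step function on the $O(1)$ scale, this transfers into a statement about the distribution function of $\log Z_N^{\varkappa N}$.

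The main obstacle is the analytic part. The algebraic chain Macdonald $\to$ $q$-Whittaker $\to$ Whittaker $\to$ polymer, while technically involved, is a sequence of well-controlled degenerations assembled from the preceding sections of the notes. The steepest descent analysis, by contrast, is delicate: one must deform the two contours so that the real part of the exponent is simultaneously maximized at the same saddle, account for poles crossed during the deformation (which constrains the choice of initial contours), and obtain uniform bounds on the Fredholm series so that pointwise kernel convergence upgrades to convergence of the determinant. Carrying out these estimates and matching the limit to the Airy kernel is the technical heart of the argument.
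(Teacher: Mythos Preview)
Your outline is correct and would lead to a proof, but the route differs from the one the paper develops. You pass through the $q\to1$ limit to class-one $\mathfrak{gl}_N$ Whittaker functions and invoke O'Connell's identification of $\log Z_N^t$ with the top coordinate of the Whittaker process, then descend the Fredholm formula. The paper explicitly flags this as a shorter alternative in the introduction (combining O'Connell's result with the Borodin--Corwin--Remenik Fredholm identity), but its own argument deliberately avoids the Whittaker layer.

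Instead, the paper stays at the $q$-Whittaker level as long as possible. The $q$-moments $\E q^{k\la^{(N)}_N}$ are obtained from nested contour integrals (Proposition~\ref{prop:moments_of_the_qtasep}); these determine the distribution of $\la^{(N)}_N$ since $q^{\la^{(N)}_N}\in(0,1]$ is bounded. The contours are ``un-nested'' via a residue bookkeeping lemma (Lemma~\ref{lemma:unnesting}), and the resulting $q$-exponential generating series is summed into a $q$-Laplace transform $\E\big[1/((1-q)\zeta q^{\la^{(N)}_N};q)_\infty\big]$. The crucial step, absent from your sketch, is the Mellin--Barnes representation: the termwise $q\to1$ limit of this series would reproduce the \emph{divergent} moment generating function of $Z_N^t$ (intermittency, \S\ref{sub:intermittency}), but rewriting each $n$-sum as a contour integral in $s$ produces an expression that survives the limit and yields Theorem~\ref{thm:Laplace_polymer}. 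Only then does the paper take $q\to1$, using the weak convergence of Theorem~\ref{thm:polymer_limit} together with boundedness of the $q$-Laplace observable.

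What each approach buys: your Whittaker route is shorter and leans on O'Connell's structural theorem as a black box. The paper's route is more self-contained and makes transparent \emph{why} the naive replica/moment method fails and how the $q$-deformation rigorously regularizes it; the Mellin--Barnes step is the mechanism that converts a divergent replica series into a convergent integral, which also explains a posteriori why the physicists' replica trick (\S\ref{sub:replica_trick}) happens to give correct answers. The steepest descent endgame is the same in both.
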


The quantity $Z_N^t$ was introduced by O'Connell-Yor \cite{OConnellYor2001}, 
and it can be viewed as the partition function of a semi-discrete 
Brownian polymer (also sometimes referred to as the ``O'Connell-Yor polymer'').
The limit relation above shows that this polymer model belongs to 
the celebrated Kardar-Parisi-Zhang (KPZ) universality class, see 
Corwin \cite{CorwinKPZ} for details on the KPZ class and \S1.6 
of \cite{BorodinCorwin2011Macdonald} for 
more explanations, consequences, and references concerning the polymer 
interpretation. 

The exact value of $c_1(\varkappa)$ was conjectured by O'Connell-Yor \cite{OConnellYor2001} and 
proven by Moriarty-O'Connell \cite{MoriartyOConnell}, and the above limit theorem was proven 
by Borodin-Corwin \cite{BorodinCorwin2011Macdonald} for a restricted range of $\varkappa$ and by 
Borodin-Corwin-Ferrari \cite{BorodinCorwinFerrari2012} for all $\varkappa>0$. A nice physics-oriented 
explanation of $c_2(\varkappa)$ was given by Spohn \cite{Spohn2012}.

Although the most direct proof of this theorem would likely be quite a bit 
shorter than these notes, brevity was not our goal. Despite the probabilistic 
appearance of the statement, any of the known approaches to the proof would 
involve a substantial algebraic component, and the appearance of algebra at 
first seems at least slightly surprising. The goal of these lectures notes is 
to suggest the most logically straightforward path (in authors' opinion) that 
leads to the 
desired result, minimizing as much as possible the number of {\it ad hoc\/} 
steps one takes along the way. (For an interested reader we remark that a 
shorter proof of Theorem \ref{Theorem1.1} can be obtained via combining 
Corollary 4.2 of \cite{Oconnell2009_Toda}, Theorem 2 of 
\cite{BorodinCorwinRemenik}, and asymptotic analysis of 
\cite{BorodinCorwin2011Macdonald}.)

As we travel along our path (that naturally starts on the algebraic side --- in 
representation theory of unitary groups), we encounter other probabilistic 
models that are amenable to similar tools of analysis.
The approach that we develop has a number of other applications as well. 
It was so far used for (we refer the reader to the indicated references for further explanations)
\begin{itemize}
	\item asymptotics of the KPZ equation with a certain class of initial conditions \cite{BorodinCorwinFerrari2012};
	\item asymptotics of Log-Gamma fully discrete random directed polymers \cite{BorodinCorwinRemenik};
	\item asymptotics of $q$-TASEP and ASEP \cite{BorodinCorwinSasamoto2012}, \cite{FerrariVeto2013};
	\item analysis of new integrable (1+1)d interacting particle systems 
--- discrete time $q$-TASEPs of \cite{BorodinCorwin2013discrete}, $q$-PushASEP 
\cite{BorodinPetrov2013NN}, \cite{CorwinPetrov2013}, and $q$-Hahn TASEP 
\cite{Corwin2014qmunu};
	\item 
	establishing a law of large numbers for infinite random matrices over a finite field
	\cite{BufetovPetrov2014} (conjectured by Vershik and Kerov, see \cite{GorinKerovVershikFq2012});
	\item Gaussian Free Field asymptotics of the general beta Jacobi corners process \cite{BorodinGorin2013beta};
	\item developing spectral theory for the $q$-Boson particle system \cite{BorodinCorwinPetrovSasamoto2013} and other integrable particle systems
	\cite{BCPSprep};
	\item 
	asymptotics of probabilistic models 
	originating from representation theory of the 
	infinite-dimensional unitary group
	$U(\infty)$
	\cite{BorodinBufetov2012},
	\cite{BorodinBufetov2013},
	\cite{GorinBufetov2013free},
	\cite{BBO2013}.
\end{itemize}
The emerging 
domain of studying such models that enjoy the benefits of a rich algebraic 
structure behind, is sometimes called \emph{Integrable Probability}, and we 
refer the reader to the introduction of Borodin-Gorin \cite{BorodinGorinSPB12} for a 
brief discussion of the domain and of its name (the integrable nature of the 
semi-discrete polymer of Theorem \ref{Theorem1.1} was first established by 
O'Connell \cite{Oconnell2009_Toda}).
To a certain extent, the present 
text may be considered as a continuation of \cite{BorodinGorinSPB12}, but it can be 
also read independently. 

In contrast with \cite{BorodinGorinSPB12}, in our exposition below we do not shy away from the representation theoretic background and intuition that were essential in developing the subject. We also focus on proving a single theorem, rather than describing the variety of other related problems listed above, in order to discuss in depth the analytic difficulties arising in converting an algebraic formalism into analytic statements. These difficulties are related to the phenomenon of {\it intermittency} and popular yet highly non-rigorous and sometimes dangerous {\it replica trick} favoured by physicists, and one of our goals is to show how raising the amount of ``algebraization'' of the problem can be used to overcome them.

\medskip

The notes are organized as follows. 

In Section \ref{sec:lozenge_tilings_and_representation_theory}, we explain how 
lozenge tilings of a class of polygons on the 
triangular lattice can be interpreted via representation theory of the unitary 
groups, and how this leads to contour integral formulas for averages of various 
observables. 

In Section \ref{sec:asymptotics}, we show, in a specific example, how the steepest descent analysis 
of the obtained contour integrals yields meaningful probabilistic information 
about lozenge tilings. 

Section \ref{sec:markov_dynamics} describes an approach to constructing local Markov dynamics on 
lozenge tilings, and how (1+1)-dimensional interacting particle systems (like 
usual and long range Totally Asymmetric Simple Exclusion Processes (TASEPs)) 
arise as marginals of such dynamics. The approach we describe is relatively 
recent; it was developed in Borodin-Petrov \cite{BorodinPetrov2013NN} 
(an extension of the method will appear in \cite{BufetovPetrov2014}).

Section \ref{sec:_q_t_generalization} deals with a two-parameter (Macdonald, $(q,t)$-) generalization of 
the previous material. 

In Section \ref{sec:asymptotics_of_q_deformed_growth_models} we show how simple-minded asymptotics of the contour integrals in 
the $q$-deformation of lozenge tilings leads to semi-discrete Brownian 
polymers. The contour integrals describe the $q$-moments of the $q$-TASEP, an 
integrable deformation of the usual TASEP. 

Section \ref{sec:moments_for_q_whittaker_processes} explains difficulties which arise if one straightforwardly tries 
to describe the distribution of the polymer partition function using its 
moments. The latter 
come out naturally as limits of the $q$-TASEP's $q$-moments. 

In final Section \ref{sec:laplace_transforms} we demonstrate how those 
difficulties can be overcome 
through considering the Laplace transform of the polymer partition function
and its $q$-analog for the $q$-TASEP particle locations.

\subsection*{Acknowledgments}
These are notes for lectures delivered at the 2013 Cornell Probability
Summer School, and we would like to thank the organizers for the 
invitation and warm hospitality. 
We are also very grateful to Ivan Corwin and Vadim Gorin for numerous valuable 
comments, and we thank the anonymous referee for several helpful remarks. 
AB~was partially supported by the NSF
grant DMS-1056390.
LP~was partially supported by 
the RFBR-CNRS grants 10-01-93114 and 11-01-93105.


\section{Lozenge tilings and representation theory} 
\label{sec:lozenge_tilings_and_representation_theory}

We begin with a discussion of a 
well-known 
probabilistic 
model of 
randomly 
tiling a 
hexagon drawn on the triangular lattice, and explain its relation to 
representation theory of unitary groups. This relation
produces rather natural tools for analysis of uniformly random lozenge tilings of the 
hexagon.

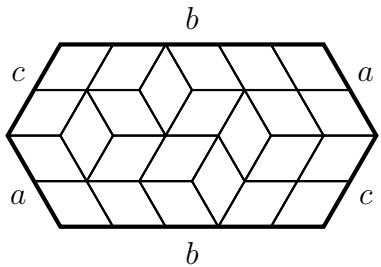
\begin{figure}[htbp]
	\begin{center}
\begin{tikzpicture}
			[scale=.7, ultra thick]
			\def\rt{0.866025}
			\node at (-.5-.3,\rt-.3) {$a$};
			\node at (-.5-.3,3*\rt+.3) {$c$};
			\node at (6-.2,3*\rt+.3) {$a$};
			\node at (6-.2,\rt-.3) {$c$};
			\node at (2.5,-.5) {$b$};
			\node at (2.5,4*\rt+.5) {$b$};
			\node at (2.5,4*\rt+.8) {{}};
			\draw 
			(0,0) --++ (5,0) --++ (1,2*\rt) 
			--++ (-1,2*\rt) --++ (-5,0)
			--++ (-1,-2*\rt) --++ (1,-2*\rt) -- cycle;
			\def\sml{1}
			\def\vlbx{-1/2}
			\def\vlby{1}
			\foreach \vl in {
			(\vlbx+3,\vlby*\rt),
			(\vlbx+3.5,\vlby*\rt+\rt),
			(\vlbx+.5,\vlby*\rt+\rt),
			(\vlbx+2,\vlby*\rt+2*\rt)
			}
			{
			\begin{scope}[shift=\vl,scale=\sml]
				\draw [thick] (0,0) -- (.5,\rt) -- (1,0) -- (.5,-\rt) -- cycle;
			\end{scope}}
			\def\llbx{0}
			\def\llby{0}
			\foreach \ll in {
			(\llbx,\llby),(\llbx+1,\llby),(\llbx+2,\llby),
			(\llbx-1/2,\llby+\rt),
			(\llbx+1,\llby+2*\rt),
			(\llbx+4,\llby+2*\rt),(\llbx+5,\llby+2*\rt),
			(\llbx+2.5,\llby+3*\rt),(\llbx+3.5,\llby+3*\rt),(\llbx+4.5,\llby+3*\rt)
			}
			{
			\begin{scope}[shift=\ll,scale=\sml]
				\draw [thick] (0,0) -- (-.5,\rt) -- (.5,\rt) -- (1,0) -- cycle;
			\end{scope}}
			\def\rlbx{0}
			\def\rlby{0}
			\foreach \rl in {
			(\rlbx+3,\rlby),(\rlbx+4,\rlby),
			(\rlbx+.5,\rlby+\rt),(\rlbx+1.5,\rlby+\rt),(\rlbx+3.5,\rlby+\rt),(\rlbx+4.5,\rlby+\rt),
			(\rlbx-1,\rlby+2*\rt),(\rlbx+2,\rlby+2*\rt),
			(\rlbx-.5,\rlby+3*\rt),(\rlbx+.5,\rlby+3*\rt)
			}
			{
			\begin{scope}[shift=\rl,scale=\sml]
				\draw [thick] 
				(0,0) -- (.5,\rt) -- (1.5,\rt) -- (1,0) -- cycle;
			\end{scope}}
		\end{tikzpicture}
		\end{center}
\caption{An example of a lozenge tiling of the hexagon with sides
$a,b,c,a,b,c$, where
$a=2$, $b=5$, and $c=2$.}
\label{fig:tiling_abc}
\end{figure}

\subsection{Lozenge tilings of a hexagon} 
\label{sub:lozenge_tilings_of_a_hexagon}

Consider the problem of tiling a hexagon 
with sides of length $a,b,c,a,b,c$ 
drawn on the triangular lattice 
by \emph{lozenges} that are defined
as pairs of triangles glued together (see Fig.~\ref{fig:tiling_abc}a).
Here $a$, $b$, and $c$ are any positive integers, and we assume that
the side of an elementary triangle has length $1$.
There are three different types of lozenges: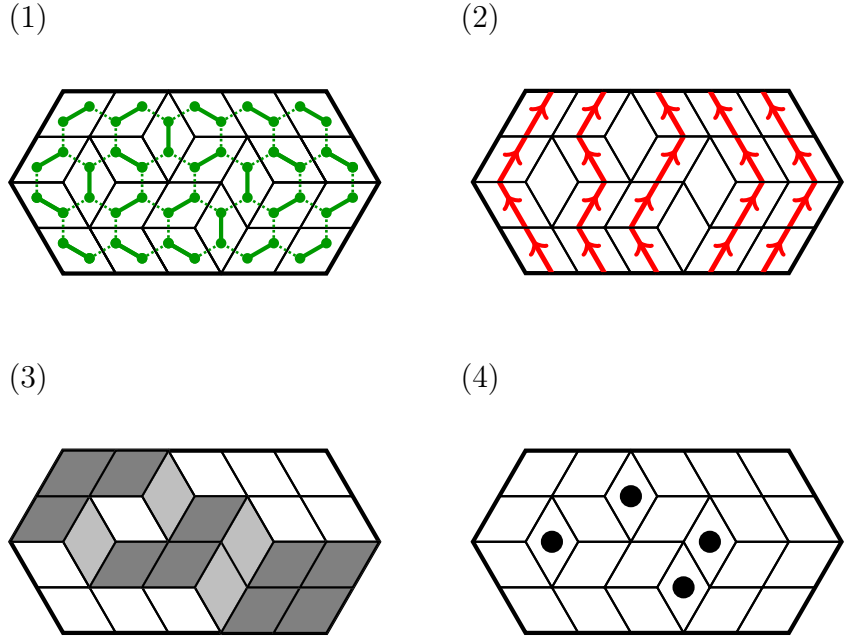
\begin{figure}[htbp]
	\begin{center}
		\begin{tabular}{ll}
		(1)&\hspace{20pt}(2)\\
		\begin{tikzpicture}
			[scale=.7, ultra thick]
			\def\rt{0.866025}
			\node at (2.5,4*\rt+.8) {{}};
			\draw 
			(0,0) --++ (5,0) --++ (1,2*\rt) 
			--++ (-1,2*\rt) --++ (-5,0)
			--++ (-1,-2*\rt) --++ (1,-2*\rt) -- cycle;
			\def\sml{1}
			\def\vlbx{-1/2}
			\def\vlby{1}
			\foreach \vl in {
			(\vlbx+3,\vlby*\rt),
			(\vlbx+3.5,\vlby*\rt+\rt),
			(\vlbx+.5,\vlby*\rt+\rt),
			(\vlbx+2,\vlby*\rt+2*\rt)
			}
			{
			\begin{scope}[shift=\vl,scale=\sml]
				\draw [thick] (0,0) -- (.5,\rt) -- (1,0) -- (.5,-\rt) -- cycle;
				\draw [color=green!60!black] (.5,-\rt/3) -- (.5,\rt/3);
				\draw [color=green!60!black, fill] (.5,-\rt/3) circle (.06);
				\draw [color=green!60!black, fill] (.5,\rt/3) circle (.06);
			\end{scope}}
			\def\llbx{0}
			\def\llby{0}
			\foreach \ll in {
			(\llbx,\llby),(\llbx+1,\llby),(\llbx+2,\llby),
			(\llbx-1/2,\llby+\rt),
			(\llbx+1,\llby+2*\rt),
			(\llbx+4,\llby+2*\rt),(\llbx+5,\llby+2*\rt)
			}
			{
			\begin{scope}[shift=\ll,scale=\sml]
				\draw [thick] (0,0) -- (-.5,\rt) -- (.5,\rt) -- (1,0) -- cycle;
				\draw [color=green!60!black] (.5,\rt/3) --++ (-.5,\rt/3);
				\draw [color=green!60!black, fill] (0,2*\rt/3) circle (.06);
				\draw [color=green!60!black, fill] (.5,\rt/3) circle (.06);
			\end{scope}}
			\foreach \ll in {
			(\llbx,\llby),(\llbx+1,\llby),(\llbx+2,\llby),
			(\llbx-1/2,\llby+\rt),
			(\llbx+1,\llby+2*\rt),
			(\llbx+4,\llby+2*\rt),(\llbx+5,\llby+2*\rt),
			(\llbx+2.5,\llby+3*\rt),(\llbx+3.5,\llby+3*\rt),(\llbx+4.5,\llby+3*\rt)
			}
			{
			\begin{scope}[shift=\ll,scale=\sml]
				\draw [thick] (0,0) -- (-.5,\rt) -- (.5,\rt) -- (1,0) -- cycle;
				\draw [color=green!60!black] (.5,\rt/3) --++ (-.5,\rt/3);
				\draw [color=green!60!black, fill] (0,2*\rt/3) circle (.06);
				\draw [color=green!60!black, fill] (.5,\rt/3) circle (.06);
			\end{scope}}
			\def\rlbx{0}
			\def\rlby{0}
			\foreach \rl in {
			(\rlbx+.5,\rlby+\rt),(\rlbx+1.5,\rlby+\rt),(\rlbx+3.5,\rlby+\rt),(\rlbx+4.5,\rlby+\rt),
			(\rlbx-1,\rlby+2*\rt),(\rlbx+2,\rlby+2*\rt)
			}
			{
			\begin{scope}[shift=\rl,scale=\sml]
				\draw [thick] 
				(0,0) -- (.5,\rt) -- (1.5,\rt) -- (1,0) -- cycle;
				\draw [color=green!60!black] (.5,\rt/3) --++ (.5,\rt/3);
				\draw [color=green!60!black, fill] (1,2*\rt/3) circle (.06);
				\draw [color=green!60!black, fill] (.5,\rt/3) circle (.06);
			\end{scope}}
			\foreach \rl in {
			(\rlbx-.5,\rlby+3*\rt),(\rlbx+.5,\rlby+3*\rt)
			}
			{
			\begin{scope}[shift=\rl,scale=\sml]
				\draw [thick] 
				(0,0) -- (.5,\rt) -- (1.5,\rt) -- (1,0) -- cycle;
				\draw [color=green!60!black] (.5,\rt/3) --++ (.5,\rt/3);
				\draw [color=green!60!black, fill] (1,2*\rt/3) circle (.06);
				\draw [color=green!60!black, fill] (.5,\rt/3) circle (.06);
			\end{scope}}
			\foreach \rl in {
			(\rlbx+3,\rlby),(\rlbx+4,\rlby)
			}
			{
			\begin{scope}[shift=\rl,scale=\sml]
				\draw [thick] 
				(0,0) -- (.5,\rt) -- (1.5,\rt) -- (1,0) -- cycle;
				\draw [color=green!60!black] (.5,\rt/3) --++ (.5,\rt/3);
				\draw [color=green!60!black, fill] (1,2*\rt/3) circle (.06);
				\draw [color=green!60!black, fill] (.5,\rt/3) circle (.06);
			\end{scope}}
			\foreach \ver in
			{
			(0,2*\rt/3),(1,2*\rt/3),(2,2*\rt/3),(4,2*\rt/3),(5,2*\rt/3),
			(-.5,2*\rt/3+\rt),(2-.5,2*\rt/3+\rt),(3-.5,2*\rt/3+\rt),(5-.5,2*\rt/3+\rt),(6-.5,2*\rt/3+\rt),
			(0,2*\rt/3+2*\rt),(1,2*\rt/3+2*\rt),(3,2*\rt/3+2*\rt),(4,2*\rt/3+2*\rt),(5,2*\rt/3+2*\rt)
			}
			{
			\begin{scope}[shift=\ver]
				\draw [color=green!60!black, densely dotted, line width=1]
				(0,0) -- (0,2*\rt/3);
			\end{scope}
			}
			\foreach \ri in
			{
			(.5,\rt/3),(1.5,\rt/3),(2.5,\rt/3),
			(0,\rt/3+\rt),(3,\rt/3+\rt),
			(0.5,\rt/3+2*\rt),(1.5,\rt/3+2*\rt),(3.5,\rt/3+2*\rt),(4.5,\rt/3+2*\rt),
			(2,\rt/3+3*\rt),(3,\rt/3+3*\rt),(4,\rt/3+3*\rt)
			}
			{
			\begin{scope}[shift=\ri]
				\draw [color=green!60!black, densely dotted, line width=1]
				(0,0) -- (.5,\rt/3);
			\end{scope}
			}
			\foreach \li in
			{
			(3.5,\rt/3),(4.5,\rt/3),
			(1,\rt/3+\rt),(2,\rt/3+\rt),(3,\rt/3+\rt),(4,\rt/3+\rt),(5,\rt/3+\rt),
			(.5,\rt/3+2*\rt),(2.5,\rt/3+2*\rt),(3.5,\rt/3+2*\rt),
			(1,\rt/3+3*\rt),(2,\rt/3+3*\rt)
			}
			{
			\begin{scope}[shift=\li]
				\draw [color=green!60!black, densely dotted, line width=1]
				(0,0) -- (-.5,\rt/3);
			\end{scope}
			}
		\end{tikzpicture}
		&\hspace{20pt}
		\begin{tikzpicture}
			[scale=.7, ultra thick]
			\def\rt{0.866025}
			\node at (2.5,4*\rt+.8) {{}};
			\draw 
			(0,0) --++ (5,0) --++ (1,2*\rt) 
			--++ (-1,2*\rt) --++ (-5,0)
			--++ (-1,-2*\rt) --++ (1,-2*\rt) -- cycle;
			\def\sml{1}
			\def\vlbx{-1/2}
			\def\vlby{1}
			\foreach \vl in {
			(\vlbx+3,\vlby*\rt),
			(\vlbx+3.5,\vlby*\rt+\rt),
			(\vlbx+.5,\vlby*\rt+\rt),
			(\vlbx+2,\vlby*\rt+2*\rt)
			}
			{
			\begin{scope}[shift=\vl,scale=\sml]
				\draw [thick] (0,0) -- (.5,\rt) -- (1,0) -- (.5,-\rt) -- cycle;
			\end{scope}}
			\def\llbx{0}
			\def\llby{0}
			\foreach \ll in {
			(\llbx,\llby),(\llbx+1,\llby),(\llbx+2,\llby),
			(\llbx-1/2,\llby+\rt),
			(\llbx+1,\llby+2*\rt),
			(\llbx+4,\llby+2*\rt),(\llbx+5,\llby+2*\rt),
			(\llbx+2.5,\llby+3*\rt),(\llbx+3.5,\llby+3*\rt),(\llbx+4.5,\llby+3*\rt)
			}
			{
			\begin{scope}[shift=\ll,scale=\sml]
				\draw[decoration={markings,
				    mark=at position .7 with {\arrow{>}}}, 
				    postaction={decorate},color = red, line width = 1.9] (0.5,0) --++ (-.5,\rt);
				\draw [thick] (0,0) -- (-.5,\rt) -- (.5,\rt) -- (1,0) -- cycle;
			\end{scope}}
			\def\rlbx{0}
			\def\rlby{0}
			\foreach \rl in {
			(\rlbx+3,\rlby),(\rlbx+4,\rlby),
			(\rlbx+.5,\rlby+\rt),(\rlbx+1.5,\rlby+\rt),(\rlbx+3.5,\rlby+\rt),(\rlbx+4.5,\rlby+\rt),
			(\rlbx-1,\rlby+2*\rt),(\rlbx+2,\rlby+2*\rt),
			(\rlbx-.5,\rlby+3*\rt),(\rlbx+.5,\rlby+3*\rt)
			}
			{
			\begin{scope}[shift=\rl,scale=\sml]
				\draw[decoration={markings,
				    mark=at position .7 with {\arrow{>}}}, 
				    postaction={decorate},color = red, line width = 1.9] (0.5,0) --++ (.5,\rt);
				\draw [thick] 
				(0,0) -- (.5,\rt) -- (1.5,\rt) -- (1,0) -- cycle;
			\end{scope}}
		\end{tikzpicture}
		\\
		\end{tabular}

		\vspace{18pt}

		\begin{tabular}{ll}
		(3)&\hspace{20pt}(4)\rule{0pt}{19pt}\\
		\begin{tikzpicture}
			[scale=.7, ultra thick]
			\def\rt{0.866025}
			\node at (2.5,4*\rt+.8) {{}};
			\draw 
			(0,0) --++ (5,0) --++ (1,2*\rt) 
			--++ (-1,2*\rt) --++ (-5,0)
			--++ (-1,-2*\rt) --++ (1,-2*\rt) -- cycle;
			\def\sml{1}
			\def\vlbx{-1/2}
			\def\vlby{1}
			\foreach \vl in {
			(\vlbx+3,\vlby*\rt),
			(\vlbx+3.5,\vlby*\rt+\rt),
			(\vlbx+.5,\vlby*\rt+\rt),
			(\vlbx+2,\vlby*\rt+2*\rt)
			}
			{
			\begin{scope}[shift=\vl,scale=\sml]
				\draw [thick, color=gray!50!white,fill] (0,0) -- (.5,\rt) -- (1,0) -- (.5,-\rt) -- cycle;
				\draw [thick] (0,0) -- (.5,\rt) -- (1,0) -- (.5,-\rt) -- cycle;
			\end{scope}}
			\def\llbx{0}
			\def\llby{0}
			\foreach \ll in {
			(\llbx,\llby),(\llbx+1,\llby),(\llbx+2,\llby),
			(\llbx-1/2,\llby+\rt),
			(\llbx+1,\llby+2*\rt),
			(\llbx+4,\llby+2*\rt),(\llbx+5,\llby+2*\rt),
			(\llbx+2.5,\llby+3*\rt),(\llbx+3.5,\llby+3*\rt),(\llbx+4.5,\llby+3*\rt)
			}
			{
			\begin{scope}[shift=\ll,scale=\sml]
				\draw [thick] (0,0) -- (-.5,\rt) -- (.5,\rt) -- (1,0) -- cycle;
			\end{scope}}
			\def\rlbx{0}
			\def\rlby{0}
			\foreach \rl in {
			(\rlbx+3,\rlby),(\rlbx+4,\rlby),
			(\rlbx+.5,\rlby+\rt),(\rlbx+1.5,\rlby+\rt),(\rlbx+3.5,\rlby+\rt),(\rlbx+4.5,\rlby+\rt),
			(\rlbx-1,\rlby+2*\rt),(\rlbx+2,\rlby+2*\rt),
			(\rlbx-.5,\rlby+3*\rt),(\rlbx+.5,\rlby+3*\rt)
			}
			{
			\begin{scope}[shift=\rl,scale=\sml]
				\draw [thick, fill, color=gray] 
				(0,0) -- (.5,\rt) -- (1.5,\rt) -- (1,0) -- cycle;
				\draw [thick] 
				(0,0) -- (.5,\rt) -- (1.5,\rt) -- (1,0) -- cycle;
			\end{scope}}
		\end{tikzpicture}
		&\hspace{20pt}
		\begin{tikzpicture}
			[scale=.7, ultra thick]
			\def\rt{0.866025}
			\node at (2.5,4*\rt+.8) {{}};
			\draw 
			(0,0) --++ (5,0) --++ (1,2*\rt) 
			--++ (-1,2*\rt) --++ (-5,0)
			--++ (-1,-2*\rt) --++ (1,-2*\rt) -- cycle;
			\def\sml{1}
			\def\vlbx{-1/2}
			\def\vlby{1}
			\foreach \vl in {
			(\vlbx+3,\vlby*\rt),
			(\vlbx+3.5,\vlby*\rt+\rt),
			(\vlbx+.5,\vlby*\rt+\rt),
			(\vlbx+2,\vlby*\rt+2*\rt)
			}
			{
			\begin{scope}[shift=\vl,scale=\sml]
				\draw [thick] (0,0) -- (.5,\rt) -- (1,0) -- (.5,-\rt) -- cycle;
				\draw [fill] (.5,0) circle (.17);
			\end{scope}}
			\def\llbx{0}
			\def\llby{0}
			\foreach \ll in {
			(\llbx,\llby),(\llbx+1,\llby),(\llbx+2,\llby),
			(\llbx-1/2,\llby+\rt),
			(\llbx+1,\llby+2*\rt),
			(\llbx+4,\llby+2*\rt),(\llbx+5,\llby+2*\rt),
			(\llbx+2.5,\llby+3*\rt),(\llbx+3.5,\llby+3*\rt),(\llbx+4.5,\llby+3*\rt)
			}
			{
			\begin{scope}[shift=\ll,scale=\sml]
				\draw [thick] (0,0) -- (-.5,\rt) -- (.5,\rt) -- (1,0) -- cycle;
			\end{scope}}
			\def\rlbx{0}
			\def\rlby{0}
			\foreach \rl in {
			(\rlbx+3,\rlby),(\rlbx+4,\rlby),
			(\rlbx+.5,\rlby+\rt),(\rlbx+1.5,\rlby+\rt),(\rlbx+3.5,\rlby+\rt),(\rlbx+4.5,\rlby+\rt),
			(\rlbx-1,\rlby+2*\rt),(\rlbx+2,\rlby+2*\rt),
			(\rlbx-.5,\rlby+3*\rt),(\rlbx+.5,\rlby+3*\rt)
			}
			{
			\begin{scope}[shift=\rl,scale=\sml]
				\draw [thick]
				(0,0) -- (.5,\rt) -- (1.5,\rt) -- (1,0) -- cycle;
			\end{scope}}
		\end{tikzpicture}
		\end{tabular}
	\end{center}
	\caption{Various interpretations of a lozenge tiling.}
	\label{fig:tiling_abc1}
\end{figure}
$\lozvd$, $\lozld$, and $\lozrd$.
Such tilings 
(that are in a bijective correspondence 
with 
\emph{boxed plane
partitions})
can be interpreted in a variety of ways (see Fig.~\ref{fig:tiling_abc1}):
\begin{enumerate}[(1)]
	\item As \emph{dimers} (or \emph{perfect matchings}) on the dual hexagonal lattice.
	\item As sets of nonintersecting Bernoulli paths following 
	lozenges of two types 
	(\begin{tikzpicture}[scale=.25]
		\def\rt{0.866025}
		\draw[color = red, line width = 1.9] (0.5,0) --++ (-.5,\rt);
		\draw [thick] (0,0) -- (-.5,\rt) -- (.5,\rt) -- (1,0) -- cycle;
	\end{tikzpicture}
	and 
	\begin{tikzpicture}[scale=.25]
		\def\rt{0.866025}
		\draw[color = red, line width = 1.9] (0.5,0) --++ (.5,\rt);
		\draw [thick] 
		(0,0) -- (.5,\rt) -- (1.5,\rt) -- (1,0) -- cycle;
	\end{tikzpicture}
	with prescribed beginnings and ends.
	\item As stepped surfaces made of $1\times 1\times 1$ cubes.
	\item As interlacing configurations of 
	lattice points --- centers of lozenges of one of the types,
	say, $\lozv$, as on Fig. \ref{fig:tiling_abc1}.
	Such configurations must have a prescribed number of points in each \emph{horizontal}
	section that may depend on the section.
\end{enumerate}
Our first goal is to match this combinatorial object with a basic representation
theoretic one.


\subsection{Representations of unitary groups} 
\label{sub:representations_of_unitary_groups}

Denote by $\Ub(N)$ the (compact Lie)
group of all the unitary matrices\footnote{$U^{*}=U^{-1}$, where $U^*$ is the conjugate transpose.}
of size $N$.
A (finite-dimensional) \emph{representation} of $\Ub(N)$ is a continuous map
\begin{align*}
	T\colon \Ub(N)\to GL(m,\C)
\end{align*}
(for some $m=1,2,\ldots$)
which respects the 
group structure: 
$T(UV)=T(U)T(V)$, $U,V\in \Ub(N)$.
A representation is called \emph{irreducible}
if it has no \emph{invariant} subspaces $E\subset \C^{m}$
($E\ne 0$ or $\C^{m}$), i.e.,
such that $T(\Ub(N))E\subset E$.

The classification of irreducible representations
of $\Ub(N)$
(equivalently, of $GL(N,\C)$
by analytic continuation --- ``unitary trick'' of H. Weyl)
is one of high points of the classical representation
theory.
It is due to Hermann Weyl in mid-1920's.
In order to 
understand how it works, let us restrict $T$ 
to the abelian subgroup of diagonal 
unitary matrices
\begin{align*}
	\Hb_N:=\left\{\diag(e^{\i\varphi_1},\ldots,e^{\i\varphi_N})\colon \varphi_1,\ldots,\varphi_N\in\R\right\}.
\end{align*}
Any commuting family of (diagonalizable\footnote{Any
finite-dimensional representation of a finite or compact
group, in particular, $U(N)$, is unitary in a suitable basis
(e.g., see \cite{Zhelobenko-book-eng}), hence all our 
matrices are diagonalizable.}) matrices can be simultaneously 
diagonalized. In particular, this is true for $T(\Hb_N)$. 
Hence, for $1\le j\le m$,
\begin{align*}
	\C^{m}=\bigoplus_{j=1}^{m}\C v_j,\qquad
	T\left(\diag(e^{\i\varphi_1},\ldots,e^{\i\varphi_N})\right)v_j=
	t_j(e^{\i\varphi_1},\ldots,e^{\i\varphi_N})\cdot v_j,
\end{align*}
where
each $t_j$ is a continuous homomorphism
$\Hb_N\to\C$. Any such homomorphism
has the form
\begin{align*}
	t(z_1,\ldots,z_N)=z_1^{k_1}\ldots z_N^{k_N},\qquad k_1,\ldots,k_N\in\Z.
\end{align*}
Each $N$-tuple $(k_1,\ldots,k_N)\in\Z^{N}$
for $t=t_j$, $1\le j\le m$, is called a \emph{weight} 
of the representation $T$. 
There is a total of $m$ weights (which is the 
dimension of the representation).

\begin{theorem}[H. Weyl, see, e.g., \cite{Weyl1946}, \cite{Zhelobenko-book-eng}]
	Irreducible representations of $\Ub(N)$
	are in one-to-one correspondence with ordered $N$-tuples 
	$\la=(\la_1\ge \ldots\ge\la_N)\in\Z^{N}$.

	The correspondence is established by requiring that 
	$\la$ is the unique highest 
	(in lexicographic order) 
	weight of the 
	corresponding representation. Then the generating function 
	of all weights of this representation $T_\la$ can be written as
	\begin{align}
		&\nonumber
		\Tr\Big(T_\la\big(\diag(z_1,\ldots,z_N)\big)
		\Big)\\&\hspace{40pt}=\sum_{\text{$(k_1,\ldots,k_N)$ weight of $T_\la$}}
		z_1^{k_1}\ldots z_N^{k_N}
		=\frac{\det\big[z_i^{N+\la_j-j}\big]_{i,j=1}^{N}}
		{\det\big[z_i^{N-j}\big]_{i,j=1}^{N}}.
		\label{Schur}
	\end{align}
\end{theorem}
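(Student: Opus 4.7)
The plan is to prove the theorem in two stages: first classify the irreducibles by dominant signatures, then derive the character formula via the Weyl integration formula and Schur orthogonality. \textbf{Stage 1 (classification).} The excerpt has already decomposed any finite-dimensional $T$ into weight spaces indexed by tuples in $\Z^N$. The symmetric group $S_N$ acts on $\Hb_N$ by permuting the entries $e^{\i\varphi_j}$, and this action is implemented by conjugation with permutation matrices lying in $\Ub(N)$; intertwining with $T$ shows that the multiset of weights is $S_N$-invariant. Hence the lexicographically highest weight is automatically dominant, i.e.\ $\la_1\ge\cdots\ge\la_N$. Uniqueness of an irreducible with a given highest weight is standard: the highest-weight vector generates the whole representation under lowering operators of the complexified Lie algebra $\mathfrak{gl}_N(\C)$, and two such irreducibles are intertwined by the canonical map pairing their highest-weight vectors. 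For existence, I would construct the \emph{polynomial} irreducibles (those with $\la_N\ge 0$) as images of Young symmetrizers on $(\C^N)^{\otimes|\la|}$, and obtain the remaining ones by tensoring with integer powers of the determinant character $U\mapsto\det(U)^k$.

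\textbf{Stage 2 (character formula).} The key analytic tool is the Weyl integration formula
$$
\int_{\Ub(N)}f(U)\,dU=\frac{1}{N!}\int_{\Hb_N}f\bigl(\diag(z_1,\ldots,z_N)\bigr)\,\bigl|\Delta(z)\bigr|^{2}\,dz,
$$
valid for class functions $f$, where $\Delta(z):=\det[z_i^{N-j}]_{i,j=1}^N$ and $dz$ is normalized Haar measure on $\Hb_N\cong(S^1)^N$. Combined with Schur orthogonality of irreducible characters, this says that the characters $\chi_\la$, restricted to $\Hb_N$, form an orthonormal system under the measure $N!^{-1}|\Delta(z)|^2\,dz$. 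Since $\chi_\la$ is symmetric in $z$ (being a class function) while $\Delta$ is anti-symmetric, the product $\Delta\cdot\chi_\la$ is an anti-symmetric Laurent polynomial with integer coefficients, hence a $\Z$-linear combination of the anti-symmetrized monomials $A_\mu(z):=\det[z_i^{\mu_j}]$ for strictly decreasing $\mu\in\Z^N$. These $A_\mu$ are pairwise orthogonal on $(S^1)^N$ with square norm $N!$, so orthonormality of $\chi_\la$ forces $\Delta\cdot\chi_\la=\pm A_\mu$ for a single $\mu$. Matching the leading monomial $z_1^{\la_1}\cdots z_N^{\la_N}$ in $\chi_\la$ with the diagonal of the determinant then pins down $\mu_j=N+\la_j-j$ with coefficient $+1$, yielding formula \eqref{Schur}.

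\textbf{Main obstacle.} The hardest step is the existence part of Stage 1 uniformly in $\la\in\Z^N$: the polynomial case is handled concretely by Young symmetrizers, but covering all signatures requires either the explicit determinantal twist or the intrinsic highest-weight theory for $\mathfrak{gl}_N(\C)$ via Verma modules and their finite-dimensional quotients, each with nontrivial finite-dimensionality and irreducibility checks. By contrast, Stage 2 is clean compact-group harmonic analysis once the Weyl integration formula is in hand; its proof is itself a Jacobian computation for the diagonalization map $\Ub(N)/\Hb_N\times\Hb_N\to\Ub(N)$, which is essentially where the Vandermonde $\Delta(z)$ enters the story.
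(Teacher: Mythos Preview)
The paper does not actually prove this theorem: it is stated as a classical result of Weyl with references to \cite{Weyl1946} and \cite{Zhelobenko-book-eng}, and the text moves on immediately to the Vandermonde evaluation and the branching lemma. So there is no proof in the paper to compare your proposal against.

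That said, your sketch is a correct and standard route. Stage~1 is fine; the one place to be a little more careful is the claim that the lexicographically highest weight is dominant---$S_N$-invariance of the weight multiset indeed gives this, but the argument that the highest-weight vector is unique (multiplicity one) and generates the representation really uses the root-space decomposition of $\mathfrak{gl}_N(\C)$ rather than just the torus, so you should say that explicitly. The existence argument via Young symmetrizers plus determinant twists is perfectly acceptable for $\Ub(N)$. Stage~2 is the classical ``orthogonality forces the character'' argument and is clean as written; the identification of the sign and of the exponent vector $\mu_j=N+\la_j-j$ from the leading term is exactly right. If you wanted an alternative for Stage~2 that avoids the Weyl integration formula, one can also derive \eqref{Schur} purely algebraically from the branching rule (Lemma~\ref{lemma:Schur_branching} in the paper) by induction on $N$, but your analytic route is the one Weyl himself used.
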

Note that the denominator in \eqref{Schur} is the Vandermonde determinant 
which evaluates to 
\begin{align*}
	\det\big[z_i^{N-j}\big]_{i,j=1}^{N}=\prod_{1\le i<j\le N}(z_i-z_j).
\end{align*}
The numerator in \eqref{Schur} is necessarily divisible 
by the denominator because of its skew-symmetry with respect to $z_i\leftrightarrow z_j$,
and thus the ratio is a finite linear combination 
of the monomials of the form $z_1^{k_1}\ldots z_N^{k_N}$,
$k_1,\ldots,k_N\in\Z$
(i.e., an element of $\C[z_1^{\pm1},\ldots,z_N^{\pm1}]^{{S}(N)}$).

The polynomials $\Tr(T_\la)$
are called \emph{Schur polynomials},
after Issai Schur, who used them in 
the representation theory of the symmetric group in his thesis around 1900.
However,
one of the earliest appearances of them
dates back to 
Cauchy
\cite{Cauchy1815} and
Jacobi
\cite{Jacobi1841}, 
over 100 years before Weyl's work.
The Schur polynomials are denoted by $s_\la=\Tr(T_\la)$.
Schur polynomials are, generally speaking, 
symmetric 
homogeneous Laurent polynomials in $N$ variables.

While the ratio of determinants 
formula
\eqref{Schur} 
is beautiful and concise 
(it is a special case of \emph{Weyl's character formula}
which works for any compact semi-simple Lie group),
is does not describe the set of weights 
explicitly. To do that, we need
an elementary
\begin{lemma}\label{lemma:Schur_branching}
	For any $\la=(\la_1\ge \ldots\ge\la_N)\in\Z^{N}$, 
	\begin{align}\label{Schur_branching}
		s_\la(z_1,\ldots,z_N)=
		\sum_{\mu\prec\la}s_\mu(z_1,\ldots,z_{N-1})z_N^{|\la|-|\mu|},
	\end{align}
	where the sum is taken over $\mu=(\mu_1,\ldots,\mu_{N-1})\in\Z^{N-1}$,
	the notation $\mu\prec\la$
	means the \emph{interlacing}
	\begin{align*}
		\la_N\le \mu_{N-1}\le \la_{N-1}\le \ldots\le \la_2\le \mu_1\le \la_1,
	\end{align*}
	and $|\la|=\sum_{j=1}^{N}\la_j$, $|\mu|=\sum_{j=1}^{N-1}\mu_j$.
\end{lemma}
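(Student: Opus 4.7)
The plan is to derive the branching identity directly from Weyl's character formula of the preceding theorem by a determinantal computation. Setting $z_N=z$ in \eqref{Schur} and using the factorization $\prod_{1\le i<j\le N}(z_i-z_j)\big|_{z_N=z}=\prod_{1\le i<j\le N-1}(z_i-z_j)\cdot\prod_{i=1}^{N-1}(z_i-z)$, the claim \eqref{Schur_branching} is equivalent to the polynomial identity
\[
\det\bigl[z_i^{\la_j+N-j}\bigr]_{i,j=1}^{N}\Big|_{z_N=z}=\prod_{i=1}^{N-1}(z_i-z)\cdot\sum_{\mu\prec\la}\det\bigl[z_i^{\mu_j+N-1-j}\bigr]_{i,j=1}^{N-1}z^{|\la|-|\mu|}.
\]

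To extract the Vandermonde factor $\prod_{i=1}^{N-1}(z_i-z)$ from the left-hand side, I subtract the last row from each of the first $N-1$ rows. Each new entry in row $i<N$ becomes
\[
z_i^{\la_j+N-j}-z^{\la_j+N-j}=(z_i-z)\sum_{k=0}^{\la_j+N-j-1}z_i^{k}z^{\la_j+N-j-1-k},
\]
so one copy of $(z_i-z)$ pulls out of each such row, producing the claimed Vandermonde factor. What remains is an $N\times N$ determinant whose last row is still $(z^{\la_1+N-1},\ldots,z^{\la_N})$ and whose first $N-1$ rows have entries $A_{ij}(z)=\sum_{k}z_i^{k}z^{\la_j+N-j-1-k}$. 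I then cofactor-expand this residual determinant along the unchanged last row, and apply Cauchy--Binet to each resulting $(N-1)\times(N-1)$ minor of $A$. The output is a sum over strictly increasing tuples $0\le m_1<\cdots<m_{N-1}$ of products $\det[z_i^{m_\alpha}]\cdot(\text{monomial in }z)$; after reindexing $m_{N-\alpha}=\mu_\alpha+N-1-\alpha$, the tuples that survive correspond exactly to partitions $\mu\prec\la$ and the monomial reduces to $z^{|\la|-|\mu|}$, reproducing the right-hand side.

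The main obstacle is the sign and cancellation bookkeeping in this last step: many Cauchy--Binet tuples do not correspond to an interlacing $\mu$, and their contributions must either vanish by the antisymmetry of the Vandermonde determinant or cancel pairwise under a sign-reversing involution, so that only the interlacing terms remain with the correct overall sign. The whole mechanism is already transparent in the base case $N=2$, where the lemma collapses to the telescoping
\[
(z_1-z)\sum_{\mu=\la_2}^{\la_1}z_1^\mu\, z^{\la_1+\la_2-\mu}=z_1^{\la_1+1}z^{\la_2}-z_1^{\la_2}z^{\la_1+1};
\]
a cleaner alternative to the Cauchy--Binet bookkeeping is induction on $N$ that iterates this one-variable telescoping, extracting the factor $(z_i-z)$ one row at a time.
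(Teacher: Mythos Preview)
Your approach is correct and is essentially the paper's one-line proof (``clear the denominators and compare coefficients''), just with the determinantal computation spelled out in more detail. The bookkeeping you flag as the main obstacle can be bypassed entirely: after your row operation, perform the column operation $C_j \mapsto C_j - z_N^{\ell_j-\ell_{j+1}} C_{j+1}$ for $j=1,\ldots,N-1$ (with $\ell_j=\la_j+N-j$), which zeros out the last row except in column $N$ and, in rows $i<N$, shrinks the $k$-sum in column $j$ to the disjoint range $\ell_{j+1}\le k<\ell_j$; expanding the resulting $(N-1)\times(N-1)$ minor by multilinearity in the columns then gives exactly the interlacing sum over $\mu\prec\la$ with the correct power of $z_N$, and no spurious Cauchy--Binet terms or sign-reversing involution is needed.
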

\begin{proof}
	Clear the denominators in \eqref{Schur_branching} and compare 
	coefficients by each of the monomials $z_1^{k_1}\ldots z_N^{k_N}$.
\end{proof}
Applying this lemma $N$ times, we see that 
the weights are in one-to-one correspondence with interlacing
triangular arrays of integers
\begin{align}
	\begin{array}{c}
	\begin{tikzpicture}[scale=1]
		\def\h{0.8}
		\def\x{2}
		\node at (-5,5) {$\la_N$};
		\node at (-3,5) {$\la_{N-1}$};
		\node at (0-\x/2,5) {$\ldots\ldots\ldots\ldots$};
		\node at (3-\x,5) {$\la_{2}$};
		\node at (5-\x,5) {$\la_{1}$};
		\node at (-4,5-\h) {$\mu_{N-1}$};
		\node at (-2,5-\h) {$\mu_{N-2}$};
		\node at (0-\x/2+.1,5-\h) {$\ldots$};
		\node at (2-\x,5-\h) {$\mu_{2}$};
		\node at (4-\x,5-\h) {$\mu_{1}$};
		\node at (-3,5-2*\h) {$\nu_{N-2}$};
		\node at (3-\x,5-2*\h) {$\nu_{1}$};
		\foreach \LePoint in {(4.5-\x,5-\h/2),(2.5-\x,5-\h/2),(-3.5,5-\h/2),
		(-2.5,5-3*\h/2),(3.5-\x,5-3*\h/2),(2.5-\x,5-5*\h/2),(1.7-\x,5-7*\h/2)} {
    		\node [rotate=45] at \LePoint {$\le$};
    	};
    	\foreach \GePoint in {(3.5-\x,5-\h/2),(-4.5,5-\h/2),(-2.5,5-\h/2),
    	(-3.5,5-3*\h/2),(-2.5,5-5*\h/2),(.5,5-3*\h/2),(-1.7,5-7*\h/2)} {
    		\node [rotate=135] at \GePoint {$\ge$};
    	};
    	\node at (0-\x/2,5-3*\h) {$\ldots\ldots\ldots\ldots$};
    	\node at (0-\x/2+.08,5-4*\h) {$\omega_1$};
	\end{tikzpicture}
	\end{array}
	\label{GT_scheme}
\end{align}
Such arrays are called \emph{Gelfand--Tsetlin schemes/patterns},
and they will play a prominent role in what follows.

Observe that if we shift all leftmost entries
of a Gelfand--Tsetlin scheme of depth (or height) $N$ by $0$,
the second to left ones by $1$, etc., then in the end
we obtain a similar array where some of the inequalities 
become strict:
\begin{align}
	\begin{array}{c}
	\begin{tikzpicture}[scale=1]
		\def\h{0.85}
		\def\x{.7}
		\def\y{.15}
		\node at (-5,5) {$\la_N$};
		\node at (-3+\y,5) {$\la_{N-1}+1$};
		\node at (0-\x/2,5) {$\ldots\ldots\ldots$};
		\node at (3-\x+0*\y,5) {$\la_{2}+N-2$};
		\node at (5-\x+9*\y,5) {$\la_{1}+N-1$};
		\node at (-4,5-\h) {$\mu_{N-1}$};
		\node at (-2+\y,5-\h) {$\mu_{N-2}+1$};
		\node at (0-\x/2,5-\h) {$\ldots$};
		\node at (2-\x+0*\y,5-\h) {$\mu_{2}+N-3$};
		\node at (4-\x+5*\y,5-\h) {$\mu_{1}+N-2$};
		\node at (-3,5-2*\h) {$\nu_{N-2}$};
		\node at (3-\x+2*\y,5-2*\h) {$\nu_{1}+N-3$};
		\foreach \LePoint in {(4.5-\x+5*\y,5-\h/2),(2.5-\x,5-\h/2),(-3.5,5-\h/2),
		(-2.5,5-3*\h/2),(3.5+3*\y-\x,5-3*\h/2),(2.5-\x,5-5*\h/2),(1.7-\x,5-7*\h/2)} {
    		\node [rotate=45] at \LePoint {$<$};
    	};
    	\foreach \GePoint in {(3.5+4*\y-\x,5-\h/2),(-4.5,5-\h/2),(-2.5,5-\h/2),
    	(-3.5,5-3*\h/2),(1.5+2*\y,5-3*\h/2),(-2.5,5-5*\h/2),(-1.7,5-7*\h/2)} {
    		\node [rotate=135] at \GePoint {$\ge$};
    	};
    	\node at (0-\x/2,5-3*\h) {$\ldots\ldots\ldots\ldots\ldots\ldots$};
    	\node at (0-\x/2+.08,5-4*\h) {$\omega_1$};
	\end{tikzpicture}
	\end{array}
	\label{shifted_GT_scheme}
\end{align}

\begin{proposition}\label{prop:U_a+c}
	Lozenge tilings of the hexagon 
	with sides $a,b,c,a,b,c$ (Fig.~\ref{fig:tiling_abc})
	are in one-to-one correspondence 
	with weights of the irreducible representation
	of the unitary group $\Ub(a+b)$
	with highest weight 
	\begin{align}
		\la=\big(\underbrace{b,b,\ldots,b}_{\text{$a$ times}},\underbrace{0,0,\ldots,0}_{\text{$c$ times}}\big).
		\label{la_for_hexagon}
	\end{align}
\end{proposition}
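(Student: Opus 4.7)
The plan is to build a bijection between lozenge tilings of the hexagon with sides $a,b,c,a,b,c$ and Gelfand--Tsetlin schemes of depth $N = a+c$ with top row equal to the weight $\la$ in \eqref{la_for_hexagon}. Granted such a bijection, the claim follows by iterating Lemma \ref{lemma:Schur_branching} exactly $N-1$ times: the result expresses $s_\la(z_1,\ldots,z_N)$ as a sum over all GT schemes with top row $\la$, each contributing a monomial whose exponents are the differences of consecutive row sums; by \eqref{Schur} these monomials enumerate, with multiplicity, the weights of $T_\la$.

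First I use interpretation (4): mark the centers of the $\lozv$ lozenges of a given tiling. They lie on horizontal lines $y = k\sqrt{3}/2$ for $k=1,\ldots,N-1$. A counting argument in each horizontal strip of height $\sqrt{3}/2$ shows that the number $n_v(k)$ of $\lozv$ centers at level $k$ is determined by the hexagon alone: each $\lozl$ or $\lozr$ in the strip contributes one up-triangle and one down-triangle, whereas the top (resp.\ bottom) half of a $\lozv$ contributes only an up- (resp.\ down-) triangle. Comparing these counts with the widths $w(k)$ of the hexagon at height $k\sqrt{3}/2$ yields $n_v(k) - n_v(k-1) = w(k) - w(k-1)$, and with the boundary condition $n_v(0) = 0$ this gives $n_v(k) = \min(k, N-k, a, c)$. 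A local inspection of the three possible lozenge configurations at each interior lattice vertex shows, in addition, strict interlacing of $\lozv$ center coordinates across consecutive levels in the sense of \eqref{shifted_GT_scheme}.

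Next I match this ``trapezoidal'' interlacing array with a GT scheme whose top row is $\la$. When the top row is fixed as in \eqref{la_for_hexagon}, the interlacing constraints immediately freeze all but $n_v(k)$ entries in the $k$-th row (for $1 \le k \le N-1$) at the value $b$ or $0$; the remaining $n_v(k)$ free entries satisfy precisely the interlacing relations coming from the tiling. The shift of \eqref{shifted_GT_scheme} then identifies the free entries with a translate of the $x$-coordinates of the $\lozv$ centers at the corresponding level, completing the bijection.

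The main technical point will be the bookkeeping in the last step: choosing a coordinate frame on the hexagon so that the shifts align the free (integer-valued) GT entries with the $\lozv$ center $x$-coordinates (which are integer or half-integer depending on the parity of the level), and verifying that the frozen entries take exactly the values $b$ and $0$ dictated by \eqref{la_for_hexagon}. This is a routine but careful computation; by contrast, the counting and interlacing facts in the second paragraph reduce to purely local inspection of lozenge geometry.
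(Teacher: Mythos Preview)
Your proposal is correct and follows the same approach as the paper: record the horizontal coordinates of the vertical lozenges level by level, observe interlacing, and identify the result with a shifted Gelfand--Tsetlin scheme whose top row is $\la$; the frozen GT entries you identify are exactly the forced vertical lozenges the paper draws outside the hexagon in Fig.~\ref{fig:tilings_weights}. The paper's proof is literally ``By picture'', so your write-up is a more explicit version of the same argument rather than a different one.
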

\begin{proof}
	By picture, see Fig.~\ref{fig:tilings_weights}.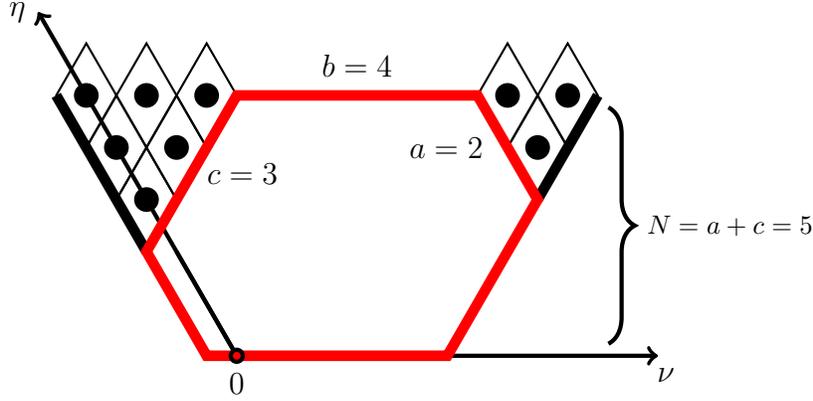
\begin{figure}[htbp]
	\begin{center}
		\begin{tikzpicture}
			[scale=.8, ultra thick,
			block/.style ={rectangle, thick, draw=black, text width=13em,
			align=center, rounded corners, minimum height=2em}]
			\def\rt{0.866025}
			\draw[->] (-1,2*\rt)--(0,0) -- (7,0) node[below, xshift=3] {$\nu$};
			\draw[->] (0,0) -- (-3.3,6.6*\rt) node[left] {$\eta$};
			\def\sml{1}
			\def\vlbx{-\sml/2}
			\def\vlby{0}
			\foreach \vl in {
			(\vlbx-1.5,\vlby*\rt+3*\rt),
			(\vlbx-2,\vlby*\rt+4*\rt),(\vlbx-1,\vlby*\rt+4*\rt),
			(\vlbx-2.5,\vlby*\rt+5*\rt),(\vlbx-1.5,\vlby*\rt+5*\rt),(\vlbx-.5,\vlby*\rt+5*\rt),
			(\vlbx+4.5,\vlby*\rt+5*\rt),(\vlbx+5.5,\vlby*\rt+5*\rt),
			(\vlbx+5,\vlby*\rt+4*\rt)}
			{
			\begin{scope}[shift=\vl,scale=\sml]
				\draw [thick] (0,0) -- (.5,\rt) -- (1,0) -- (.5,-\rt) -- cycle;
				\draw [fill] (.5,0) circle (.17);
			\end{scope}}
			\draw [line width=4] (-1.5,2*\rt) --++(-1.5,3*\rt);
			\draw [line width=4] (5,3*\rt) --++(1,2*\rt);
			\draw [color=red, line width = 4] (-.5,0)
			--++ (-1,2*\rt) --++(1.5,3*\rt) --++(4,0) --++(1,-2*\rt) 
			--++(-1.5,-3*\rt)--++(-4,0)--cycle;
			\draw (0,0) circle (.1) node [below, yshift=-2] {0};
			\node [above] at (2,5*\rt+.1) {$b=4$};
			\node [right] at (-.7,3.5*\rt) {$c=3$};
			\node [left] at (4.3,4*\rt) {$a=2$};
			\usetikzlibrary{decorations.pathreplacing}
			\draw [decorate,decoration={brace,amplitude=10pt,mirror,raise=4pt},yshift=0pt]
			(6,0.2) -- (6,5*\rt-.2) node [black,midway,xshift=50] {\footnotesize
			$N=a+c=5$};
		\end{tikzpicture}
	\end{center}
	\caption{On the correspondence between lozenge
	tilings of a hexagon and weights.}
	\label{fig:tilings_weights}
\end{figure}
	If we coordinatize by taking the centers of the 
	vertical lozenges in the coordinate
	system on the picture, then we read off the shifted Gelfand--Tsetlin
	schemes \eqref{shifted_GT_scheme}.
\end{proof}
The total number of weights of $T_\la$ (or,
equivalently, the dimension of the representation) 
was denoted by $m=m(\la)$ above, and it is given by
\begin{proposition}\label{prop:s_la_1111}
	For any $\la=(\la_1\ge \ldots\ge\la_N)\in\Z^{N}$,
	\begin{align}\label{dim_T_lambda}
		\dim T_\la=
		s_\la\big(\underbrace{1,1,\ldots,1}_N\big)=
		\prod_{1\le i<j\le N}\frac{(\la_i-i)-(\la_j-j)}{j-i}.
	\end{align}
\end{proposition}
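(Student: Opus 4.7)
The first equality is essentially tautological: the Schur polynomial $s_\la$ is by definition the trace $\Tr(T_\la(\diag(z_1,\ldots,z_N)))$ from \eqref{Schur}, and setting $z_1=\cdots=z_N=1$ evaluates this at the identity matrix. Since $T_\la(I_N)$ is the identity operator on the $m$-dimensional representation space, its trace equals $m=\dim T_\la$. So the entire content of the proposition is the evaluation of $s_\la(1,\ldots,1)$ via the ratio-of-determinants formula \eqref{Schur}.

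Plugging $z_i=1$ directly into \eqref{Schur} gives $0/0$, so the plan is to regularize and take a limit. I would substitute $z_i=1+t\xi_i$ for a small parameter $t$ and generic parameters $\xi_1,\ldots,\xi_N$, expand $(1+t\xi_i)^{a}=\sum_{k\ge0}\binom{a}{k}(t\xi_i)^k$, and extract the leading-order behavior of each determinant as $t\to0$. Writing the matrix $[(1+t\xi_i)^{a_j}]_{i,j}$ as a product and applying the Cauchy--Binet formula (or, equivalently, expanding the determinant multilinearly in the rows), the sum over $N$-tuples of exponents $k_1<k_2<\cdots<k_N$ is dominated by the minimal choice $k_\ell=\ell-1$, giving
\begin{equation*}
\det\bigl[(1+t\xi_i)^{a_j}\bigr]_{i,j=1}^{N}=\det\!\left[\binom{a_j}{i-1}\right]_{i,j=1}^{N}\!\cdot\det\bigl[(t\xi_i)^{i-1}\bigr]_{i,j=1}^{N}+O\!\bigl(t^{N(N-1)/2+1}\bigr).
\end{equation*}
The Vandermonde factor $\det[(t\xi_i)^{i-1}]=t^{N(N-1)/2}\prod_{i<j}(\xi_j-\xi_i)$ appears in both numerator and denominator of \eqref{Schur} and cancels when $t\to 0$, leaving
\begin{equation*}
s_\la(1,\ldots,1)=\frac{\det\bigl[\binom{\la_j+N-j}{i-1}\bigr]_{i,j=1}^{N}}{\det\bigl[\binom{N-j}{i-1}\bigr]_{i,j=1}^{N}}.
\end{equation*}

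To finish, I would use that $\binom{n}{i-1}$ is a polynomial in $n$ of degree $i-1$ with leading coefficient $1/(i-1)!$. Performing row operations to replace each row by the corresponding pure monomial preserves the determinant up to the product of leading coefficients, so for any distinct integers $n_1,\ldots,n_N$,
\begin{equation*}
\det\!\left[\binom{n_j}{i-1}\right]_{i,j=1}^{N}=\prod_{i=1}^{N}\frac{1}{(i-1)!}\,\det\bigl[n_j^{i-1}\bigr]_{i,j=1}^{N}=\prod_{i=1}^{N}\frac{1}{(i-1)!}\prod_{1\le i<j\le N}(n_j-n_i),
\end{equation*}
the last step being the standard Vandermonde evaluation. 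Applying this with $n_j=\la_j+N-j$ upstairs and $n_j=N-j$ downstairs, the factorial factors cancel, and the ratio simplifies to $\prod_{i<j}\frac{(\la_j+N-j)-(\la_i+N-i)}{(N-j)-(N-i)}=\prod_{i<j}\frac{(\la_i-i)-(\la_j-j)}{j-i}$, as claimed.

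The main obstacle is purely bookkeeping: one must verify that the higher-order terms in the $t\to0$ expansion really do contribute uniformly to both determinants so that they drop out in the ratio. This follows because every term of order higher than $t^{N(N-1)/2}$ in the multilinear expansion vanishes at least as fast as $t^{N(N-1)/2+1}$, so the limit exists and is exactly the ratio of leading-order coefficients. No other subtlety arises, and the argument is entirely algebraic.
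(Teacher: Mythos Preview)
Your proof is correct. The paper's own proof is a one-line sketch pointing to two routes: either L'H\^opital's rule applied directly to the ratio \eqref{Schur}, or the substitution $z_i=q^{i-1}$ followed by $q\to1$. Your argument is a variant: you regularize by a generic perturbation $z_i=1+t\xi_i$ and extract leading terms via binomial expansion and Cauchy--Binet.

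The paper's $q$-substitution is slicker for one reason: with $z_i=q^{i-1}$ the matrix entry $z_i^{a_j}=(q^{a_j})^{i-1}$ is already of Vandermonde type, so both numerator and denominator evaluate \emph{exactly} (not just to leading order) as $\prod_{i<j}(q^{a_j}-q^{a_i})$, and the limit $q\to1$ is then immediate term by term. Your generic perturbation trades this exactness for generality and therefore requires the leading-order bookkeeping you describe; both routes land on the same Vandermonde ratio. One minor notational slip: your $\det[(t\xi_i)^{i-1}]_{i,j}$ has entries independent of $j$; you mean $\det[(t\xi_i)^{j-1}]_{i,j}$ (or its transpose).
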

This is a special case of \emph{Weyl's dimension formula} 
(which again works for any compact semi-simple Lie group).
\begin{proof}
	Follows from 
	\eqref{Schur} either directly (by the L'H\^opital's rule),
	or through the substitution
	$(z_1,\ldots,z_N)=(1,q,q^{2},\ldots,q^{N-1})$
	and the limit
	$q\to 1$.
\end{proof}


\subsection{Distribution of lozenges on a horizontal slice} 
\label{sub:distribution_of_lozenges_on_a_horizontal_slice}

Consider the uniform probability measure 
$\Prob_{a,b,c}$
on the space of all lozenge tilings of the hexagon
with sides $a,b,c,a,b,c$ (see Fig.~\ref{fig:tiling_abc}).
The normalizing factor in the measure
$\Prob_{a,b,c}$ (the so-called \emph{partition function})
is given in \eqref{dim_T_lambda} with $\la$ as in \eqref{la_for_hexagon}.

\begin{remark}
	For tilings 
	of the hexagon with sides of length $a,b,c,a,b,c$
	the 
	partition function was first computed 
	in a nicer product form
	\begin{align*}
		s_{
		(b,b,\ldots,b,0,0,\ldots,0)}
		=\prod_{i=1}^{a}
		\prod_{j=1}^{b}
		\prod_{k=1}^{c}\frac{i+j+k-1}{i+j+k-2}
	\end{align*}
	by MacMahon \cite{MacMahonBook}.
\end{remark}

Let us focus on 
what happens when we consider one horizontal
slice of our uniformly random lozenge tiling.
As above, we coordinatize it by 
locations of centers of the vertical lozenges.
It is easiest to assume that the slice
is close enough to one of the two horizontal boundaries,
say, the lowest one (see Fig.~\ref{fig:slice_h}).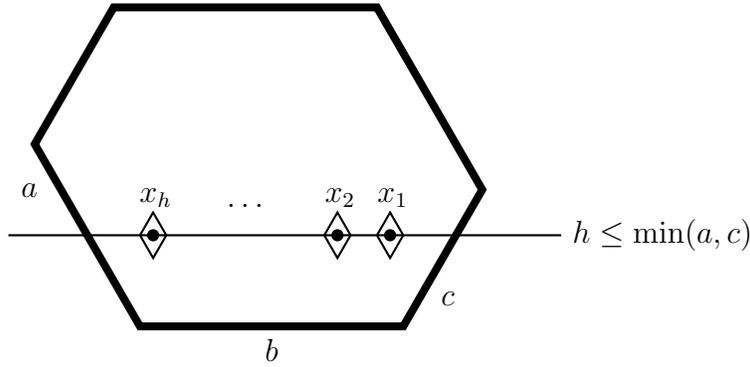
\begin{figure}[htbp]
	\begin{center}
		\begin{tikzpicture}
			[scale=.7, ultra thick]
			\def\rt{0.866025}
			\draw [line width=3]
			(0,0) --++ (5,0) --++ (1.5,3*\rt)
			--++(-2,4*\rt)--++(-5,0)--++(-1.5,-3*\rt)
			--++(2,-4*\rt)--cycle;
			\node at (-2.1,3*\rt) {$a$};
			\node at (2.5,-.45) {$b$};
			\node at (5.85,.6*\rt) {$c$};

			\draw[thick] (-2.5,2*\rt) --++(10.5,0)
			node [right] {$h\le \min(a,c)$};

			\def\sml{.5}
			\def\vlbx{-1/2}
			\def\vlby{1}
			\foreach \vl in {
			(\vlbx+4,\vlby*\rt+\rt),
			(\vlbx+.5,\vlby*\rt+\rt),
			(\vlbx+5,\vlby*\rt+\rt)
			}
			{
			\begin{scope}[shift=\vl,scale=\sml]
				\draw [thick] (0,0) -- (.5,\rt) -- (1,0) -- (.5,-\rt) -- cycle;
				\draw [fill] (.5,0) circle (.15);
			\end{scope}}
			\node [above] at (.3,2*\rt+.3) {$x_h$};
			\node [above] at (4.8,2*\rt+.3) {$x_1$};
			\node [above] at (3.8,2*\rt+.3) {$x_2$};
			\node [above] at (2,2*\rt+.3) {$\ldots$};
		\end{tikzpicture}
	\end{center}
	\caption{Horizontal slice of a lozenge tiling.}
	\label{fig:slice_h}
\end{figure}
\begin{proposition}\label{prop:horizontal_slice_distribution}
	For any $h$, $0\le h\le \min(a,c)$, the 
	distribution of lozenges on the horizontal slice
	at height $h$
	has the form
	\begin{align}\label{Prob_abc}
		\Prob_{a,b,c,h}
		\{x_1,\ldots,x_h\}=
		\mathrm{const}(a,b,c,h)\cdot 
		\prod_{1\le i<j\le h}(x_i-x_j)^{2}\prod_{i=1}^{h}w_{a,b,c,h}(x_i),
	\end{align}
	where 
	\begin{align}\label{w_abch}
		w_{a,b,c,h}(x)=\frac{(b+c-1-x)!\,(a-h+x)!}{x!\,(b+h-1-x)!}.
	\end{align}
\end{proposition}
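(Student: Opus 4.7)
The plan is to use the bijection of Proposition~\ref{prop:U_a+c}, which identifies $\Prob_{a,b,c}$ with the uniform measure on shifted Gelfand--Tsetlin patterns \eqref{shifted_GT_scheme} of depth $N=a+c$ whose top row encodes $\lambda=(b,\ldots,b,0,\ldots,0)$. Under this bijection the positions $x_1>\cdots>x_h$ of vertical lozenges on the slice at height $h$ become precisely the entries of the $h$-th row from the bottom of the pattern. The desired probability is therefore
$$\Prob_{a,b,c,h}\{x_1,\ldots,x_h\}=\frac{\mathrm{Below}(x)\cdot\mathrm{Above}(x)}{\dim T_{\lambda}},$$
where $\mathrm{Below}(x)$ counts interlacing completions of the bottom $h$ rows with top row $(x_1,\ldots,x_h)$, and $\mathrm{Above}(x)$ counts interlacing completions of rows $h+1,\ldots,N$ of the pattern with row $N$ fixed to $\lambda$.

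The factor $\mathrm{Below}(x)$ is immediate: by Lemma~\ref{lemma:Schur_branching} iterated $h-1$ times, $\mathrm{Below}(x)$ equals $\dim T_{\mu}^{\Ub(h)}$ where $\mu_i=x_i-(h-i)$ is the corresponding unshifted signature, and by Weyl's dimension formula (Proposition~\ref{prop:s_la_1111})
$$\dim T_{\mu}^{\Ub(h)}=\prod_{1\le i<j\le h}\frac{x_i-x_j}{j-i}.$$
This supplies one of the two copies of $\prod_{i<j}(x_i-x_j)$ required in \eqref{Prob_abc}.

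The main work is the computation of $\mathrm{Above}(x)$, which equals the skew Schur value $s_{\lambda/\mu}(\underbrace{1,\ldots,1}_{N-h})$. My preferred route is the Lindström--Gessel--Viennot lemma applied to the nonintersecting-paths description of the upper sub-region: paths start at the $h$ sources determined by $x$ on the cut and terminate at $h$ corresponding sinks on the top edge of the hexagon, giving an $h\times h$ determinant of binomial coefficients. The rectangularity of $\lambda=(b^a,0^c)$ is crucial: it permits standard row/column operations that reduce this determinant to a Vandermonde times a per-coordinate product, yielding
$$\mathrm{Above}(x)=\mathrm{const}'(a,b,c,h)\cdot\prod_{1\le i<j\le h}(x_i-x_j)\cdot\prod_{i=1}^{h}\frac{(b+c-1-x_i)!\,(a-h+x_i)!}{x_i!\,(b+h-1-x_i)!}.$$
A conceptually cleaner alternative is to reflect the upper sub-region upside down and recognize its tiling count as $\dim T_{\nu}^{\Ub(N-h)}$ for a signature $\nu=\nu(x;a,b,c,h)$, after which Proposition~\ref{prop:s_la_1111} applies a second time and furnishes the second Vandermonde together with the per-coordinate weight. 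Multiplying $\mathrm{Below}\cdot\mathrm{Above}/\dim T_\lambda$ and absorbing the $x$-independent factors into the normalization yields exactly \eqref{Prob_abc}--\eqref{w_abch}.

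The principal obstacle is the closed-form evaluation of $\mathrm{Above}(x)$: skew Schur polynomials do not factor in general, so the product formula relies essentially on the rectangularity of $\lambda$ together with careful bookkeeping of the coordinate shifts between $\mu$ and $(x_i)$ and between the two endpoints of each LGV path. Once that calculation is in hand, the assembly of the two pieces into \eqref{Prob_abc} is routine.
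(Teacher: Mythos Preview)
Your proposal is correct, and your ``conceptually cleaner alternative'' is exactly the paper's argument: cut at height $h$ and solve \emph{both} halves by the dimension formula~\eqref{dim_T_lambda}. After reflecting the upper piece and adjoining the forced corner lozenges (as in Fig.~\ref{fig:fill_fill}, right), its top row consists of the $h$ free coordinates $x_i$ together with $(a-h)+(c-h)$ frozen ones, and the Vandermonde cross-factors between the free and frozen coordinates reproduce precisely the single-site weight $w_{a,b,c,h}$. Your preferred LGV route would also succeed --- the rectangularity of $\lambda$ does make the binomial determinant factor --- but it is noticeably heavier than the paper's two-line ``dimension formula twice'' argument, and the bookkeeping you flag as the principal obstacle is largely a byproduct of choosing that route rather than an intrinsic difficulty of the statement.
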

\begin{remark}
	Probability measures of the form \eqref{Prob_abc}
	with arbitrary positive weight function 
	$w(\cdot)$ are known as \emph{orthogonal polynomial ensembles}
	as they are closely related to the orthogonal polynomials
	with weight $w$. 
	The measure \eqref{Prob_abc} itself is often
	referred to as the Hahn orthogonal polynomial ensemble,
	as this particular weight $w$ \eqref{w_abch} 
	corresponds to the classical Hahn orthogonal polynomials.
	See, e.g., \cite{Konig2005} and references therein
	for details.
\end{remark}
\begin{proof}
	We can cut the enumeration problem into two
	that look like those on Fig.~\ref{fig:fill_fill},
	and then multiply the results.
	Each of the two problems (compute the number of 
	tilings of the corresponding region with fixed top row) is solved by the 
	dimension formula \eqref{dim_T_lambda}.
	\begin{figure}[htbp]
	\begin{center}
		\begin{tikzpicture}
			[scale=.65, ultra thick]
			\def\rt{0.866025}
			\draw [line width=2]
			(-1,2*\rt)
			--(0,0)--(5,0)--++(1,2*\rt)
			--++(-1,0)
			--++(-.25,-\rt/2)
			--++(-.25,\rt/2)
			--++(-.5,0)
			--++(-.25,-\rt/2)
			--++(-.25,\rt/2)
			--++(-3,0)
			--++(-.25,-\rt/2)
			--++(-.25,\rt/2)
			--++(-1,0)--cycle;

			\node at (2.3,\rt) {\large {fill}};
			\def\sml{.5}
			\def\vlbx{-1/2}
			\def\vlby{1}
			\foreach \vl in {
			(\vlbx+4,\vlby*\rt+\rt),
			(\vlbx+.5,\vlby*\rt+\rt),
			(\vlbx+5,\vlby*\rt+\rt)
			}
			{
			\begin{scope}[shift=\vl,scale=\sml]
				\draw [thick] (0,0) -- (.5,\rt) -- (1,0) -- (.5,-\rt) -- cycle;
				\draw [fill] (.5,0) circle (.12);
			\end{scope}}
			\node [above] at (.3,2*\rt+.3) {$x_h$};
			\node [above] at (4.8,2*\rt+.3) {$x_1$};
			\node [above] at (3.8,2*\rt+.3) {$x_2$};
			\node [above] at (2,2*\rt+.3) {$\ldots$};
			\begin{scope}[shift={(9.5,3*\rt)}]
				\foreach \vl in {
				(\vlbx+1.5,\vlby*\rt+\rt),
				(\vlbx+.5,\vlby*\rt+\rt),
				(\vlbx+5,\vlby*\rt+\rt),
				(\vlbx+6.5,\vlby*\rt+\rt),(\vlbx+7,\vlby*\rt+\rt),(\vlbx+7.5,\vlby*\rt+\rt),(\vlbx+8,\vlby*\rt+\rt),
				(\vlbx+6.75,\vlby*\rt+\rt/2),(\vlbx+7.25,\vlby*\rt+\rt/2),(\vlbx+7.75,\vlby*\rt+\rt/2),
				(\vlbx+7,\vlby*\rt),(\vlbx+7.5,\vlby*\rt),
				(\vlbx+7.25,\vlby*\rt-\rt/2),
				(\vlbx-1,\vlby*\rt+\rt),(\vlbx-1.5,\vlby*\rt+\rt),(\vlbx-1.25,\vlby*\rt+\rt/2)
				}
				{
				\begin{scope}[shift=\vl,scale=\sml]
					\draw [thick] (0,0) -- (.5,\rt) -- (1,0) -- (.5,-\rt) -- cycle;
					\draw [fill] (.5,0) circle (.12);
				\end{scope}}
				\node [above] at (.3,2*\rt+.3) {$x_1$};
				\node [above] at (4.8,2*\rt+.3) {$x_h$};
				\node [above] at (1.3,2*\rt+.3) {$x_2$};
				\node [above] at (3,2*\rt+.3) {$\ldots$};
				\draw [line width=2]
				(0,2*\rt)
				--++(.25,-\rt/2)
				--++(.25,\rt/2)
				--++(.5,0)
				--++(.25,-\rt/2)
				--++(.25,\rt/2)
				--++(3,0)
				--++(.25,-\rt/2)
				--++(.25,\rt/2)
				--++(1,0)
				--++(1,-2*\rt)
				--++(-1.5,-3*\rt)
				--++(-5,0)
				--++(-2,4*\rt)
				--++(.5,\rt)--cycle;
				\node at (3,-.5*\rt) {\large {fill}};
			\end{scope}
		\end{tikzpicture}
	\end{center}
	\caption{Computing the distribution of 
	lozenges on a horizontal slice (cf.~Fig.~\ref{fig:slice_h})
	amounts to two enumeration
	problems.}
	\label{fig:fill_fill}
\end{figure}
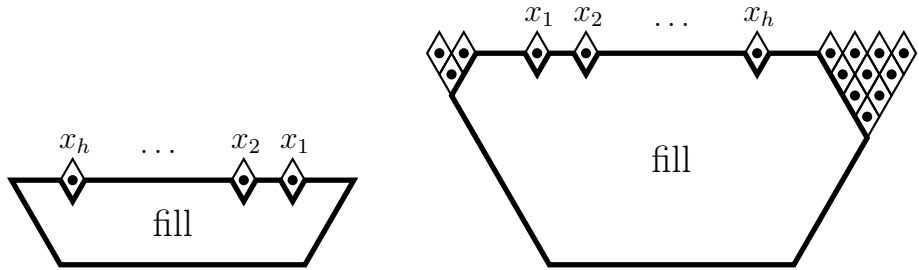
\end{proof}

The computation of Proposition \ref{prop:horizontal_slice_distribution}
already allows to see asymptotic transitions for a fixed $h$ (for example, 
$h=1$).
We can rewrite \eqref{w_abch} as
\begin{align}\label{w_abch_rewrite}
	&
	w_{a,b,c,h}(x)=\frac{(b+c-1)!\,(a-h)!}{(b+h-1)!}\times\\&\hspace{50pt}\times
	\left[\frac{1}{x!}\cdot
	\frac{(a-h+1)\ldots(a-h+x)\cdot(b+h-1)\ldots(b+h-x)}
	{(c+b-1)\ldots(c+b-x)}
	\right].
	\nonumber
\end{align}
One can consider the following limit regimes:
\begin{enumerate}[(1)]
	\item 
	If $a,b,c\to\infty$
	so that $ab/c\to t$,
	the first term just contributes to a constant,
	while the second one
	converges to $t^x/x!$.
	\item 
	In a similar way, if we keep $a$ finite and send 
	$b,c\to\infty$
	in such a way that $b/(b+c)\to\xi$, $0<\xi<1$, then we 
	see that the relevant part of \eqref{w_abch_rewrite}
	converges to 
	\begin{align*}
		\frac{(a-h+1)(a-h+2)\ldots(a-h+x)}{x!}\xi^x.
	\end{align*}
	\item
	A slightly more complicated limit transition 
	would be to take $a,b,c\to\infty$ so that the triple ratio $a:b:c$
	has a finite limit. Then Stirling's formula shows that 
	after proper shifting and scaling of $(x_1,\ldots,x_h)$, 
	which would not affect the factor 
	$\prod_{1\le i<j\le h}(x_i-x_j)^{2}$,
	the nontrivial part of $w_{a,b,c,h}(x)$
	converges to a Gaussian weight $e^{-x^2/2}$, $x\in\R$.
\end{enumerate}

There is also a representation-theoretic way 
to view these results. Restricting to a fixed 
horizontal slice means that we 
only care about the restriction of our representation
of $\Ub(a+c)$ (recall Proposition \ref{prop:U_a+c})
to the subgroup $\Ub(h)$
of matrices which are nontrivial (i.e., different from $\mathrm{Id}$) 
only in the top-left 
$h\times h$ corner. In terms of weights, we only care about
powers of $z_1,\ldots,z_h$ and substitute
$z_{h+1}=\ldots=z_N=1$.
This is equivalent to saying that the probability 
\eqref{Prob_abc}
of 
$(x_1,\ldots,x_h)=(\mu_1+h-1,\mu_2+h-2,\ldots,\mu_h)$
is the normalized coefficient of $s_\mu(z_1,\ldots,z_h)$ in the 
identity
\begin{align}\label{Prob_as_coeffs}
	\frac{s_\la(z_1,\ldots,z_{h},1,\ldots,1)}{s_\la(1,\ldots,1)}
	=\sum_{\mu_1\ge \ldots\ge \mu_h}
	\Prob_{a,b,c,h}\{\mu\}\cdot \frac{s_\mu(z_1,\ldots,z_h)}{s_\mu(1,\ldots,1)},
\end{align}
where $\la$ is as in \eqref{la_for_hexagon}, and we 
are dividing by the normalizing constants to have the ``$\Prob$''
coefficients add up to 1.
This corresponds to looking at relative dimensions 
of \emph{isotypical} subspaces 
(i.e., 
those that transform according to fixed irreducible representation)
rather than the actual ones.

The first two of the above three limit transitions turn 
\eqref{Prob_as_coeffs} into
\begin{align}
	&&\frac{ab}{c}\to t:\hspace{40pt}&
	\prod_{i=1}^{h}e^{t(z_i-1)}&=&
	\sum_{\mu_1\ge \ldots\ge \mu_h}\Prob_{t,h}\{\mu\}\cdot \frac{s_\mu(z_1,\ldots,z_h)}{s_\mu(1,\ldots,1)};
	\label{Prob_t}\\
	&&\frac{b}{b+c}\to\xi:\hspace{40pt}&
	\prod_{i=1}^{h}
	\frac{(1-\xi)^{a}}{(1-\xi z_i)^{a}}&=&
	\sum_{\mu_1\ge \ldots\ge \mu_h}\Prob_{a,\xi,h}\{\mu\}\cdot \frac{s_\mu(z_1,\ldots,z_h)}{s_\mu(1,\ldots,1)}.
	\label{Prob_axi}
\end{align}
In fact,
these two
limits $ab/c\to t$ and $b/(b+c)\to \xi$ correspond
to certain infinite-dimensional representations of the
infinite-dimensional unitary group $\Ub(\infty)=\varinjlim\Ub(N)$.

The third (Gaussian) limit is the eigenvalue projection 
of the matrix Fourier transform identity
\begin{align*}
	\int\limits_{\mathrm{Herm}(N)}e^{\i \Tr(AB)}\mathrm{M}(dB)=e^{-\Tr(A^{2})/2},
\end{align*}
where $\mathrm{Herm}(N)$ is the space of $N\times N$ Hermitian 
($H^*=H$)
matrices, $A\in\mathrm{Herm}(N)$, and $\mathrm{M}(dB)$ is the 
probability measure on $\mathrm{Herm}(N)$
with the density $e^{-\Tr(B^2)/2}dB$ also known as 
the \emph{Gaussian Unitary Ensemble} (or \emph{GUE}).
This limit is a special case of 
the so-called quasi-classical limit in representation
theory that degenerates ``large'' representations to 
probability measures on (co-adjoint orbits of)
the associated Lie algebra, 
e.g., see
\cite{TyanShanski1973},
\cite{Heckmann1982},
\cite{GuilleminSternberg1982}. 
For a broad survey of quantization ideas in 
representation theory see e.g. 
\cite{Kirillov_orbit}
and references therein.


\subsection{Scalar operators and observables} 
\label{sub:averaging_observables}

We are interested in more complex limit transitions than 
those
in \S \ref{sub:distribution_of_lozenges_on_a_horizontal_slice},
and for accessing them the following 
representation theoretic thinking is useful. 
Our probability weights \eqref{Prob_abc} 
arise as relative dimensions of the
isotypical
subspaces in the representation space for $\Ub(N)$. 
Moreover, these subspaces are blocks of identical irreducibles 
with respect to the action of the smaller group 
$\Ub(h)$.

\subsubsection{Locally scalar operators} 
\label{ssub:scalar_operators}

The problem of decomposing 
a representation on irreducible components is often
referred to as the \emph{problem of} 
(\emph{noncommutative}) \emph{harmonic analysis}.
It can be viewed as a noncommutative Fourier
transform
--- an analogue of 
the classical Fourier transform when $\R$ acts by shifts on $L^2(\R)$.
The ``best'' way to solve such a problem would be to find operators in the representation
space which project to a given 
isotypical component. For the classical Fourier transform, these operators
have the form
\begin{align*}
	f\mapsto \int_{-\infty}^{+\infty}e^{-\i xp}f(x)dx.
\end{align*}

For the action of the symmetric group, such operators
are known under the name \emph{Young symmetrizers},
they date back to the earliest days of representation
theory. However, even if one can construct such operators,
they are quite complicated. 
The ``next best'' thing is to find 
operators which are scalar in each irreducible representation 
(the projection operators 
take value $1$ in one irreducible representation, and $0$ in all other irreducible representations).
By a simple Schur's lemma, such operators are
exactly those that commute with the action of the group.


\subsubsection{Dilation operators} 
\label{ssub:dilations}

Observe that $\Ub(h)$ has a nontrivial center --- scalar matrices 
of the form $e^{\i\varphi}\cdot\mathbf{1}$, $\varphi\in\R$. Their action on elements
$\diag(z_1,\ldots,z_h)\in\Hb_h$ amounts to multiplying each $z_j$
by $e^{\i\varphi}$, and their action
on a vector of weight $(k_1,\ldots,k_h)\in\Z^{h}$ is the multiplication by
$e^{\i\varphi |k|}=e^{\i\varphi (k_1+\ldots+k_h)}$ (see \S \ref{sub:representations_of_unitary_groups}).
Hence, 
using the homogeneity of the Schur polynomials we see that
on an irreducible representation of $\Ub(h)$
with highest weight $\mu=(\mu_1\ge \ldots\ge \mu_h)$
such an operator acts as the scalar operator
$e^{\i\varphi|\mu|}\cdot\mathbf{1}$.

Let us now apply such an operator, viewed simply as 
the dilation operator $(\D_{\varphi}f)(z_1,\ldots,z_h)=f(e^{\i\varphi}z_1,\ldots,e^{\i\varphi}z_h)$,
to the decomposition identity \eqref{Prob_t} defining $\Prob_{t,h}\{\mu\}$:
\begin{align*}
	\prod_{j=1}^{h}e^{t(e^{\i\varphi}z_j-1)}=&
	\sum_{\mu_1\ge \ldots\ge \mu_h}e^{\i\varphi|\mu|}
	\Prob_{t,h}\{\mu\}\cdot \frac{s_\mu(z_1,\ldots,z_h)}{s_\mu(1,\ldots,1)}
\end{align*}
(clearly, $\D_\varphi s_\mu=e^{\i\varphi|\mu|}s_\mu$).
Setting $z_1=\ldots=z_h=1$ above, we get
\begin{align*}
	e^{ht(e^{\i\varphi}-1)}=&
	\sum_{\mu_1\ge \ldots\ge \mu_h}e^{\i\varphi|\mu|}
	\Prob_{t,h}\{\mu\}.
\end{align*}
This immediately tells us (``for free''), that $|\mu|$
has the Poisson distribution with parameter $ht$
because the left-hand side is the characteristic function
of that distribution.


\subsubsection{Quadratic Casimir--Laplace operator} 
\label{ssub:quadratic_casimir_laplace_operator}

Going further, the first nontrivial example of an operator
which commutes with the action of
$\Ub(h)$ is the so-called \emph{quadratic
Casimir--Laplace operator} $\mathscr{C}_2$. Its action on functions on $\Hb_h$
is given by 
\begin{align*}&
	(\mathscr{C}_2f)(z_1,\ldots,z_h)\\&\hspace{40pt}={\prod\limits_{1\le i<j\le h}(z_i-z_j)^{-1}}
	\sum_{r=1}^{h}\left(z_r \frac{\partial}{\partial z_r}\right)^{2}
	\prod_{1\le i<j\le h}(z_i-z_j) \; f(z_1,\ldots,z_h).
\end{align*}
Such operators
exist for all semi-simple Lie groups and are one of the 
basic representation-theoretic objects.
Also, 
\begin{align}\label{generator_of_Dyson_CM}
	\mathscr{C}_2-\sum_{j=1}^{h-1}j^2
\end{align}
is the (projection to eigenvalues of the) generator
of the Brownian motion on $\Ub(h)$.
In other words, \eqref{generator_of_Dyson_CM}
is the generator of the circular Dyson Brownian motion
\cite{Dyson1962_III}, \cite{dyson1962brownian}.
See also \S \ref{sub:dyson_brownian_motion} below
for a related Markov dynamics.

It is immediate to see (using the ratio of determinants formula \eqref{Schur} and the fact that 
$(z \frac{\partial}{\partial z})z^k=kz^k$)
that the action of the quadratic Casimir--Laplace operators on the Schur polynomials
is diagonal, and
\begin{align*}
	\mathscr{C}_2 s_\mu=\sum_{i=1}^{h}
	(\mu_i+h-i)^{2}s_\mu.
\end{align*}

We could now proceed with the application of $\mathscr{C}_2$ to \eqref{Prob_t}.
However, let us first note that the dilation operators
$\D_\varphi$ can be written in a form rather similar to $\mathscr{C}_2$:
\begin{align*}&
	(\D_\varphi f)(z_1,\ldots,z_h)\\&\hspace{15pt}={\prod\limits_{1\le i<j\le h}(z_i-z_j)^{-1}}\,
	e^{\i\varphi\left(\sum\limits_{r=1}^{h}z_r \frac{\partial}{\partial z_r}-\frac{h(h-1)}{2}\right)}
	\prod_{1\le i<j\le h}(z_i-z_j)\; f(z_1,\ldots,z_h).
\end{align*}
Indeed, the desired eigenrelation $\D_\varphi s_\mu=e^{\i\varphi|\mu|}s_\mu$
again follows from \eqref{Schur} and the fact that 
$e^{\i\varphi z \frac{\partial}{\partial z}}z^{k}=e^{\i\varphi k}z^{k}$.


\subsubsection{A $q$-deformation} 
\label{ssub:general_recipe}

Let us now note that we have a general recipe on our hands of
constructing operators which have Schur functions as their eigenfunctions. Namely, 
for any operator of the form
\begin{align}\label{Df_general}
	(\D f)(z_1,\ldots,z_h)={\prod\limits_{1\le i<j\le h}(z_i-z_j)^{-1}}
	\Big(\sum_{r=1}^{h}\D^{(z_r)}\Big)
	\prod_{1\le i<j\le h}(z_i-z_j)\; f(z_1,\ldots,z_h)
\end{align}
with 
\begin{align*}
	\D^{(z)}z^{k}=d_kz^k,
\end{align*}
we have
\begin{align}\label{Df_smu}
	\D s_\mu=\Big(\sum_{i=1}^{h}d_{\mu_i+h-i}\Big)s_\mu.
\end{align}

For example, we can take
\begin{align*}
	\D^{(z)}=\T_{q,z},\qquad
	(\T_{q,z}f)(z)=f(qz),\qquad d_k=q^{k},
\end{align*}
where $q\in\C$ is a parameter.

Then using \eqref{Df_general} we obtain a $q$-difference operator that can be rewritten in the form
\begin{align}\label{D1_q}
	\D^{(1)}=\sum_{i=1}^{h}\prod_{j\ne i}\frac{qz_i-z_j}{z_i-z_j}\T_{q,z_i},
\end{align}
and \eqref{Df_smu} gives
\begin{align}\label{D1_smu}
	\D s_\mu=\Big(\sum_{i=1}^{h}q^{\mu_i+h-i}\Big)s_\mu.
\end{align}



\subsection{Contour integrals and the density function} 
\label{sub:density_function_of_prob_t{mu}_}

We can now apply $\D^{(1)}$ \eqref{D1_q} to the identity \eqref{Prob_t} defining the measure $\Prob_{t,h}$.
This gives
\begin{align}\label{D1_identity}&
	e^{t\sum\limits_{r=1}^{h}(z_r-1)}
	\sum_{i=1}^{h}\prod_{j\ne i}\frac{qz_i-z_j}{z_i-z_j}\,
	e^{t(qz_i-z_i)}\\&\hspace{80pt}=\nonumber
	\sum_{\mu_1\ge \ldots\ge \mu_h}
	\Big(\sum_{r=1}^{h}q^{\mu_r+h-r}\Big)
	\Prob_{t,h}\{\mu\}\cdot \frac{s_\mu(z_1,\ldots,z_h)}{s_\mu(1,\ldots,1)}.
\end{align}
As before, we would like to substitute $z_1=\ldots=z_h=1$ in the above 
identity. However, observe that the left-hand side 
is not well-suited for that. A standard trick helps --- 
the left-hand side can be rewritten as a simple contour integral:
\begin{lemma}\label{lemma:contour_integral}
	Let $f\colon\C\to\C$ be a holomorphic function. Then
	\begin{align*}
		\sum_{r=1}^{h}\prod_{j\ne r}\frac{qz_r-z_j}{z_r-z_j}\cdot
		\frac{f(qz_r)}{f(z_r)}=
		\frac{1}{2\pi\i}\oint\limits_{\{w\}} \prod_{j=1}^{h}\frac{qw-z_j}{w-z_j}\frac{1}{qw-w}
		\frac{f(qw)}{f(w)}dw,
	\end{align*}
	where the integration contour goes around $z_1,\ldots,z_h$
	is the positive direction.
\end{lemma}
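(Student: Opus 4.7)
The plan is to prove the identity by a straightforward residue calculus argument: the right-hand side is a single contour integral whose only poles inside the contour are simple poles at $w=z_1,\dots,z_h$, and summing the residues there produces the left-hand side term by term.

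First I would specify the contour more carefully: since $f$ is holomorphic, we can take a contour that consists of small positively oriented circles around each $z_r$, chosen small enough that (i) $f$ does not vanish on or inside these circles, (ii) the circles do not enclose $0$ (so that the pole of $\frac{1}{qw-w}=\frac{1}{(q-1)w}$ at $w=0$ is excluded), and (iii) the points $z_r/q$ (zeros of $qw-z_r$) are excluded, although these are not actually singularities of the integrand so this is only cosmetic. Under these choices, inside the contour the only singularities of
\[
\Phi(w):=\prod_{j=1}^{h}\frac{qw-z_j}{w-z_j}\cdot\frac{1}{qw-w}\cdot\frac{f(qw)}{f(w)}
\]
are the simple poles at $w=z_r$ coming from the factor $\frac{1}{w-z_r}$.

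Next I would compute the residue at $w=z_r$. Factoring out the singular piece $\frac{1}{w-z_r}$, the remaining factors are regular at $w=z_r$, so
\[
\Res_{w=z_r}\Phi(w)=(qz_r-z_r)\prod_{j\ne r}\frac{qz_r-z_j}{z_r-z_j}\cdot\frac{1}{qz_r-z_r}\cdot\frac{f(qz_r)}{f(z_r)}=\prod_{j\ne r}\frac{qz_r-z_j}{z_r-z_j}\cdot\frac{f(qz_r)}{f(z_r)},
\]
where the $(qz_r-z_r)$ and $(qz_r-z_r)^{-1}$ cancel. Summing these residues over $r$ and applying the residue theorem yields exactly the left-hand side.

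The proof is essentially routine, so there is no real obstacle; the only point that requires a little care is making sure the contour can be chosen to avoid all unwanted singularities (the point $w=0$ from $(q-1)w$, and any zeros of $f$), which is why the statement implicitly assumes $f$ holomorphic and the $z_j$'s distinct and nonzero. If one wants to allow the $z_j$'s to collide one would either take a suitable limit in the identity or replace the simple poles by higher-order poles and redo the residue computation, but for the generic statement above this is not needed.
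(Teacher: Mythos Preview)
Your proof is correct and is exactly the standard residue computation one would expect; the paper in fact states this lemma without proof, treating it as an elementary residue-calculus identity, so your argument fills in precisely the routine details that were omitted.
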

Using the above lemma and setting $z_1=\ldots=z_h=1$,
we read from \eqref{D1_identity}:
\begin{align}\label{D1_integral}&
	\frac{1}{2\pi\i}\oint\limits_{|w-1|=\varepsilon}
	\left(\frac{qw-1}{w-1}\right)^{h}\frac{1}{(q-1)w}e^{t(q-1)w}dw
	\\&\hspace{90pt}\nonumber=
	\sum_{\mu_1\ge \ldots\ge \mu_h}
	\Big(\sum_{r=1}^{h}q^{\mu_r+h-r}\Big)
	\Prob_{t,h}\{\mu\}.
\end{align}
The quantity in the right-hand side of \eqref{D1_integral}
does not seem very probabilistic, but we can now use the 
arbitrariness of the parameter $q$.
For any $n\in\Z$, we can compare the coefficients of $q^{n}$
in both sides of \eqref{D1_integral}. This amounts to integrating the left-hand side again 
(with $dq/q^{n+1}$),
and thus yields:
\begin{theorem} 
\label{thm:tiling_density}
For any $t\ge0$ and $h=1,2,\ldots$,
\begin{align}\label{D1_integral_density}&
	\Prob_{t,h}\big\{n\in\{\mu_i+h-i\}_{i=1}^{h}\big\}
	\\&\hspace{80pt}\nonumber=
	\frac{1}{(2\pi\i)^{2}}
	\oint\limits_{|q|=\varepsilon} \frac{dq}{q^{n+1}}
	\oint\limits_{|w-1|=\varepsilon}
	\left(\frac{qw-1}{w-1}\right)^{h}\frac{e^{t(q-1)w}}{(q-1)w}dw.
\end{align}
\end{theorem}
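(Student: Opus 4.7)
The plan is to extract the coefficient of $q^n$ from both sides of the identity \eqref{D1_integral}, which has already been established in the text. The algebraic work is essentially done; what remains is to reinterpret the right-hand side as a generating function in $q$ and then perform the coefficient extraction via a second contour integration.

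First I would rewrite the right-hand side of \eqref{D1_integral} as a genuine power series in $q$. Since $\mu_1 \ge \cdots \ge \mu_h$ forces the shifted coordinates $\mu_i + h - i$ to be strictly decreasing --- hence $h$ distinct nonnegative integers --- one has
\[
\sum_{r=1}^{h} q^{\mu_r + h - r} \;=\; \sum_{n \ge 0} q^n \, \mathbf{1}\bigl[n \in \{\mu_i + h - i\}_{i=1}^{h}\bigr].
\]
Summing this against $\Prob_{t,h}\{\mu\}$ and interchanging the order of summation (legitimate for $|q|<1$ by dominated convergence, since every summand is bounded by $1$) converts the right-hand side of \eqref{D1_integral} into $\sum_{n \ge 0} q^n \, \Prob_{t,h}\bigl\{n \in \{\mu_i + h - i\}_{i=1}^{h}\bigr\}$.

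Next I would apply the residue operator $\frac{1}{2\pi\i}\oint_{|q|=\varepsilon}(\cdot)\,dq/q^{n+1}$ with $\varepsilon$ chosen small enough that the $q$-contour lies strictly inside the unit disk (inside the domain of convergence of the generating function above) and avoids the pole at $q=1$ of the left-hand integrand. On the right-hand side this picks out precisely $\Prob_{t,h}\{n \in \{\mu_i + h - i\}_{i=1}^{h}\}$. On the left, the integrand of \eqref{D1_integral} is jointly continuous on the product of compact contours $\{|q|=\varepsilon\} \times \{|w-1|=\varepsilon\}$, so Fubini's theorem permits exchanging the $q$- and $w$-integrals to yield the double contour integral stated in \eqref{D1_integral_density}.

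The only delicate point is the justification of the two interchanges above, both of which follow from routine uniform bounds on the compact contours involved. I anticipate no substantive obstacle: the representation-theoretic core of the identity --- namely the eigenrelation \eqref{D1_smu} for the operator $\D^{(1)}$ together with the residue reformulation provided by Lemma~\ref{lemma:contour_integral} --- is already in place, so the remainder of the proof is simply the coefficient extraction described above.
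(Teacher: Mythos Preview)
Your proposal is correct and follows exactly the approach sketched in the paper: compare coefficients of $q^n$ on both sides of \eqref{D1_integral} by integrating against $dq/q^{n+1}$. You have simply fleshed out the analytic justifications (dominated convergence for the generating-function rearrangement, Fubini for the order of integration) that the paper leaves implicit.
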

The left-hand side of \eqref{D1_integral_density} is a very
meaningful
probabilistic quantity --- it is the probability of seeing a vertical lozenge
at any given location on the horizontal slice (cf. Fig.~\ref{fig:slice_h}).
This is the so-called \emph{density function} of the measure 
$\Prob_{t,h}$. Furthermore, we see that the right-hand side of \eqref{D1_integral_density}
is well-suited for asymptotics.
We perform the asymptotic analysis in the next section.

\begin{remark}
	Theorem \ref{thm:tiling_density} is a special case of a more
	general formula that represents correlation functions of the so-called
	\emph{Schur measures} as multiple contour integrals. 
	See \cite{BorodinGorinSPB12} and references therein for details.
\end{remark}



\section{Asymptotics of tiling density via double contour integrals} 
\label{sec:asymptotics}

Here we perform an asymptotic analysis of the density function 
\eqref{D1_integral_density} of the measure 
$\Prob_{t,h}$ on the $h$th horizontal slice in the regime
\begin{align}\label{limit_regime}
	t=\tau L,\qquad
	n=\nu L,\qquad
	h=\eta L,\qquad L\to\infty,
\end{align}
where $n$ is the point of observation in \eqref{D1_integral_density},
which also must be scaled to yield nontrivial asymptotics.
The limit regime \eqref{limit_regime} is quite nontrivial
and is not achievable via elementary tools (in contrast with the limit transitions 
in \S \ref{sub:distribution_of_lozenges_on_a_horizontal_slice}).
The reader may want to peek at Figures \ref{fig:asymp_picture} and 
\ref{fig:sim_asymp_picture} below to see what type of description we
are aiming at.

Changing variables $q\mapsto v=wq$, $dq=dv/w$ in the left-hand side of \eqref{D1_integral_density}
gives
\begin{align*}&
	\Prob_{t,h}\big\{n\in\{\mu_i+h-i\}_{i=1}^{h}\big\}
	\\&\hspace{50pt}=\frac{1}{(2\pi\i)^{2}}
	\oint_{|v|=\varepsilon} 
	\oint_{|w-1|=\varepsilon}
	\frac{dv\cdot w^{-1}}{v^{n+1}w^{-n-1}}
	\left(\frac{v-1}{w-1}\right)^{h}
	\frac{e^{t(v-w)}dw}{v-w}
	\\&\hspace{50pt}=
	\frac{1}{(2\pi\i)^{2}}
	\oint\limits_{\Gamma_0} 
	\frac{dv}{v}
	\oint\limits_{\Gamma_1}
	dw \frac{e^{tv}(v-1)^{h}v^{-n}}
	{e^{tw}(w-1)^{h}w^{-n}}\frac{1}{v-w}.
\end{align*}
Here by $\Gamma_0$ and $\Gamma_1$ we have denoted small
positively oriented contours around $0$ and $1$, respectively.
Further analysis uses the original idea 
of Okounkov
\cite{Okounkov2002}
and largely follows 
\cite{BorFerr2008DF}.
We observe that
the integrand
above has the form
\begin{align*}
	\frac{e^{L(F(v)-F(w))}}{v(v-w)},\qquad \qquad
	F(z):=\tau z+\eta \ln(z-1)-\nu \ln z.
\end{align*}
If we manage to deform the contours
in such a way that $\Re\big(F(v)-F(w)\big)<0$
on them
except for possibly finite number of points
(where $\Re$ denotes the real part),
then our integral would asymptotically
vanish as $L\to\infty$. The deformation depends on the 
location of the 
critical points of $F(z)$, i.e.,
of the roots of the equation
\begin{align}\label{critical_points_equation}
	F'(z)=\frac{\tau z(z-1)+\eta z-\nu(z-1)}{z(z-1)}=0.
\end{align}
The discriminant of the numerator has the form
\begin{align}\label{discr}
	\mathrm{discr}=\big(\nu-(\sqrt\tau-\sqrt\eta)^2\big)\big(
	\nu-(\sqrt\tau+\sqrt\eta)^2\big).
\end{align}
We will now consider all possible cases one by one.

\medskip
\par\noindent
\textbf{Case 1. $\sqrt\nu>\sqrt\tau+\sqrt\eta$.} 
In this case both roots of \eqref{critical_points_equation}
are real and greater than $1$. The plot of $\Re\big(F(z)\big)$ looks as on 
Fig.~\ref{fig:nu_bigger} (top).\begin{figure}[htbp]
	\begin{center}
		\begin{tabular}{cc}
			\begin{tikzpicture}
				[scale=1.2, ultra thick]
				\draw[->] (0,0) -- (6.8,0);
				\draw[->] (0,0) -- (0,2.5);
				\draw[->] (0,0) -- (-2,-2);
				\draw[line width=.5, dashed] (0,0) -- (0,-2.5);
				\draw[line width=.5] (2,2.5) -- (2,-2.5);
				\draw[line width=.5] (4.95,-1.7) -- (4.95,0);
				\draw[line width=.5] (3.55,.9) -- (3.55,0);
				\draw[fill] (0,0) circle (.08) node[below right] {0};
				\draw[fill] (2,0) circle (.08) node[below right] {1};
				\draw[thick] plot [smooth, tension=.7] coordinates {(1.8,-2.5) (1.5,-1) (.6,1) (.3,2.5)};
				\draw[thick] plot [smooth, tension=1] coordinates { (-.3,2.5) (-.9,.4) (-2,-1)};
				\draw[fill] (4.95,-1.7) circle (.08) node[below] {$z_{\min}$};
				\draw[fill] (3.55,.9) circle (.08) node[above,yshift=4] {$z_{\max}$};
				\draw[thick] plot [smooth, tension=.8] coordinates { (2.2,-2.5) (3.5,.9) (5,-1.7) (6.8,2)};
				\draw [thick,domain=0:-180] plot ({-.02+.475*cos(\x)}, {1.5+.18*sin(\x)});
				\draw [thick,domain=0:180,dashed] plot ({-.02+.475*cos(\x)}, {1.5+.18*sin(\x)});
				\draw [thick,domain=0:-180] plot ({2.04+.534*cos(\x)}, {-1+.2*sin(\x)});
				\draw [thick,domain=0:180,dashed] plot ({2.04+.534*cos(\x)}, {-1+.2*sin(\x)});
				\draw [thick,domain=0:-180] plot ({3.529+.36*cos(\x)}, {.55+.11*sin(\x)});
				\draw [thick,domain=0:180,dashed] plot ({3.529+.36*cos(\x)}, {.55+.11*sin(\x)});
				\draw [thick,domain=0:-180] plot ({5.02+.73*cos(\x)}, {-.7+.21*sin(\x)});
				\draw [thick,domain=0:180,dashed] plot ({5.02+.73*cos(\x)}, {-.7+.21*sin(\x)});
			\end{tikzpicture}\\
			\rule{0pt}{144pt}
			\begin{tikzpicture}
				[scale=9]
				\def\x{0.008}
				\draw[->,thick] (-.36,0)--(.7,0);
				\draw[->,thick] (0,-.25)--(0,.25);
				\draw[fill] (0,0) circle(\x);\node [below left] at (0,0) {$0$};
				\draw[fill] (.3,0) circle(\x);\node [below] at (.3,0) {$1$};
			    \draw[ultra thick, decoration={markings,
				    mark=at position .1 with {\arrow{>}}}, 
				    postaction={decorate}]  
				    (.14,0) ellipse (.44 and .18)
				    node[yshift=38,xshift=90] {$v$};
				\draw[ultra thick, decoration={markings,
				    mark=at position .1 with {\arrow{>}}}, 
				    postaction={decorate}]  
				    (.3,0) ellipse (.1 and .08)
				    node[yshift=10,xshift=12] {$w$};
				\draw [fill] (.4,0) circle (\x) node[below right, xshift=-1] {$z_{\max}$};
				\draw [fill] (.58,0) circle (\x) node[below right, xshift=-1] {$z_{\min}$};
			\end{tikzpicture}
		\end{tabular}
	\end{center}
	\caption{Case 1. Plot of $\Re\big(F(z)\big)$ 
	(top), and the 
	deformed contours of integration (bottom).
	In this case
	the two critical points are real and $>1$.}
	\label{fig:nu_bigger}
\end{figure}
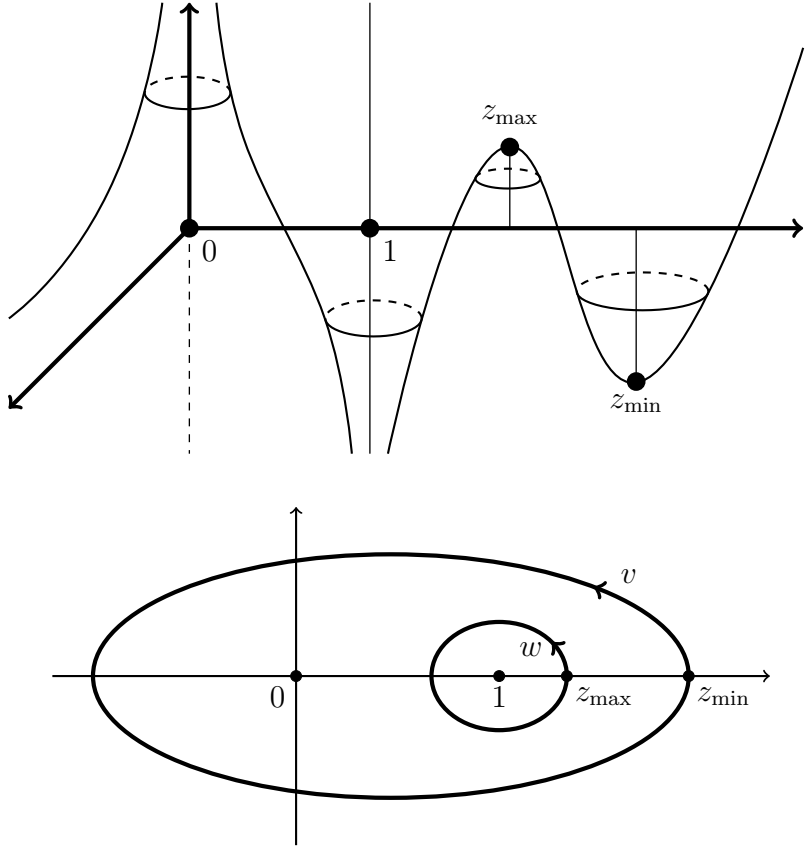
Moving the $v$ contour to the
level line $\Re\big(F(v)\big)=
\Re\big(F(z_{\min})\big)$
and 
the $w$ contour to the
level line $\Re\big(F(w)\big)=
\Re\big(F(z_{\max})\big)$,
we achieve that
$\Re\big(F(v)-F(w)\big)=F(z_{\min})-F(z_{\max})<0$,
which implies the desired vanishing. 
However, 
in the process of deformation,
the $v$ contour, which was originally a small circle around the origin,
has swallowed the $w$ contour, see Fig.~\ref{fig:nu_bigger} (bottom). 
Because of 
$(v-w)^{-1}$ in the integrand, we have to compensate 
the result of moving the contours by subtracting the residue
\begin{align*}
	-\frac{1}{2\pi\i}
	\oint_{\Gamma_1}dw
	\Res\limits_{v=w}\frac{1}{v}
	\frac{e^{L(F(v)-F(w))}}{v-w}
	=-\frac{1}{2\pi\i}
	\oint_{\Gamma_1}\frac{dw}{w}=0.
\end{align*}
Thus, we see that for $\sqrt\nu>\sqrt\tau+\sqrt\eta$,
the density of vertical lozenges asymptotically
vanishes.

\medskip
\par\noindent
\textbf{Case 2. $|\sqrt\tau-\sqrt\eta|<\sqrt\nu<\sqrt\tau+\sqrt\eta$.} 
In this case,
two critical points --- solutions of \eqref{critical_points_equation}
--- are complex conjugate. 
Consider the contour plot of 
$\Re\big(F(z)-F(z_c)\big)$, where we have shifted $F(z)$
by the value of $F$ at the upper critical point,
$F'(z_c)=0$, $\Im(z_c)>0$ ($\Im$ denotes the imaginary part).
This contour plot looks like Fig.~\ref{fig:contour_plot} (left).\begin{figure}[htbp]
	\begin{center}
		\begin{tabular}{cc}
			\includegraphics[height=0.35\textwidth]
			{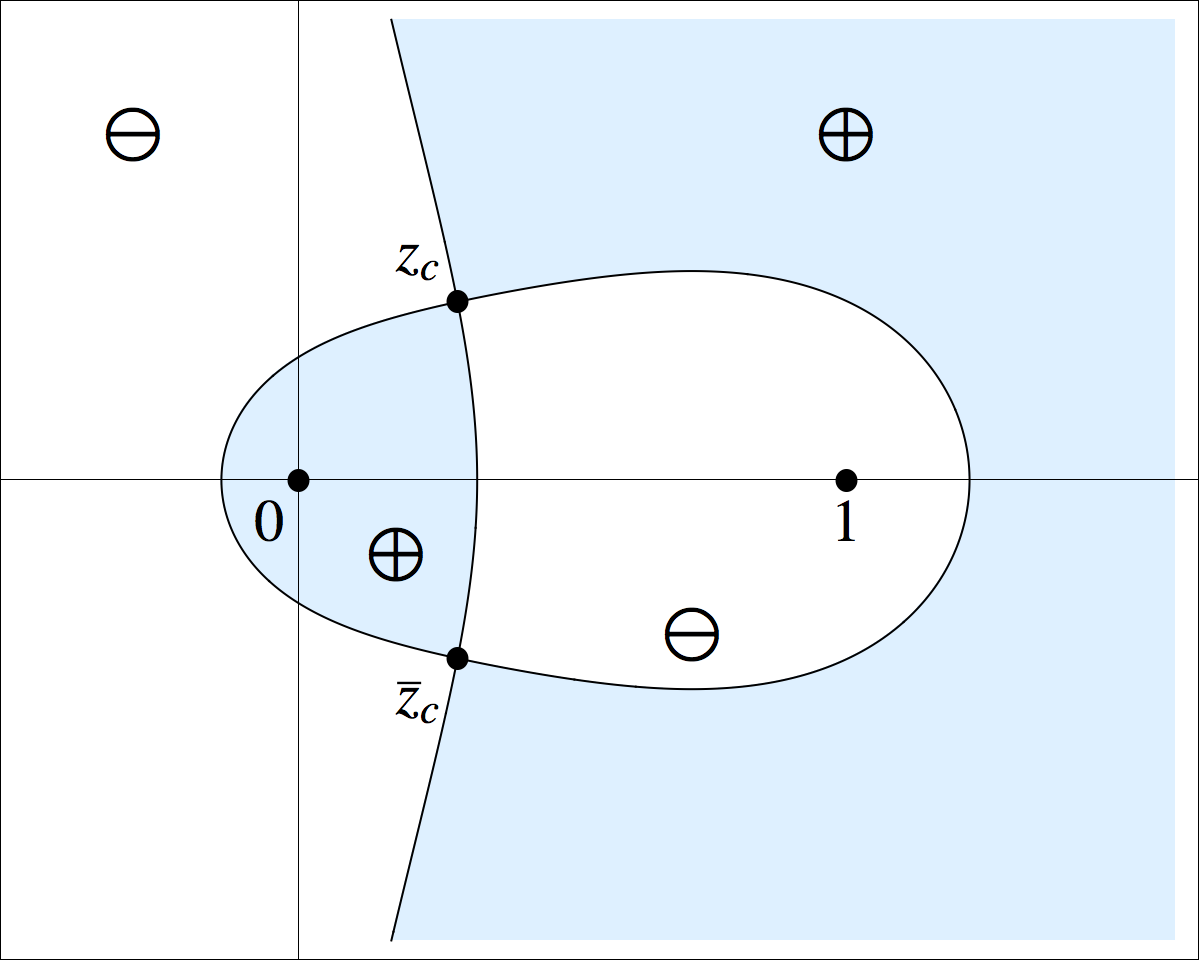}&	
			\hspace{10pt}
			\includegraphics[height=0.35\textwidth]{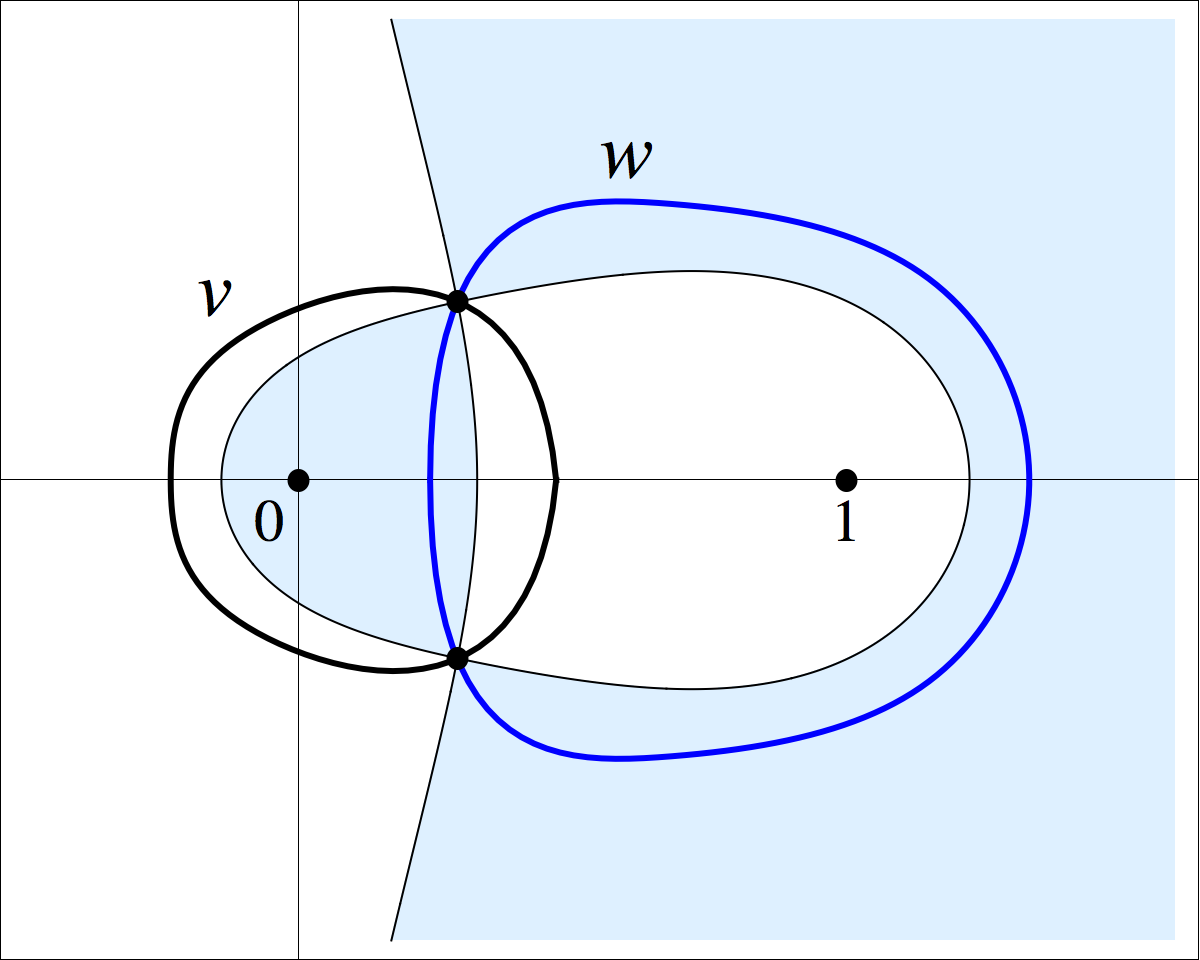}
		\end{tabular}
	\end{center}
	\caption{Case 2. Contour plot of $\Re\big(F(z)-F(z_c)\big)$ (left),
	and the deformed contours (right).}
	\label{fig:contour_plot}
\end{figure}
Deforming the $w$ contour into the 
region where $\Re\big(F(w)\big)$
is greater than 
$\Re\big(F(z_c)\big)$, and 
the $v$ contour into the region 
where $\Re\big(F(v)\big)$ is less than 
$\Re\big(F(z_c)\big)$, we again achieve
that $\Re\big(F(v)-F(w)\big)<0$ on the deformed contours.
However, in the process of deformation, we pick up the residue 
\begin{align*}
	-\frac{1}{2\pi\i}
	\int_{\bar z_c}^{z_c}
	dw
	\Res\limits_{v=w}\frac{1}{v}
	\frac{e^{L(F(v)-F(w))}}{v-w}
	=\frac{1}{2\pi\i}
	\int_{\bar z_c}^{z_c}
	\frac{dw}{w}=\frac{\arg(z_c)}{\pi},
\end{align*}
which is the limiting 
density function for vertical lozenges in this regime.

\medskip
\par\noindent
\textbf{Case 3. $0<\sqrt\nu<|\sqrt\tau-\sqrt\eta|$.} 
This final case contains two subcases
depending on whether $\sqrt\tau>\sqrt\eta$
or $\sqrt\tau<\sqrt\eta$.

In the first one, the plot of $\Re\big(F(z)\big)$
looks as on Fig.~\ref{fig:case3} (upper).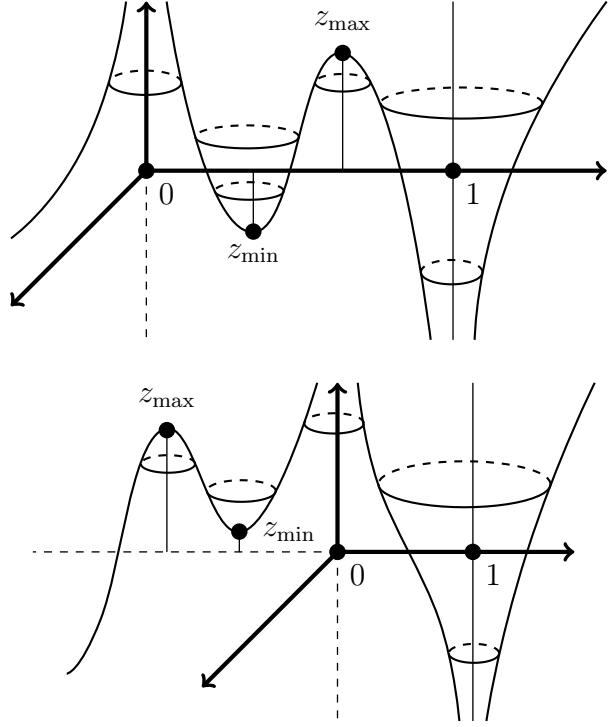
\begin{figure}[htbp]
	\begin{center}
		\begin{tabular}{c}
			\begin{tikzpicture}
				[scale=.9, ultra thick]
				\draw[->] (0,0) -- (6.8,0);
				\draw[->] (0,0) -- (0,2.5);
				\draw[->] (0,0) -- (-2,-2);
				\draw[line width=.5, dashed] (0,0) -- (0,-2.5);
				\draw[line width=.5] (4.53,2.5) -- (4.53,-2.5);
				\draw[line width=.5](2.9,1.74) -- (2.9,0);
				\draw[line width=.5] (1.58,-.9) -- (1.58,0);
				\draw[fill] (0,0) circle (.09) node[below right] {0};
				\draw[fill] (4.53,0) circle (.09) node[below right] {1};
				\draw[fill] (1.58,-.9) circle (.09) node[below] {$z_{\min}$};
				\draw[fill] (2.9,1.74) circle (.09) node[above,yshift=4] {$z_{\max}$};
				\draw[thick] plot [smooth, tension=1] coordinates { (-.3,2.5) (-.9,.4) (-2,-1)};
				\draw[thick] plot [smooth, tension=1] coordinates { (.3,2.5) (1.5,-.9) (3,1.7) (4.2,-2.5)};
				\draw[thick] plot [smooth, tension=1] coordinates {(4.85,-2.5) (5.4,0) (6.8,2.5)};
				\draw [thick,domain=0:180,dashed] plot ({4.51+.455*cos(\x)}, {-1.5+.18*sin(\x)});
				\draw [thick,domain=0:-180] plot ({4.51+.455*cos(\x)}, {-1.5+.18*sin(\x)});
				\draw [thick,domain=0:-180] plot ({4.655+1.195*cos(\x)}, {1+.23*sin(\x)});
				\draw [thick,domain=0:180,dashed] plot ({4.655+1.195*cos(\x)}, {1+.23*sin(\x)});
				\draw [thick,domain=0:-180] plot ({2.9+.41*cos(\x)}, {1.3+.13*sin(\x)});	
				\draw [thick,domain=0:180,dashed] plot ({2.9+.41*cos(\x)}, {1.3+.13*sin(\x)});	
				\draw [thick,domain=0:-180] plot ({1.488+.765*cos(\x)}, {.5+.17*sin(\x)});	
				\draw [thick,domain=0:180,dashed] plot ({1.488+.765*cos(\x)}, {.5+.17*sin(\x)});	
				\draw [thick,domain=0:-180] plot ({1.52+.51*cos(\x)}, {-.3+.13*sin(\x)});	
				\draw [thick,domain=0:180,dashed] plot ({1.52+.51*cos(\x)}, {-.3+.13*sin(\x)});	
				\draw [thick,domain=0:-180] plot ({-.017+.535*cos(\x)}, {1.3+.18*sin(\x)});	
				\draw [thick,domain=0:180,dashed] plot ({-.017+.535*cos(\x)}, {1.3+.18*sin(\x)});	
			\end{tikzpicture}
			\vspace{10pt}\\
			\begin{tikzpicture}
				[scale=.9, ultra thick]
				\draw[->] (0,0) -- (0,2.5);
				\draw[->] (0,0) -- (-2,-2);
				\draw[line width=.5, dashed] (0,0) -- (0,-2.5);
				\draw[->] (0,0) -- (3.5,0);
				\draw[line width=.5, dashed] (0,0) -- (-4.5,0);
				\draw[line width=.5] (2,2.5) -- (2,-2.5);
				\draw[line width=.5] (-1.45,.3) -- (-1.45,0);
				\draw[line width=.5] (-2.52,1.8) -- (-2.52,0);
				\draw[fill] (0,0) circle (.09) node[below right] {0};
				\draw[fill] (2,0) circle (.09) node[below right] {1};
				\draw[fill] (-1.45,.3) circle (.09) node[right, xshift=4] {$z_{\min}$};
				\draw[fill] (-2.52,1.8) circle (.09) node[above,yshift=4] {$z_{\max}$};
				\draw[thick] plot [smooth, tension=1] coordinates {(2.2,-2.5) (2.8,0) (3.8,2.5)};
				\draw[thick] plot [smooth, tension=.7] coordinates {(1.8,-2.5) (1.5,-1) (.6,1) (.3,2.5)};
				\draw[thick] plot [smooth, tension=.8] coordinates {(-.3,2.5) (-1.4,.3) (-2.6,1.8) (-3.5, -1) (-4,-1.8)};
				\draw [thick,domain=0:-180] plot ({2.0155+.372*cos(\x)}, {-1.5+.14*sin(\x)});
				\draw [thick,domain=0:180,dashed] plot ({2.0155+.372*cos(\x)}, {-1.5+.14*sin(\x)});
				\draw [thick,domain=0:-180] plot ({1.875+1.272*cos(\x)}, {1+.34*sin(\x)});
				\draw [thick,domain=0:180,dashed] plot ({1.875+1.272*cos(\x)}, {1+.34*sin(\x)});
				\draw [thick,domain=0:-180] plot ({-1.421+.51*cos(\x)}, {.9+.16*sin(\x)});
				\draw [thick,domain=0:180, dashed] plot ({-1.421+.51*cos(\x)}, {.9+.16*sin(\x)});
				\draw [thick,domain=0:-180] plot ({-2.504+.4*cos(\x)}, {1.3+.13*sin(\x)});
				\draw [thick,domain=0:180,dashed] plot ({-2.504+.4*cos(\x)}, {1.3+.13*sin(\x)});
				\draw [thick,domain=0:-180] plot ({-.05+.44*cos(\x)}, {1.9+.15*sin(\x)});
				\draw [thick,domain=0:180,dashed] plot ({-.05+.44*cos(\x)}, {1.9+.15*sin(\x)});
			\end{tikzpicture}
		\end{tabular}
	\end{center}
	\caption{Case 3. Plots of $\Re\big(F(z)\big)$ for 
	two subcases, $\tau>\eta$ (upper) and 
	$\tau<\eta$ (lower).}
	\label{fig:case3}
\end{figure}
Deforming the integration contours 
to level lines (similarly to what was done before in
Case 1)
requires no residue-picking. Thus, the limiting density 
is zero for the subcase $\sqrt\tau>\sqrt\eta$.

In the second subcase, the picture is slightly different, 
see Fig.~\ref{fig:case3} (lower). 
The familiar deformation of the contours
to the level lines now requires 
that the $w$ contour swallows the $v$ contour
(see Fig.~\ref{fig:case3_contours}).\begin{figure}[htbp]
	\begin{center}
		\begin{tabular}{cc}
			\begin{tikzpicture}
				[scale=9]
				\def\x{0.008}
				\draw[->,thick] (-.56,0)--(.56,0);
				\draw[->,thick] (0,-.25)--(0,.25);
				\draw[fill] (0,0) circle(\x);\node [below left] at (0,0) {$0$};
				\draw[fill] (.3,0) circle(\x);\node [below] at (.3,0) {$1$};
			    \draw[ultra thick, decoration={markings,
				    mark=at position .1 with {\arrow{>}}}, 
				    postaction={decorate}]  
				    (0,0) ellipse (.44 and .18)
				    node[yshift=38,xshift=90] {$w$};
				\draw[ultra thick, decoration={markings,
				    mark=at position .1 with {\arrow{>}}}, 
				    postaction={decorate}]  
				    (0,0) ellipse (.24 and .1)
				    node[yshift=18,xshift=20] {$v$};
				\draw [fill] (-.44,0) circle (\x) node[below left, xshift=3] {$z_{\max}$};
				\draw [fill] (-.24,0) circle (\x) node[below left, xshift=3] {$z_{\min}$};
			\end{tikzpicture}
		\end{tabular}
	\end{center}
	\caption{Case 3. Deformed contours when
	$\tau<\eta$.}
	\label{fig:case3_contours}
\end{figure}
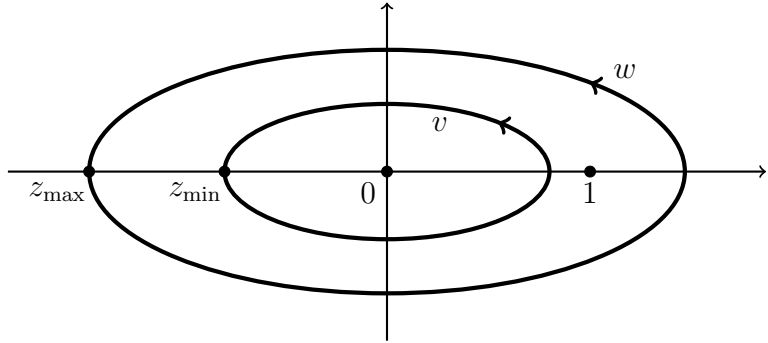
This results in the extra residue
\begin{align*}
	-\frac{1}{2\pi\i}
	\oint_{\Gamma_0}\frac{dv}{v}
	\Res_{w=v}
	\frac{e^{L(F(v)-F(w))}}{v-w}
	=\frac{1}{2\pi\i}
	\oint_{\Gamma_0}\frac{dv}{v}=1.
\end{align*}
Thus, the limiting density is $1$ in this case.

\medskip

Summarizing, we see that 
the asymptotic density of the vertical
lozenges is nontrivial 
for each given $\tau$, inside the 
parabola $\mathrm{discr}=0$
\eqref{discr} in the $(\nu,\eta)$-plane.
Outside of this parabola, the density of the vertical
lozenges either vanishes or tends to $1$, 
signaling the \emph{frozen parts} (\emph{facets})
of the limit shape, see Figures~\ref{fig:asymp_picture}
and~\ref{fig:sim_asymp_picture}.
\begin{figure}[htbp]
	\begin{center}
		\begin{tabular}{cc}
		\begin{tikzpicture}
			[scale=.75, ultra thick,
			block/.style ={rectangle, thick, draw=black, text width=13em,
			align=center, rounded corners, minimum height=2em}]
			\def\rt{0.866025}
			\draw[->] (-1,2*\rt)--(0,0) -- (11,0) node[below, xshift=3] {$\nu$};
			\draw[->] (0,0) -- (-4.1,8.2*\rt) node[left] {$\eta$};
			\def\sml{.3}
			\def\vlbx{-3.5-\sml/2}
			\def\vlby{7}
			\foreach \vl in {
			(\vlbx,\vlby*\rt),
			(\vlbx+\sml/2,\vlby*\rt-\sml*\rt),
			(\vlbx+\sml,\vlby*\rt-2*\sml*\rt),
			(\vlbx-\sml/2,\vlby*\rt+\sml*\rt),
			(\vlbx+\sml/2,\vlby*\rt+\sml*\rt),
			(\vlbx-\sml,\vlby*\rt+2*\sml*\rt),
			(\vlbx+0*\sml,\vlby*\rt+2*\sml*\rt),
			(3.6,\rt*\sml),(3.6-7/2*\sml,2*\rt*\sml),(3.6+3/2*\sml,2*\rt*\sml),
			(3.6-5*\sml,3*\rt*\sml),
			(3.6-1*\sml,3*\rt*\sml),(3.6+4*\sml,3*\rt*\sml)}
			{
			\begin{scope}[shift=\vl,scale=\sml]
				\draw [thick] (0,0) -- (.5,\rt) -- (1,0) -- (.5,-\rt) -- cycle;
			\end{scope}}
			\def\rlbx{7}
			\def\rlby{0}
			\foreach \rl in {
			(\rlbx-\sml,\rlby),(\rlbx,\rlby),(\rlbx+\sml,\rlby),
			(\rlbx+2*\sml,\rlby),(\rlbx+3*\sml,\rlby),
			(\rlbx+4*\sml,\rlby),(\rlbx+5*\sml,\rlby),
			(\rlbx+6*\sml,\rlby),(\rlbx+\sml/2,\rlby+\rt*\sml),
			(\rlbx+3*\sml/2,\rlby+\rt*\sml),(\rlbx+5*\sml/2,\rlby+\rt*\sml),
			(\rlbx+7*\sml/2,\rlby+\rt*\sml),(\rlbx+9*\sml/2,\rlby+\rt*\sml),
			(\rlbx+11*\sml/2,\rlby+\rt*\sml),(\rlbx+13*\sml/2,\rlby+\rt*\sml)}
			{
			\begin{scope}[shift=\rl,scale=\sml]
				\draw [thick] 
				(0,0) -- (.5,\rt) -- (1.5,\rt) -- (1,0) -- cycle;
			\end{scope}}
			\def\llbx{-\sml/2}
			\def\llby{0}
			\foreach \ll in {
			(\llbx,\llby),
			(\llbx+\sml,\llby),
			(\llbx+2*\sml,\llby),
			(\llbx+\sml/2,\llby+\rt*\sml),
			(\llbx-\sml/2,\llby+\rt*\sml),
			(\llbx-\sml,\llby+2*\rt*\sml),
			(\llbx-0*\sml,\llby+2*\rt*\sml)}
			{
			\begin{scope}[shift=\ll,scale=\sml]
				\draw [thick] (0,0) -- (-.5,\rt) -- (.5,\rt) -- (1,0) -- cycle;
			\end{scope}}
			
			\draw[ultra thick]
			(-3.20175, 
			7.7841) --(-3.14443, 7.3984) --(-3.08146, 7.0225) --(-3.01282, 
			6.6564) --(-2.93853, 6.3001) --(-2.85859, 5.9536) --(-2.77298, 
			5.6169) --(-2.68172, 5.29) --(-2.58479, 4.9729) --(-2.48221, 
			4.6656) --(-2.37397, 4.3681) --(-2.26008, 4.0804) --(-2.14052, 
			3.8025) --(-2.01531, 3.5344) --(-1.88444, 3.2761) --(-1.74791, 
			3.0276) --(-1.60573, 2.7889) --(-1.45788, 2.56) --(-1.30438, 
			2.3409) --(-1.14522, 2.1316) --(-0.9804, 1.9321) --(-0.809923, 
			1.7424) --(-0.633789, 1.5625) --(-0.451996, 1.3924) --(-0.264545, 
			1.2321) --(-0.0714363, 1.0816) --(0.12733, 0.9409) --(0.331755, 
			0.81) --(0.541838, 0.6889) --(0.757579, 0.5776) --(0.978978, 
			0.4761) --(1.20604, 0.3844) --(1.43875, 0.3025) --(1.67712, 
			0.2304) --(1.92115, 0.1681) --(2.17084, 0.1156) --(2.42619, 
			0.0729) --(2.6872, 0.04) --(2.95386, 0.0169) --(3.22618, 
			0.0036) --(3.50416, 0.0001) --(3.7878, 0.0064) --(4.07709, 
			0.0225) --(4.37205, 0.0484) --(4.67266, 0.0841) --(4.97893, 
			0.1296) --(5.29085, 0.1849) --(5.60844, 0.25) --(5.93168, 
			0.3249) --(6.26058, 0.4096) --(6.59514, 0.5041) --(6.93536, 
			0.6084) --(7.28124, 0.7225) --(7.63277, 0.8464) --(7.98996, 
			0.9801) --(8.35281, 1.1236) --(8.72132, 1.2769) --(9.09549, 
			1.44)
			--
			(9.47531, 1.6129)-- (9.86079, 1.7956)--(10.2519, 1.9881)--(10.6487, 
			2.1904);

			\draw [fill] (-2*\rt,3) circle (0.08) 
			node [below left] {$\eta=\tau$};
			\draw [fill] (4*\rt,0) circle (0.08)
			node [below, yshift=-4,xshift=5] {$\nu=\tau$};
			\draw (6,5) node[block] (discr) {$\mathrm{discr}=
			4\eta\tau-\big(\eta+\tau-\nu\big)^{2}=0$};
			\draw[->, thick] (discr.south) -- (8,1.09);
			\draw[->, thick] (discr.west) -- (-2.66,5.5);
			\node at (2,2.3) {$\rho=\dfrac{\arg(z_c)}{\pi}$};
			\node at (10,1) {$\rho=0$};
			\node at (-1,-.7) {$\rho=0$};
			\draw[->,thick] (-1.2,-.4) -- (-.05,.7);
			\node at (-4,4.8) {$\rho=1$};
			\draw[->,thick] (-4.3,5.15) -- (-3.15,6.15);
		\end{tikzpicture}
		\end{tabular}
	\end{center}
	\caption{Limiting density of the vertical lozenges
	in the $(\nu,\eta)$-plane.}
	\label{fig:asymp_picture}
\end{figure}
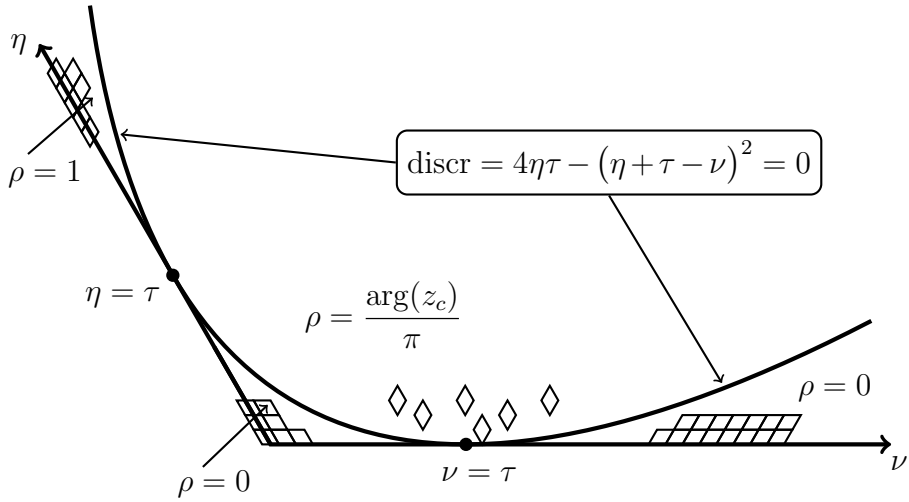

\begin{figure}[htbp]
	\begin{center}
		\begin{tabular}{cc}
			\includegraphics[width=0.75\textwidth]{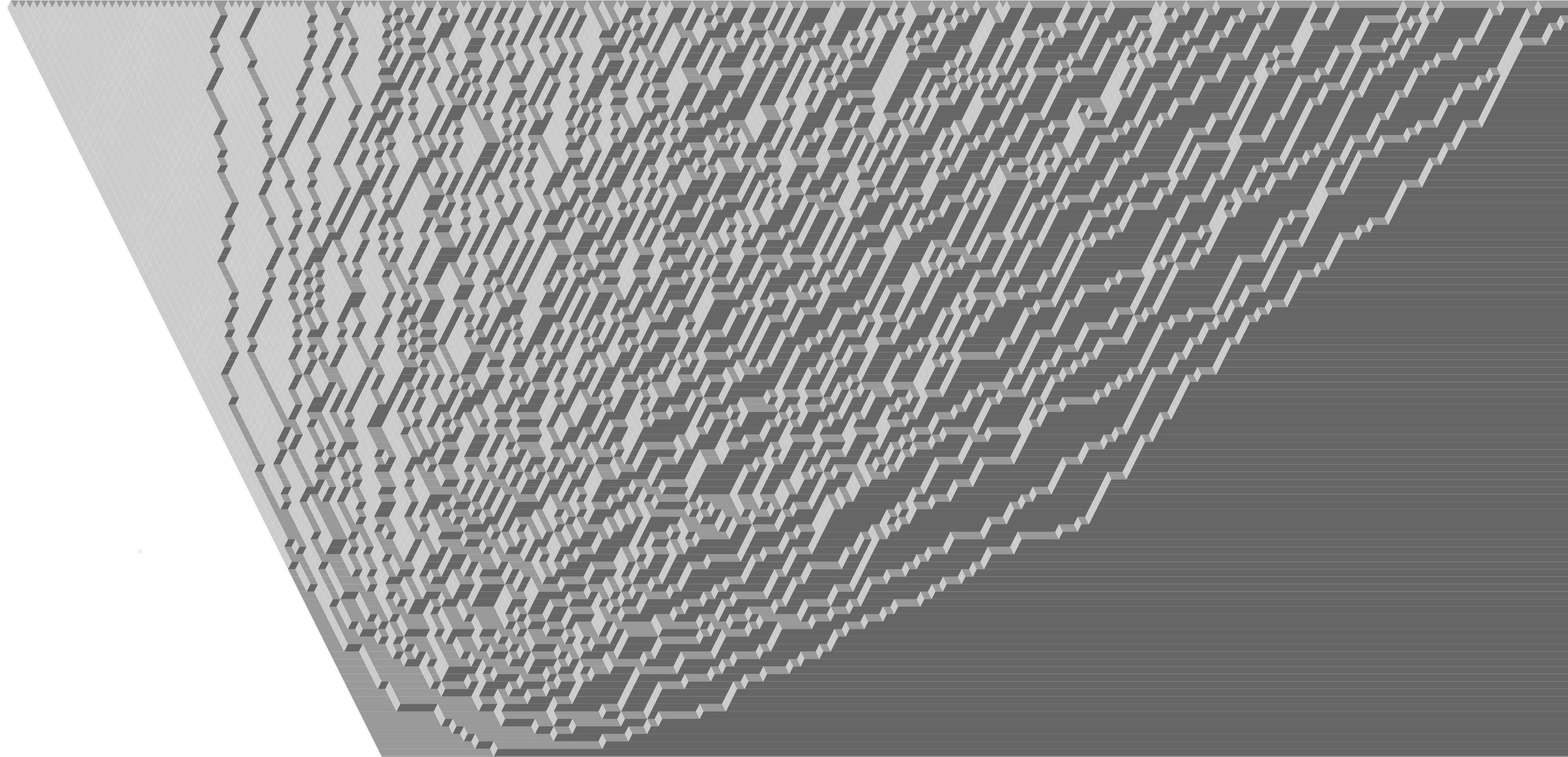}
		\end{tabular}
	\end{center}
	\caption{Simulation of the limiting distribution of lozenges.
	See also \cite{Ferrari_anim}.}
	\label{fig:sim_asymp_picture}
\end{figure}

In a similar way, using products of operators
$\D^{(1)}$ with different values of $q$, 
one can extract integral representations
for higher \emph{correlation functions} of 
vertical lozenges 
(i.e., probabilities that
a given set of locations is occupied by vertical lozenges).
Those integral representations
can be analyzed exactly in the same fashion 
as above, this was done in
\cite{BorFerr2008DF}. Indeed, 
if one knows
(here $h$ is fixed, but one can also handle 
different $h$'s)
\begin{align*}
	\sum_{\mu_1\ge \ldots\ge \mu_h}
	\Big(\sum_{i=1}^{h}q_1^{\mu_i+h-i}\Big)
	\ldots
	\Big(\sum_{i=1}^{h}q_s^{\mu_i+h-i}\Big)
	\Prob_{t,h}\{\mu\}
\end{align*}
for any $q_1,\ldots,q_s\in\C$, 
one can extract the order $s$
correlation function by 
looking at coefficients of 
monomials $q_1^{n_1}q_2^{n_2}\ldots q_{s}^{n_s}$.
The result reproduces known formulas for the correlation
functions of the so-called \emph{Schur processes}, e.g., 
see \cite{BorodinGorinSPB12} and references therein.

It should also be possible to carry out a similar 
program
for the case of the growing 
hexagon with sides $a,b,c,a,b,c$
when the 
triple ratio $a:b:c$
remains constant. This would require
analyzing the asymptotics
of ratios 
of the form
\begin{align*}
	\frac{s_\la(q_1,\ldots,q_s,1,\ldots,1)}
	{s_\la(1,1,\ldots,1)}
\end{align*}
with growing $\la$ as in \eqref{la_for_hexagon},
which can probably be done via 
recently developed techniques of~\cite{GorinPanova2012}.

In a different way, integral representations
for the correlation functions
in the hexagon were recently obtained
and asymptotically analyzed in 
\cite{Petrov2012}, \cite{Petrov2012GFF}, \cite{Petrov2012GT}.


\section{Markov dynamics} 
\label{sec:markov_dynamics}

Our next goal is to add an extra dimension
to our probabilistic models by introducing
suitable Markov evolutions
on them. This is not obvious and requires preliminary work.

\subsection{Dyson Brownian motion and its discrete counterparts} 
\label{sub:dyson_brownian_motion}

A hint at the existence of a nontrivial Markov dynamics
comes from the relation to random
matrices mentioned before (in particular, see 
the third limit regime in 
\S \ref{sub:distribution_of_lozenges_on_a_horizontal_slice}).
Indeed, a GUE matrix of size $N\times N$
has density with respect to the Lebesgue 
measure on the linear space 
$\mathrm{Herm}(N)$
of Hermitian $N\times N$ matrices
given by 
\begin{align*}
	\mathrm{M}(dX)=
	e^{-\Tr(X^2)/2}dX=
	\prod_{i=1}^{N}e^{-x_{ii}^{2}/2}
	\prod_{1\le i<j\le N}
	e^{-(\Re x_{ij})^{2}}
	e^{-(\Im x_{ij})^{2}}dX,
\end{align*}
where $X=[x_{ij}]_{i,j=1}^{N}$.
Thus, the $N^2$ quantities
\begin{align}\label{xij_elements}
	(x_{ii},\sqrt 2\cdot \Re x_{ij},\sqrt2\cdot \Im x_{ij})
\end{align}
are independent identically distributed
standard normal random variables.
Following Dyson \cite{dyson1962brownian}, one can replace these 
variables by standard Brownian motions. 
A nontrivial computation
shows that the corresponding Markov process
on Hermitian matrices
projects to a \emph{Markov}
process
on the spectra of matrices.
The generator of the process on 
the spectra is given by 
(here $\mathrm{Spec}(X)=(x_1,\ldots,x_N)$):
\begin{align}\label{Dyson_BM}&
	\frac{1}{2}\sum_{i=1}^{N}
	\frac{\partial^{2}}{\partial x_i^{2}}
	+\sum_{i=1}^{N}\Big(\sum_{j\ne i}
	\frac{1}{x_i-x_j}
	\Big)
	\frac{\partial}{\partial x_i}
	\\&\hspace{70pt}\nonumber={\prod\limits_{1\le i<j\le N}(x_i-x_j)^{-1}}
	\circ
	\frac{1}{2}\sum_{i=1}^{N}\frac{\partial^{2}}{\partial x_i^{2}}
	\circ
	\prod\limits_{1\le i<j\le N}(x_i-x_j).
\end{align}
Here on the right, $\circ$
means composition of operators:
First, we multiply by the Vandermonde determinant
$\prod_{i<j}(x_i-x_j)$, then 
apply the Laplacian, and after that 
divide by the Vandermonde determinant, similarly to 
\eqref{Df_general} above.
The projection of the (random) matrix
$X(t)\in\mathrm{Herm}(N)$ 
(evolving according to standard Brownian motions
of its elements \eqref{xij_elements})
to the spectrum 
then has the distribution density 
(see for example \cite{AndersonGuionnetZeitouniBook})
\begin{align*}
	\mathrm{const}\cdot \prod_{1\le <j\le N}
	(x_i-x_j)^{2}\prod_{i=1}^{N}e^{-x_i^{2}/2t}.
\end{align*}

The dynamics with generator \eqref{Dyson_BM} 
(called the \emph{Dyson Brownian motion})
can be easily mimicked for all the 
ensembles of the form 
$\mathrm{const}\cdot \prod_{i<j}(x_i-x_j)^{2}\prod_{i}w(x_i)$
considered in 
\S \ref{sub:distribution_of_lozenges_on_a_horizontal_slice}.
Let us focus on the Poisson
($ab/c\to t$) case, when $w(x)=t^{x}/x!$, $x\in\Z_{\ge0}$.
Consider a Markov jump process
with generator
\begin{align}\label{Poisson_generator}
	L^{(N)}_{\mathrm{Poisson}}
	={\prod\limits_{1\le i<j\le N}(x_i-x_j)^{-1}}
	\circ
	\sum_{i=1}^{N}
	\nabla_{i}
	\circ
	\prod\limits_{1\le i<j\le N}(x_i-x_j),
\end{align}
where $(\nabla f)(x)=f(x+1)-f(x)$
is the generator of the standard
Poisson process, and $\nabla_i$
acts as $\nabla$
on the $i$th coordinate.
One 
easily checks that the measures with 
$w(x)=t^{x}/x!$
are generated 
by the above Markov process
started from the initial condition
$(x_1,\ldots,x_N)=(N-1,N-2,\ldots,1,0)$.

The process with generator \eqref{Poisson_generator}
can be obtained by conditioning 
independent Poisson processes not to 
intersect until 
time $+\infty$, and also to grow at the same rate:
\begin{align*}
	\lim_{t\to\infty}\frac{x_1}{t}=\ldots=
	\lim_{t\to\infty}\frac{x_N}{t}.
\end{align*}
(Different growth rates
of different $x_i$'s
will result in conjugating 
$\sum_{i=1}^{N}\nabla_{i}$ by a different function, 
cf. 
\cite{konig2002non},
\cite{BG2011non}.)
This is similar to the
stationary version of the
Dyson Brownian
motion being obtained
from independent one-dimensional 
standard Brownian motions
by conditioning on the event that 
they never intersect,
and, moreover, stay 
within the distance $o(\sqrt{\mathrm{time}})$
from the origin as time goes to plus or minus infinity.


\subsection{Gibbs property and stochastic links} 
\label{sub:gibbs_property}

There is also another 
``perpendicular'' Markovian structure 
on the measures from 
\S \ref{sub:distribution_of_lozenges_on_a_horizontal_slice}.
Observe that the uniform measure
on lozenge tilings has the following
property:
If we pick a domain inside the hexagon, then 
fixing the boundary lozenge configuration 
induces the uniform measure
on tilings of the interior.
This seemingly trivial observation
becomes useful 
when the hexagon becomes infinitely
large in some way 
(as in \S \ref{sub:distribution_of_lozenges_on_a_horizontal_slice}).
Then the global uniform measure
makes no sense, but this property
survives. We will refer to it as to the 
\emph{Gibbs property}.

In particular, fixing $h$
vertical lozenges on the horizontal slice
of height $h$ (as on Fig.~\ref{fig:slice_h})
induces the uniform
measure on the set of all
configurations of lozenges
between this slice and the lower border (height zero). 
Thus, given locations 
$x^{(h)}=(x^{(h)}_{1},\ldots,x^{(h)}_{h})$
of the vertical lozenges 
on the $h$-th slice, the distribution of 
$h-1$ vertical lozenges at height $h-1$
is given by the ratio
(assuming that $x^{(h-1)}$ interlaces $x^{(h)}$)
\begin{align}
	&\nonumber
	\Prob\big\{x^{(h-1)}\,|\,x^{(h)}
	\big\}
	\\&\hspace{10pt}=\frac{\text{\# of GT schemes of depth $h-1$ with top 
	row $x^{(h-1)}$}}
	{\text{\# of GT schemes of depth $h$ with top 
	row $x^{(h)}$}}\label{links}
	\\&\hspace{10pt}=
	\frac{\prod\limits_{1\le i<j\le h-1}
	\dfrac{x_i^{(h-1)}-x_j^{h-1}}{j-i}}
	{\prod\limits_{1\le i<j\le h}
	\dfrac{x_i^{(h)}-x_j^{h}}{j-i}}
	=
	(h-1)!\cdot
	\frac{\prod\limits_{1\le i<j\le h-1}
	{(x_i^{(h-1)}-x_j^{h-1})}}
	{\prod\limits_{1\le i<j\le h}
	(x_i^{(h)}-x_j^{h})}.\nonumber
\end{align}
(we have used Proposition \ref{prop:s_la_1111}).
We will denote the 
above probabilities by 
$\La^{h}_{h-1}(x^{(h)};x^{(h-1)})$.

Note that the horizontal slices of measures 
that we obtain 
in \S \ref{sub:distribution_of_lozenges_on_a_horizontal_slice}
by taking limits $ab/c\to t$
and $b/(b+c)\to\xi$ 
of the hexagon
are also related by these \emph{stochastic links}
$\La^{h}_{h-1}$.
In the GUE limit, the formula
remains the same, 
except that the
$x^{(h-1)}_i,x^{(h)}_i$
are now reals, not integers.
In this case the above formula 
\eqref{links}
gives the density of a Markov kernel
with respect to the Lebesgue measure.


\subsection{Example of a two-dimensional dynamics} 
\label{sub:an_example_of_a_two_dimensional_dynamics}

The two Markov processes discussed above 
(the Dyson Brownian motion and its discrete analogue
$L^{(N)}_{\mathrm{Poisson}}$)
are quite canonical, 
but they have one deficiency ---
they are one-dimensional (in the sense that the state space consists of particle configurations
in $\Z^{1}$ or $\R^{1}$).
We would like to construct a 
two-dimensional 
process which 
has interlacing two-dimensional
arrays 
\eqref{GT_scheme}
as its state space, and that
``stitches together'' the above one-dimensional
processes in a natural way.
We begin by considering 
one such process which is constructed as follows.

Consider random words built from the 
alphabet 
$\{1,2,\ldots,N\}$
as follows: Each letter $j$
is appended at the end of 
the word according to a standard (=~rate 1) Poisson
process' jumps, and different letters appear independently.
We can encode this as on Fig.~\ref{fig:RSK_LPP}:
We draw a star ($*$)
in row $j$
at the time moment when a new letter $j$
is added. The stars in each row form a Poisson
process, and different rows are independent.
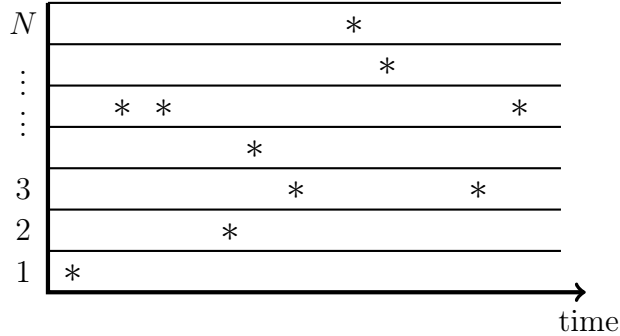
\begin{figure}[htbp]
	\begin{center}
		\begin{tabular}{cc}
			\begin{tikzpicture}
				[scale = 1.1, thick]
				\draw[->, ultra thick] (0,3.5)--(0,0) -- (6.5,0) node[below right, xshift=-15, yshift=-2] 
				{time};
				\foreach \line in {1,2,3,4,5,6,7}
				{
					\draw (6.2,\line/2)--(0,\line/2);
				}
				\foreach \line in {1,2,3}
				{
					\node at (-.3, \line/2-.25) {\line};
				}
				\node at (-.3, 7/2-.25) {$N$};
				\node at (-.3, 2.13) {$\vdots$};
				\node at (-.3, 2.63) {$\vdots$};
				\node at (.3,.23) {\large$*$};
				\node at (.9,2.23) {\large$*$};
				\node at (1.4,2.23) {\large$*$};
				\node at (2.2,.73) {\large$*$};
				\node at (2.5,1.73) {\large$*$};
				\node at (3,1.23) {\large$*$};
				\node at (3.7,3.23) {\large$*$};
				\node at (4.1,2.73) {\large$*$};
				\node at (5.2,1.23) {\large$*$};
				\node at (5.7,2.23) {\large$*$};
			\end{tikzpicture}
		\end{tabular}
	\end{center}
	\caption{Encoding random words.}
	\label{fig:RSK_LPP}
\end{figure}

From these data we construct
a Gelfand--Tsetlin scheme
\eqref{GT_scheme} of depth $N$
written as 
\begin{align*}
	\boldsymbol
	\la=(\la^{(1)}\prec\la^{(2)}\prec \ldots\prec\la^{(N)}),
	\qquad
	\la^{(h)}=(\la^{(h)}_{1}\ge \ldots\ge \la^{(h)}_{h}),
\end{align*}
as follows (see Fig.~\ref{fig:RSK_paths}):
\begin{align}
	\la^{(h)}_{1}+\la^{(h)}_{2}
	+\ldots+\la^{(h)}_{j}=
	\left(\parbox[3em]{0.58\textwidth}{
	the maximal number of ($*$)
	one can collect 
	on Fig.~\ref{fig:RSK_LPP}
	along $j$
	nonintersecting up-right
	paths that connect 
	points
	$(1,2,\ldots,j)$
	on the left border ($\mathrm{time}=0$), and $(h-j+1,h-j+2,\ldots,h)$
	on the right border ($\mathrm{time}=t>0$)
	}
	\right).
	\label{RSK_GT_definition}
\end{align}
\begin{figure}[htbp]
	\begin{center}
		\begin{tabular}{cc}
			\begin{tikzpicture}
				[scale = 1.1, thick]
				\draw[->, ultra thick] (0,3.5)--(0,0) -- (6.5,0) node[below right, xshift=-15, yshift=-2] 
				{time};
				\foreach \line in {1,2,3,4,5,6,7}
				{
					\draw (6.2,\line/2)--(0,\line/2);
				}
				\foreach \line in {1,2,3}
				{
					\node at (-.3, \line/2-.25) {\line};
				}
				\node at (-.3, 5/2-.25) {$h$};
				\node at (-.3, 7/2-.25) {$N$};
				\def\opac{.65}
				\draw[line width=3.5, color=red, opacity=\opac] (0,.73) 
				--++(.6,0)--++(0,1.5)
				--(6.2,2.23);
				\draw[line width=3.5, color=red, opacity=\opac] (0,.23) 
				--++(1.2,0)--++(0,.5)--++(1.4,0)--++(0,.5)
				--++(3,0)--++(0,.5)--(6.2,1.73);
				\draw[fill] (0,.73) circle(.1);
				\draw[fill] (0,.23) circle(.1);
				\draw[fill] (6.2,2.23) circle(.1);
				\draw[fill] (6.2,1.73) circle(.1);
				\node at (.3,.23) {\large$*$};
				\node at (.9,2.23) {\large$*$};
				\node at (1.4,2.23) {\large$*$};
				\node at (2.2,.73) {\large$*$};
				\node at (2.5,1.73) {\large$*$};
				\node at (3,1.23) {\large$*$};
				\node at (3.7,3.23) {\large$*$};
				\node at (4.1,2.73) {\large$*$};
				\node at (5.2,1.23) {\large$*$};
				\node at (5.7,2.23) {\large$*$};
			\end{tikzpicture}
		\end{tabular}
	\end{center}
	\caption{Nonintersecting paths used to determine 
	$\la^{(h)}_1+\la^{(h)}_2+\ldots+\la^{(h)}_j$, see \eqref{RSK_GT_definition}.
	On the picture, $h=5$, $j=2$, and $\la^{(5)}_{1}+\la^{(5)}_{2}=7$.}
	\label{fig:RSK_paths}
\end{figure}

In particular, we see that $\la^{(h)}_{1}$, $h=1,\ldots,N$,
is the length of the longest increasing
subsequence of letters 
in the subword made of letters $\{1,2,\ldots,h\}$.
Moreover, 
$\la^{(h)}_{1}+\ldots+\la^{(h)}_{h}$
is the total number of letters $1,2,\ldots,h$
in our word.

\begin{proposition}
	After time $t$, 
	the distribution of the Gelfand--Tsetlin
	scheme $\la$ defined by \eqref{RSK_GT_definition} is the same as the 
	$ab/c\to t$
	limit of the uniform measure
	on tilings of hexagon. 
	That is, to obtain
	the measure on Gelfand--Tsetlin schemes, one takes
	the following 
	distribution of the top row $\la^{(N)}$:
	\begin{align*}
		\mathrm{const}\cdot
		\prod_{1\le i<j\le N}
		\left(\la^{(N)}_{i}-\la^{(N)}_{j}+j-i\right)^{2}\prod_{i=1}^{N}
		\frac{t^{\la^{(N)}_{j}+N-j}}{\big(\la^{(N)}_j+N-j \big)!},
	\end{align*}
	and projects it down by 
	the stochastic links, i.e., multiplies it by 
	\begin{align*}
		\La^{N}_{N-1}(\la^{(N)},\la^{(N-1)})
		\La^{N-1}_{N-2}(\la^{(N-1)},\la^{(N-2)})
		\ldots
		\La^{2}_{1}(\la^{(2)},\la^{(1)}).
	\end{align*}
\end{proposition}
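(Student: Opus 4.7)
The plan is to identify the construction as the Robinson--Schensted--Knuth (RSK) correspondence applied to a random word in the alphabet $\{1,\ldots,N\}$ whose letters arrive at independent rate-$1$ Poisson processes. Greene's theorem asserts that $\la^{(h)}_1+\cdots+\la^{(h)}_j$ equals the maximal total length of $j$ disjoint increasing subsequences in the subword formed by letters $\le h$, which is precisely the longest nonintersecting up-right paths formula \eqref{RSK_GT_definition}. Thus the GT scheme $\boldsymbol\la=(\la^{(1)}\prec\cdots\prec\la^{(N)})$ is equivalent to the RSK semistandard insertion tableau $P$, and RSK bijects words of length $k$ with pairs $(\boldsymbol\la,Q)$, where $Q$ is a standard Young tableau of shape $\la^{(N)}$ and $k=|\la^{(N)}|$.

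Next I compute $\Prob(w)$ for a fixed word $w$ of length $k$ at time $t$. Superposing the $N$ Poisson processes gives a combined rate-$N$ Poisson process, and conditional on having $k$ arrivals the letters at those arrivals are i.i.d.~uniform on $\{1,\ldots,N\}$, so $\Prob(w)=e^{-Nt}(Nt)^{k}/k!\cdot N^{-k}=e^{-Nt}t^{k}/k!$, depending only on $k$. Since each $\boldsymbol\la$ arises from exactly $f^{\la^{(N)}}$ words (one per recording tableau $Q$), where $f^{\la^{(N)}}$ is the number of standard Young tableaux of shape $\la^{(N)}$, summing gives
\begin{equation*}
\Prob(\boldsymbol\la)=f^{\la^{(N)}}\,e^{-Nt}\,\frac{t^{|\la^{(N)}|}}{|\la^{(N)}|!}.
\end{equation*}

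Finally, I split this into the two claimed factors. Marginalizing over GT schemes with a fixed top row $\la^{(N)}$ contributes $s_{\la^{(N)}}(1,\ldots,1)$, the number of semistandard tableaux with entries $\le N$ of that shape, and therefore the conditional distribution of $\boldsymbol\la$ given the top row is the uniform $1/s_{\la^{(N)}}(1^{N})$. Combining Weyl's formula \eqref{dim_T_lambda} with the standard determinantal identity $f^{\la}=|\la|!\prod_{i<j}(x_i-x_j)/\prod_i x_i!$ in the shifted coordinates $x_i=\la^{(N)}_i+N-i$ reduces the top-row marginal, after absorbing $\la$-independent factors into the normalization, to $\mathrm{const}\cdot\prod_{i<j}(x_i-x_j)^{2}\prod_i t^{x_i}/x_i!$, which is exactly the ensemble \eqref{Prob_t}. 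On the other hand, by \eqref{links} the stochastic links telescope,
\begin{equation*}
\prod_{h=2}^{N}\La^{h}_{h-1}(\la^{(h)},\la^{(h-1)})=\frac{\dim T_{\la^{(1)}}}{\dim T_{\la^{(N)}}}=\frac{1}{s_{\la^{(N)}}(1^{N})},
\end{equation*}
which coincides with the uniform conditional density, completing the match.

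The main obstacle is the bookkeeping of conventions: the proposition writes the ensemble in terms of the weakly decreasing row lengths $\la^{(N)}_i$, but the Vandermonde-squared factor really lives on the strictly decreasing shifted coordinates $x_i=\la^{(N)}_i+N-i$, and one must carefully combine Weyl's formula and the determinantal formula for $f^{\la}$ in these variables to pin down the correct $\la$-independent normalization. A secondary point is citing Greene's theorem (or giving a short LGV-type determinantal argument) to legitimately identify \eqref{RSK_GT_definition} with the RSK insertion shape.
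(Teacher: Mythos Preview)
Your proposal is correct and follows essentially the same approach as the paper, which simply cites Greene's theorem together with the explicit formulas for the number of standard and semistandard Young tableaux. You have fleshed out precisely the details the paper leaves implicit, including the Poisson word probability, the RSK bijection, the hook-length formula for $f^{\la}$, Weyl's dimension formula for $s_\la(1^N)$, and the telescoping of the links; your remark about the $\la_i$ versus $x_i=\la_i+N-i$ convention is also on point.
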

\begin{proof}
	This is essentially Greene's theorem
	for the Robinson--Schensted correspondence
	coupled with explicit 
	formulas for the number of 
	standard and semistandard Young tableaux. 
	See, e.g.,
	\cite{Greene1974},
	\cite{sagan2001symmetric},
	\cite{Stanley1999}.
\end{proof}

As we are interested in time evolution, 
the following statement is relevant:

\begin{proposition}\label{prop:RSK_dyn}
	The Markov process on random
	words (i.e., the process of adding new letters according
	to standard Poisson processes)
	projects to a Markov
	process on Gelfand--Tsetlin schemes defined above.
	It can be described by the following rules:
	\begin{enumerate}[$\bullet$]
		\item Each ``particle''
		$\la^{(h)}_{1}$
		has an independent Poissonian clock of rate 1.
		When the clock rings, the particle jumps by 1,
		i.e., $\la^{(h)}_{1}\mapsto \la^{(h)}_{1}+1$.
		\item 
		When any particle $\la^{(h)}_{j}$
		moves by 1, it triggers either the move
		$\la^{(h+1)}_{j}\mapsto \la^{(h+1)}_{j}+1$,
		or
		$\la^{(h+1)}_{j+1}\mapsto \la^{(h+1)}_{j+1}+1$
		(exactly one of them), see Fig.~\ref{fig:moves_LR}.
		The second one is chosen generically, 
		while the first one is chosen
		only if $\la^{(h+1)}_{j}=\la^{(h)}_{j}$, 
		i.e., if the move
		$\la^{(h)}_{j}\mapsto \la^{(h)}_{j}+1$
		violated the interlacing constraint.
	\end{enumerate}
\end{proposition}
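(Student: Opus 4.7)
The plan is to derive the proposition from the Robinson--Schensted--Knuth (RSK) row-insertion algorithm, which is already the combinatorial engine underlying the definition \eqref{RSK_GT_definition}. Via Greene's theorem (used in the previous proposition), the array $\boldsymbol\la(t)$ is in explicit bijection with the $P$-tableau obtained by row-inserting the successive letters of the word, the cells of $P$ carrying the value $k$ being precisely the skew shape $\la^{(k)}/\la^{(k-1)}$. In particular the current GT scheme encodes the full $P$-tableau, so the effect on $\boldsymbol\la$ of appending a new letter is a deterministic function of the current state of $\boldsymbol\la$. This, together with the independence of the $N$ Poisson clocks driving the arrivals of the letters $1,\dots,N$, simultaneously establishes the Markov property of the projected dynamics and the fact that the only ``primary'' Poissonian clocks are the $N$ rate-one clocks of the letters; the clock of letter $h$ is to be identified with the clock of $\la^{(h)}_1$.

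To verify the first bullet I would check that the arrival of letter $h$ produces exactly the move $\la^{(h)}_1\mapsto\la^{(h)}_1+1$ and no other change at level $h$. Since $h$ is the largest entry in the restricted tableau $P|_{\le h}$, row-inserting it there cannot bump anything: it simply appends at the end of row $1$, which amounts to $\la^{(h)}_1\mapsto\la^{(h)}_1+1$ with $\la^{(h)}_j$ for $j\ge 2$ unchanged. Independence and rate $1$ for this clock come from the Poisson process attached to row $h$ in Figure~\ref{fig:RSK_LPP}.

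For the cascade rule I would analyze the bumping chain produced by row-inserting $h$ into the full $P$. It is a strictly increasing sequence $h=h_0<h_1<\dots<h_{r-1}$ in which at row $j$ the value $h_{j-1}$ bumps $h_j$ (sitting at position $\la^{(h_{j-1})}_j+1$), and the chain terminates by appending $h_{r-1}$ in row $r$. A short bookkeeping for the sub-tableau $P|_{\le k}$ shows that the single new box of $\la^{(k)}$ sits in row $j$ precisely when $k\in[h_{j-1},h_j)$ (and in row $r$ for $k\ge h_{r-1}$). Hence, going from level $k$ to level $k+1$, the moving particle either stays in row $j$ (left push) or jumps to row $j+1$ (right push), the latter occurring exactly when $k+1=h_j$. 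Since the entry at position $\la^{(k)}_j+1$ of row $j$ is equal to $h_j$ and row $j$ contains no values strictly between $h_{j-1}$ and $h_j$, one has $\la^{(k+1)}_j>\la^{(k)}_j$ before the cascade if and only if $k+1=h_j$. This is precisely the dichotomy announced in the proposition: right push when $\la^{(k+1)}_j>\la^{(k)}_j$, left push (with $\la^{(k+1)}_j\mapsto\la^{(k+1)}_j+1$) when $\la^{(k+1)}_j=\la^{(k)}_j$, i.e.\ when the bare move $\la^{(k)}_j\mapsto\la^{(k)}_j+1$ would otherwise violate interlacing.

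The main obstacle is exactly this last bookkeeping: one must carefully match the RSK bumping chain with the level-by-level interlacing dichotomy, using the semistandard monotonicity of rows to exclude intermediate values. Nothing deeper is needed; once Greene's theorem and the RSK encoding of $\boldsymbol\la$ are in place, the proposition follows from a direct combinatorial inspection.
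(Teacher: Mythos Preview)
Your argument is correct and is exactly the standard RSK bumping-chain analysis that the paper's one-line proof defers to via the citation of \cite{BorodinPetrov2013NN} (in particular \S 7 there). The paper itself gives no further details, so your direct verification---that the row of the new box in $P|_{\le k}$ changes from $j$ to $j+1$ precisely when $k+1$ equals the bumped value $h_j$, which in turn is equivalent to $\la^{(k+1)}_j>\la^{(k)}_j$ before the move---is exactly the content the reference supplies.
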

\begin{proof}
	See \cite{BorodinPetrov2013NN} (in particular,
	\S 7) and references therein.
\end{proof}
\begin{figure}[htbp]
	\begin{center}
		\begin{tikzpicture}[scale=1.3,thick]
			\def\h{1.25};
			\draw[fill] (\h,1) circle (.12) node [above right, xshift=-3] {$\la^{(h+1)}_{j}$};
			\draw[fill] (-\h,1) circle (.12) node [above left, xshift=3] {$\la^{(h+1)}_{j+1}$};
			\def\x{.1};
			\draw[->,line width=3, dotted] 
			(-\h*\x,\x) -- ++ (-\h+2*\h*\x,1-2*\x);
			\draw[->,line width=3, dotted] (\h*\x,\x) -- ++ (\h-2*\h*\x,1-2*\x);
			\draw[fill] (0,0) circle (.12) node [below right] {$\la^{(h)}_{j}$};
		\end{tikzpicture}	
	\end{center}
	\caption{Possible directions of move propagation, 
	see Proposition \ref{prop:RSK_dyn}.}
	\label{fig:moves_LR}
\end{figure}
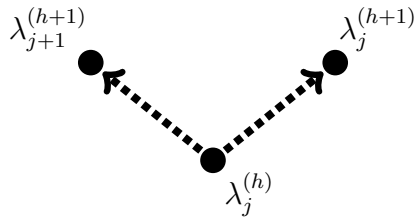

The Markov dynamics on Gelfand--Tsetlin schemes
from Proposition \ref{prop:RSK_dyn}
turns out to have the following properties:
\begin{enumerate}[(I)]
	\item For each $h\ge1$ and any initial condition, the evolution
	of $\{\la^{(1)},\ldots,\la^{(h)}\}$
	is Markovian (i.e., lower rows do not care about the upper ones).
	\item For each $h\ge1$, the evolution
	preserves the Gibbs property (\S \ref{sub:gibbs_property}) on 
	$\{\la^{(1)},\ldots,\la^{(h)}\}$.
	That is, if one starts with an initial condition
	of the form
	\begin{align*}
		\Prob\{\la^{(1)},\ldots,\la^{(h)}\}
		=m_h(\la^{(h)})
		\La^{h}_{h-1}(\la^{(h)},\la^{(h-1)})
		\ldots
		\La^{2}_{1}(\la^{(2)},\la^{(1)}),
	\end{align*}
	then after running the dynamics for any time $t>0$,
	the distribution of 
	$\{\la^{(1)},\ldots,\la^{(h)}\}$
	will be of the same form with a different
	probability measure $\tilde m_h$.
	\item 
	For each $h\ge1$, the map $m_h\mapsto\tilde m_h$
	is the time $t$ evolution of the Markov process
	with the generator $L^{(h)}_{\mathrm{Poisson}}$
	\eqref{Poisson_generator}.
\end{enumerate}
While (I) is obvious, (II) and (III)
are not; they follow e.g. from 
Theorem \ref{thm:Schur_system_of_eqns} below.

There is one more property 
which can be easily
observed from 
the random words description of the dynamics.
Namely, the projection of the process of 
Proposition \ref{prop:RSK_dyn}
to the rightmost particles 
$\la^{(1)}_{1},\ldots,\la^{(N)}_{1}$
is \emph{Markov}. 
It is more convenient to describe it in shifted
strictly 
ordered coordinates 
$y_1=\la^{(1)}_{1}< 
y_2=\la^{(2)}_{1}+1<\ldots< y_N=\la^{(N)}+N-1$
(cf. \eqref{shifted_GT_scheme}). Then each $y_j$
jumps to the right by 1
independently with rate 1,
and pushes $y_{j+1}$
over by 1 
if $y_{j+1}$ occupies the 
target location of $y_j$
(i.e., if we had $y_{j+1}=y_{j}+1$ before the jump).
We call this process the 
\emph{PushTASEP}, i.e., the
\emph{Pushing Totally Asymmetric Simple Exclusion Process}
(it was introduced in
\cite{Spitzer1970} under the name \emph{long-range TASEP},
see also \cite{BorFerr08push}).

\begin{remark}
	Definition \eqref{RSK_GT_definition}
	is 
	powered by what is known as 
	Robinson--Schen\-sted algorithm in 
	Combinatorics.
	Questions related to application of
	various 
	insertion algorithms (including
	the general Robinson--Schensted
	algorithm) 
	to random input 
	were considered in, e.g., 
	\cite{baik1999distribution},
	\cite{johansson2000shape},
	\cite[\S5]{Johansson2005lectures}, and
	\cite{ForresterRains2007},
	and can be traced back to 
	the 
	work of Vershik and Kerov
	\cite{Vershik1986}
	in mid-1980's.
	The 
	dynamical perspective 
	has been 
	substantially developed by 
	O'Connell \cite{OConnell2003Trans}, \cite{OConnell2003}, \cite{Oconnell2009_Toda} and
	Biane--Bougerol--O'Connell \cite{BBO2004}
	(see also Chhaibi \cite{Chhaibi2013}).
\end{remark}


\subsection{General construction of two-dimensional dynamics} 
\label{sub:general_construction_of_two_dimensional_dynamics}

The existence of the Markov dynamics
(of Proposition \ref{prop:RSK_dyn})
satisfying (I)--(III)
is remarkable, yet its above 
construction is fairly complicated. 
We would like to access it in a different way.

Let us search for all continuous-time Markov jump
processes on Gelfand--Tsetlin
schemes which satisfy conditions
(I)--(III) of \S \ref{sub:an_example_of_a_two_dimensional_dynamics}.
They must have the following structure: 
Each particle $\la^{(h)}_{j}$
jumps to the right by 1
with a certain rate (potentially dependent on 
$\la^{(1)},\ldots,\la^{(h)}$),
and its jump triggers further moves 
on the higher levels
$\la^{(h+1)},\ldots,\la^{(N)}$.
Indeed, because of (III) and the fact that 
$L^{(h)}_{\mathrm{Poisson}}$
moves one particle at a time, no two particles
on the same level can jump simultaneously.
Moreover, because of (I), moves can propagate
only upwards.

In order to reach a reasonable classification, we need 
to restrict the class further
by requiring \emph{nearest neighbor interactions}:
A move of $\la^{(h)}_{j}$
can only trigger
(potentially with certain probabilities)
moves of the immediate top right neighbor 
$\la^{(h+1)}_{j}$
and the 
the immediate top left 
neighbor $\la^{(h+1)}_{j+1}$ (see Fig.~\ref{fig:moves_LR}), which 
can trigger moves 
on level $h+2$, and so on.
Actually, it is better to extend the notion of
the top right nearest neighbor
from $\la^{(h+1)}_{j}$ to 
the first particle in the 
sequence
$\la^{(h+1)}_{j},\la^{(h+1)}_{j-1},\ldots,\la^{(h+1)}_{1}$
whose jump 
does not violate interlacing.
We will additionally assume 
(extending the nearest neighbor hypothesis)
that the individual 
jump rates of particles 
at level $h$
may only depend on $\la^{(h-1)}$ and $\la^{(h)}$,
and that the 
same is true for left and right probabilities of move
propagation from level $h-1$
to level $h$.

Let us now parametrize our possibilities.
Fix $h\ge2$, and denote 
by $w_j=w_j(\la^{(h-1)},\la^{(h)})$
the jump rate of $\la^{(h)}_{j}$, $1\le j\le h$.
Also, denote by $l_j$ the \emph{conditional} probabilities
that,
given that the 
$j$th  
part of $\la^{(h-1)}$
has just increased by 1,
this move propagates
to the top left neighbor
$\la^{(h)}_{j+1}$ of $\la^{(h-1)}_{j}$:
\begin{align*}
	l_j(\la^{(h-1)},\la^{(h)})=
	\Prob\big(\la^{(h)}\mapsto
	\la^{(h)}+\de_{j+1}\,|\,
	\la^{(h-1)}\mapsto
	\la^{(h-1)}+\de_{j}\big),
\end{align*}
where $\de_j$ is the vector 
having zeros at each position 
except the $j$th where it has 1.
Similarly, let 
\begin{align*}
	r_j(\la^{(h-1)},\la^{(h)})=
	\Prob\big(\la^{(h)}\mapsto
	\la^{(h)}+\de_{\xi(j)}\,|\,
	\la^{(h-1)}\mapsto
	\la^{(h-1)}+\de_{j}\big),
\end{align*}
where $\xi(j)$
is the lower index of the nearest 
top right neighbor of $\la^{(h-1)}_{j}$
that is free to jump (typically, $\xi(j)=j$).

\begin{proposition}\label{prop:apart}
	Assume that $\la^{(h-1)}$ and $\la^{(h)}$
	interlace with strict inequalities instead of weak ones.
	Then properties {\rm{}(II)\/}, {\rm{}(III)\/} imply
	\begin{align*}
		r_m+l_{m-1}+w_m=1,\qquad 1\le m\le h,
	\end{align*}
	where we set $r_h=l_0=0$.
\end{proposition}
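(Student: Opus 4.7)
The plan is to translate properties (II) and (III) into an intertwining between the effective two-level dynamics on $(\mu,\la) := (\la^{(h-1)},\la^{(h)})$ and the Poisson generator $L^{(h)}_{\mathrm{Poisson}}$, and then probe this identity at a single strictly interlacing configuration. Write $\La = \La^{h}_{h-1}$ and recall the explicit product formula for $\La(\la,\mu)$ from \eqref{links}, as well as the analogous formula at level $h-1$ which, via property (III) applied to level $h-1$, gives the effective Poisson--Doob jump rate $W_k(\mu)$ at which $\mu_k \mapsto \mu_k+1$.

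First, I would combine properties (I) and (II) to integrate out levels $\la^{(1)},\ldots,\la^{(h-2)}$: by the Gibbs conclusion of (II) their conditional law given $(\mu,\la)$ is the iterated stochastic link, preserved in time by (I), so the effective generator $R$ on $(\mu,\la)$ has exactly the four transition types listed in \S\ref{sub:general_construction_of_two_dimensional_dynamics} with level-$(h-1)$ jumps at rates $W_k(\mu)$. Properties (II) and (III) then reduce to the single intertwining
\[
R^{*}(m\,\La) \;=\; (L^{(h)}_{\mathrm{Poisson}}\,m)\,\La,
\]
valid for every distribution $m$ on $\la^{(h)}$-configurations.

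Next, I would specialize to $m = \delta_{\la}$ and read off both sides of this identity at the state $(\mu, \la+\de_m)$. The left side collects the three ways probability can flow into this state: an intrinsic jump from $(\mu,\la)$ contributing $\La(\la,\mu)\,w_m$; a left-propagation from $(\mu-\de_{m-1},\la)$ contributing $\La(\la,\mu-\de_{m-1})\,W_{m-1}(\mu-\de_{m-1})\,l_{m-1}$; and a right-propagation from $(\mu-\de_m,\la)$ contributing $\La(\la,\mu-\de_m)\,W_m(\mu-\de_m)\,r_m$. The right side equals $L^{(h)}_{\mathrm{Poisson}}(\la\to\la+\de_m)\cdot\La(\la+\de_m,\mu)$.

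Finally, a telescoping cancellation using the explicit product forms finishes the job: each pairing $\La(\la,\mu-\de_k)\,W_k(\mu-\de_k)$ collapses to $\La(\la,\mu)$ (the level-$(h-1)$ Vandermondes cancel pairwise), while on the right the Poisson--Doob rate $L^{(h)}_{\mathrm{Poisson}}(\la\to\la+\de_m)$ cancels against $\La(\la+\de_m,\mu)/\La(\la,\mu)$. Dividing by the common factor $\La(\la,\mu)$ yields $w_m + l_{m-1} + r_m = 1$, with the boundary conventions $l_0 = r_h = 0$ accounting for the absent source configurations at $m = 1$ and $m = h$. The main obstacle is one of bookkeeping --- in the proposition's convention $l_{m-1}$ and $r_m$ are attached to the state $(\mu,\la)$ at which $\la^{(h)}$ is about to jump, and the strict-interlacing hypothesis is exactly what guarantees the three source configurations $(\mu,\la)$, $(\mu-\de_{m-1},\la)$, $(\mu-\de_m,\la)$ are simultaneously admissible, so that no boundary correction spoils the cancellation.
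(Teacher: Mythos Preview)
Your proof is correct and follows essentially the same approach as the paper's: both start from the Gibbs measure with a delta at $\la^{(h)}$ on top, and compare the infinitesimal evolution as dictated by $L^{(h)}_{\mathrm{Poisson}}$ (via properties (II)--(III)) with the direct two-level description of the dynamics. The paper compresses this into a two-sentence sketch (``compute the rate of $\la^{(h)}\mapsto\la^{(h)}+\de_m$ in two different ways''), whereas you make the argument precise by writing out the intertwining $R^*(m\,\La)=((L^{(h)}_{\mathrm{Poisson}})^*m)\,\La$, specializing to $m=\delta_\la$, and reading off the balance at the specific joint state $(\mu,\la+\de_m)$ --- which is exactly what is needed to obtain the \emph{pointwise} relation rather than merely an averaged one. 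Your explicit verification that $\La(\la,\mu-\de_k)\,W_k(\mu-\de_k)=\La(\la,\mu)$ and $L^{(h)}_{\mathrm{Poisson}}(\la\to\la+\de_m)\,\La(\la+\de_m,\mu)=\La(\la,\mu)$ via the Vandermonde ratios is the computation the paper leaves implicit, and your remark that strict interlacing guarantees admissibility of all three source states is a useful clarification of where that hypothesis enters.
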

\begin{proof}
	Consider the Gibbs measure on the first
	$h$ levels that projects to
	the delta measure
	at $\la^{(h)}$ on level $h$.
	The rate of any 
	jump $\la^{(h)}\mapsto\la^{(h)}+\de_m$, 
	$1\le m\le h$, can be computed in two different
	ways, using $L^{(h)}_{\mathrm{Poisson}}$
	or using the two-dimensional dynamics
	on the array.
	The projection of the array dynamics
	to levels $(h-1,h)$
	looks as follows:
	On level $h-1$ we have
	the process driven by 
	$L^{(h-1)}_{\mathrm{Poisson}}$
	whose jumps may propagate
	to level $h$ 
	with probabilities $(l_j,r_j)$.
	Moreover, 
	particles on level $h$
	can jump independently
	according to the jump rates $w_j$.
	Comparing the two ways to describe the rate of 
	$\la^{(h)}\mapsto\la^{(h)}+\de_m$ yields the desired relations.
\end{proof}

The system of equations of Proposition 
\ref{prop:apart} needs to be modified 
if the inequalities between parts of 
$\la^{(h-1)}$ and $\la^{(h)}$
are not strict. 
Indeed, if $\la^{(h)}_{j}$ is ``blocked'' 
by $\la^{(h-1)}_{j-1}$, i.e., 
$\la^{(h)}_{j}=\la^{(h-1)}_{j-1}$, then
$w_j$ must be zero, and also $l_{j-1}$
and $r_{j-1}$ make no sense as $\la^{(h-1)}$ could not
have just come from the jump
$\la^{(h-1)}-\de_{j-1}\mapsto\la^{(h-1)}$.
The modification looks as follows.
\begin{theorem}\label{thm:Schur_system_of_eqns}
	For any $\la^{(h-1)}\prec\la^{(h)}$, let 
	\begin{align*}
		\big\{j_1+1<j_2+1<\ldots<j_\kappa+1\big\}
	\end{align*}
	be the set of indices such that 
	each particle $\la^{(h)}_{j_m+1}$ 
	is free to move, i.e., $\la^{(h)}_{j_m+1}<\la^{(h-1)}_{j_m}$.
	Then
	\begin{align}\label{linear_eqn_j_m}
		r_{j_{m+1}}+l_{j_m}+w_{j_m+1}=1,\qquad 
		1\le m\le \kappa,
	\end{align}
	with the agreement that $r_{j_{\kappa}+1}=l_{j_0}=0$.

	Solving these equations for all pairs 
	$\la^{(h-1)}\prec\la^{(h)}$, $h=2,3,\ldots$,
	under the conditions 
	$r_j(\la^{(h-1)},\la^{(h)})\ge0$,
	$l_j(\la^{(h-1)},\la^{(h)})\ge0$,
	$w_j(\la^{(h-1)},\la^{(h)})\ge0$, 
	and 
	$r_j(\la^{(h-1)},\la^{(h)})+
	l_j(\la^{(h-1)},\la^{(h)})\le 1$, 
	is equivalent to
	constructing a nearest neighbor
	Markov dynamics
	as defined above
	satisfying conditions {\rm{}(I)--(III)\/}, with an additional
	``forced move'' rule: 
	If
	$\la^{(h-1)}_{j}=\la^{(h)}_{j}$
	and $\la^{(h-1)}_{j}$
	moves (by 1),
	then $\la^{(h)}_{j}$
	also moves.\footnote{The forced move corresponds
	to the only possibility
	of having $\la^{(h-1)}$ and $\la^{(h)}$
	not interlacing 
	after a move on level $h-1$.}
\end{theorem}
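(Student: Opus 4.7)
The argument extends Proposition \ref{prop:apart} to the general weak-interlacing case by comparing, for each free index $m=j_m+1$, two expressions for the marginal transition rate $\la^{(h)}\mapsto\la^{(h)}+\de_m$. Property (III) gives one expression as $L^{(h)}_{\mathrm{Poisson}}(\la^{(h)},\la^{(h)}+\de_m)$, a ratio of Vandermonde determinants depending only on $\la^{(h)}$. Reading off the 2d dynamics yields another expression: $w_m$ (direct jump) plus $l_{m-1}$ times the level-$(h-1)$ Poisson rate (left-propagation) plus a sum over those $j$ with $\xi(j)=m$ of $r_j$ times the level-$(h-1)$ Poisson rate (right-propagations).

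The combinatorial structure is controlled by the free/blocked classification. Since $\xi(j)$ is the largest free index not exceeding $j$, the set $\{j:\xi(j)=m\}$ equals $\{j_m+1,j_m+2,\ldots,j_{m+1}\}$: the free index itself together with the blocked indices up to the next free one. The analogous rate-matching equations for each blocked $k\in\{j_m+2,\ldots,j_{m+1}\}$ involve only the left-propagation from $\la^{(h-1)}_{k-1}$ on the 2d side (direct jumps of $\la^{(h)}_k$ are blocked and no right-propagation targets $k$), and thus pin down $l_{k-1}$ in terms of Vandermonde ratios. The forced-move rule further determines the $r_j$'s whenever a ``right equality'' $\la^{(h-1)}_j=\la^{(h)}_j$ occurs in the chain. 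Substituting these determined values into the rate-matching equation for the free index $m$ collapses the Vandermonde ratios telescopically along the blocked chain, leaving the single linear identity $r_{j_{m+1}}+l_{j_m}+w_{j_m+1}=1$ asserted by \eqref{linear_eqn_j_m}.

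The converse direction is structurally routine: given nonnegative $w,r,l$ satisfying the stated linear system with $r_j+l_j\le1$, one defines the Markov jump rates and propagation probabilities accordingly, and verifies by reversing the above calculation that the induced generator on each level $h$ is exactly $L^{(h)}_{\mathrm{Poisson}}$. This gives (III), from which (II) follows by a standard intertwining argument, while (I) is built into the nearest-neighbor construction. The principal obstacle is the telescoping identity in the previous paragraph: reducing the match of Vandermonde ratios to the clean single linear relation requires careful bookkeeping across each maximal blocked chain together with the forced-move constraints, and constitutes the technical heart of the theorem, worked out in detail in \cite{BorodinPetrov2013NN}.
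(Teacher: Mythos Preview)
Your outline follows the same rate-matching strategy the paper invokes (``Similar to Proposition~\ref{prop:apart}, cf.\ \cite[\S6]{BorodinPetrov2013NN}''), and like the paper you ultimately defer the full computation to \cite{BorodinPetrov2013NN}. In that sense the approaches coincide.

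That said, your elaboration contains a conceptual slip worth flagging. You phrase the argument as: for each \emph{fixed} pair $(\la^{(h-1)},\la^{(h)})$, property (III) forces the conditional rate of $\la^{(h)}\to\la^{(h)}+\de_m$ to equal $L^{(h)}_{\mathrm{Poisson}}(\la^{(h)},\la^{(h)}+\de_m)$. But (III) alone is only a statement about the \emph{marginal} on level $h$ after averaging over $\la^{(h-1)}$ with the Gibbs weights $\La^{h}_{h-1}$; this is exactly why the proof of Proposition~\ref{prop:apart} begins by fixing the Gibbs measure that projects to a delta at $\la^{(h)}$. The pointwise-in-$(\la^{(h-1)},\la^{(h)})$ equations of the theorem come from combining (III) with the Gibbs-preservation condition (II), i.e.\ from the full intertwining relation between $L^{(h-1)}_{\mathrm{Poisson}}$, $L^{(h)}_{\mathrm{Poisson}}$, and $\La^{h}_{h-1}$. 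When that intertwining is written out, the link $\La^{h}_{h-1}$ enters explicitly, and it is the cancellation of the Vandermonde factors in $\La^{h}_{h-1}$ against those in the two Poisson generators that produces the clean constants $1$ on the right of \eqref{linear_eqn_j_m}. Your ``telescoping along blocked chains'' is not really the mechanism here; rather, the paper notes just before the theorem that at a blocked index $j$ the quantities $l_{j-1},r_{j-1}$ simply make no sense (the state could not have arisen from $\la^{(h-1)}-\de_{j-1}\mapsto\la^{(h-1)}$), so there are no nontrivial equations at blocked indices to substitute in. The equations \eqref{linear_eqn_j_m} arise directly at the free indices once the intertwining is unpacked, and the forced-move rule is what guarantees interlacing is never violated rather than something that feeds back into the linear system.
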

\begin{proof}
	Similar to Proposition \ref{prop:apart},
	cf. \cite[\S6]{BorodinPetrov2013NN}.
\end{proof}

It is easy to describe linear spaces of solutions
to the above linear
systems. Any combination of them, 
for every pair $\la^{(h-1)}\prec\la^{(h)}$,
gives us a Markov process with desired properties.
One can choose such combinations to design
different processes.


\subsection{Further examples of two-dimensional dynamics} 
\label{sub:examples_of_two}

We give three examples
below,
see \cite{BorodinPetrov2013NN} for more.

\medskip

\noindent
\textbf{Example 1.}
All $l_j\equiv 1$, all $r_j\equiv 0$, and 
\begin{align*}
	w_j=\begin{cases}
		1,&j=1;\\
		0,&\mbox{otherwise}.
	\end{cases}
\end{align*}
This is the dynamics 
described above in 
\S \ref{sub:an_example_of_a_two_dimensional_dynamics}
via nonintersecting paths.

\medskip

\noindent
\textbf{Example 2.}
All $r_j\equiv 1$, all $l_j\equiv 0$, and
\begin{align*}
	w_j=\begin{cases}
		1,&j=h;\\
		0,&\mbox{otherwise}.
	\end{cases}
\end{align*}
This dynamics can be viewed as coming from the 
column 
insertion algorithm (as opposed to the row insertion algorithm
corresponding to the dynamics of 
\S \ref{sub:an_example_of_a_two_dimensional_dynamics}).
Observe that the restriction of this dynamics
to the left-most particles
$\{\la^{(h)}_{h}\}_{h=1}^{N}$
is Markovian. Via the shift $y_h=\la^{(h)}_{h}-h$
this restriction matches the well-known \emph{Totally Asymmetric Simple Exclusion Process}
(\emph{TASEP}).
This dynamics was first introduced in \cite{OConnell2003Trans}.

\medskip

\noindent
\textbf{Example 3.}
All $r_j\equiv 0$, 
all $l_j\equiv 0$,
and all $w_j\equiv 1$.
This dynamics has minimal pushing and maximal
``noise''
(coming from individual jumps).
It can be viewed as a two-dimensional
growth model;
in terms of the stepped surfaces
interpretation, 
independently with rate one
this dynamics adds all possible 
``sticks'' (directed columns) of the form
\begin{center}
	\begin{tikzpicture}
		[scale=.5]
		\def\rr{.866};
		\def\h{.5};
		\foreach \vl in {(0,0)}
		{
		\begin{scope}[shift=\vl]
			\draw [thick] (0,0) -- (.5,0.866) -- (1,0) -- (.5,-0.866) -- cycle;
        \end{scope}
        }
        \foreach \ll in {(\h,-\rr)}
		{
		\begin{scope}[shift=\ll]
			\draw [thick] (0,0) -- (.5,0.866) -- (1.5,0.866) -- (1,0) -- cycle;
        \end{scope}
        }
        \foreach \rl in {(2*\h,0)}
		{
		\begin{scope}[shift=\rl]
			\draw [thick] (0,0) -- (-.5,0.866) -- (.5,0.866) -- (1,0) -- cycle;
        \end{scope}
        }
	\end{tikzpicture}
	\hspace{30pt}
	\begin{tikzpicture}
		[scale=.5]
		\def\rr{.866};
		\def\h{.5};
		\foreach \vl in {(0,0),(-\h,-\rr)}
		{
		\begin{scope}[shift=\vl]
			\draw [thick] (0,0) -- (.5,0.866) -- (1,0) -- (.5,-0.866) -- cycle;
        \end{scope}
        }
        \foreach \ll in {(\h,-\rr),(0,-2*\rr)}
		{
		\begin{scope}[shift=\ll]
			\draw [thick] (0,0) -- (.5,0.866) -- (1.5,0.866) -- (1,0) -- cycle;
        \end{scope}
        }
        \foreach \rl in {(2*\h,0)}
		{
		\begin{scope}[shift=\rl]
			\draw [thick] (0,0) -- (-.5,0.866) -- (.5,0.866) -- (1,0) -- cycle;
        \end{scope}
        }
	\end{tikzpicture}
	\hspace{30pt}
	\begin{tikzpicture}
		[scale=.5]
		\def\rr{.866};
		\def\h{.5};
		\foreach \vl in {(0,0),(-\h,-\rr),(-2*\h,-2*\rr)}
		{
		\begin{scope}[shift=\vl]
			\draw [thick] (0,0) -- (.5,0.866) -- (1,0) -- (.5,-0.866) -- cycle;
        \end{scope}
        }
        \foreach \ll in {(\h,-\rr),(0,-2*\rr),(-\h,-3*\rr)}
		{
		\begin{scope}[shift=\ll]
			\draw [thick] (0,0) -- (.5,0.866) -- (1.5,0.866) -- (1,0) -- cycle;
        \end{scope}
        }
        \foreach \rl in {(2*\h,0)}
		{
		\begin{scope}[shift=\rl]
			\draw [thick] (0,0) -- (-.5,0.866) -- (.5,0.866) -- (1,0) -- cycle;
        \end{scope}
        }
	\end{tikzpicture}
	\hspace{30pt}
	\begin{tikzpicture}
		[scale=.5]
		\def\rr{.866};
		\def\h{.5};
		\foreach \vl in {(0,0),(-\h,-\rr),(-2*\h,-2*\rr),(-3*\h,-3*\rr)}
		{
		\begin{scope}[shift=\vl]
			\draw [thick] (0,0) -- (.5,0.866) -- (1,0) -- (.5,-0.866) -- cycle;
        \end{scope}
        }
        \foreach \ll in {(\h,-\rr),(0,-2*\rr),(-\h,-3*\rr),(-1,-4*\rr)}
		{
		\begin{scope}[shift=\ll]
			\draw [thick] (0,0) -- (.5,0.866) -- (1.5,0.866) -- (1,0) -- cycle;
        \end{scope}
        }
        \foreach \rl in {(2*\h,0)}
		{
		\begin{scope}[shift=\rl]
			\draw [thick] (0,0) -- (-.5,0.866) -- (.5,0.866) -- (1,0) -- cycle;
        \end{scope}
        }
	\end{tikzpicture}
	\hspace{20pt}
	\raisebox{24pt}{$\ldots$}
\end{center}
directed as shown
(no overhangs allowed). 

Projection of this dynamics
to the leftmost particles 
$\{\la^{(h)}_{h}\}_{h=1}^{N}$
gives TASEP, and 
projection 
to the rightmost particles
$\{\la^{(h)}_{1}\}_{h=1}^{N}$
gives PushTASEP.
See \cite{BorFerr2008DF}
for more details on this dynamics.

\medskip

Pictorially, the three examples 
can be represented as on Fig.~\ref{fig:3ex}.\begin{figure}[htbp]
	\begin{center}
		\begin{tabular}{ccc}
			\begin{tikzpicture}[scale=.65]
				\def\r{.14};
				\def\h{.9};
				\def\x{.7};
				\foreach \pt in {(0,0),
				(-\x,\h),(\x,\h),
				(2*\x,2*\h),(-2*\x,2*\h),(0,2*\h),
				(3*\x,3*\h),(1*\x,3*\h),(-\x,3*\h),(-3*\x,3*\h)}
				{
					\draw[fill] \pt circle (\r);
				}
				\foreach \pt in {(0,0),
				(\x,\h),
				(2*\x,2*\h),
				(3*\x,3*\h)}
				{
					\draw[color=red,ultra thick] \pt circle (1.8*\r);
				}
				\foreach \la in {(0,0),
				(\x,\h),(-\x,\h),
				(2*\x,2*\h),(0,2*\h),(-2*\x,2*\h)}
				{
					\draw[line width=2.4,->] \la -- ++(-.9*\x,.9*\h);
				}
			\end{tikzpicture}
			&\hspace{20pt}
			\begin{tikzpicture}[scale=.65]
				\def\r{.14};
				\def\h{.9};
				\def\x{.7};
				\foreach \pt in {(0,0),
				(-\x,\h),(\x,\h),
				(2*\x,2*\h),(-2*\x,2*\h),(0,2*\h),
				(3*\x,3*\h),(1*\x,3*\h),(-\x,3*\h),(-3*\x,3*\h)}
				{
					\draw[fill] \pt circle (\r);
				}
				\foreach \pt in {(0,0),
				(-\x,\h),
				(-2*\x,2*\h),
				(-3*\x,3*\h)}
				{
					\draw[color=red,ultra thick] \pt circle (1.8*\r);
				}
				\foreach \ra in {(0,0),
				(\x,\h),(-\x,\h),
				(2*\x,2*\h),(0,2*\h),(-2*\x,2*\h)}
				{
					\draw[line width=2.4,->] \ra -- ++(.9*\x,.9*\h);
				}
			\end{tikzpicture}
			&\hspace{20pt}
			\begin{tikzpicture}[scale=.65]
				\def\r{.14};
				\def\h{.9};
				\def\x{.7};
				\foreach \pt in {(0,0),
				(-\x,\h),(\x,\h),
				(2*\x,2*\h),(-2*\x,2*\h),(0,2*\h),
				(3*\x,3*\h),(1*\x,3*\h),(-\x,3*\h),(-3*\x,3*\h)}
				{
					\draw[fill] \pt circle (\r);
				}
				\foreach \pt in {(0,0),
				(-\x,\h),(\x,\h),
				(2*\x,2*\h),(-2*\x,2*\h),(0,2*\h),
				(3*\x,3*\h),(1*\x,3*\h),(-\x,3*\h),(-3*\x,3*\h)}
				{
					\draw[color=red,ultra thick] \pt circle (1.8*\r);
				}
			\end{tikzpicture}
		\end{tabular}
	\end{center}
	\caption{Examples 1 (left), 2 (center), and 3 (right)
	of two-dimensional dynamics on interlacing arrays.
	Circles correspond to $w_j=1$, 
	and right and left arrows to 
	$r_j$ or $l_j=1$.}
	\label{fig:3ex}
\end{figure}
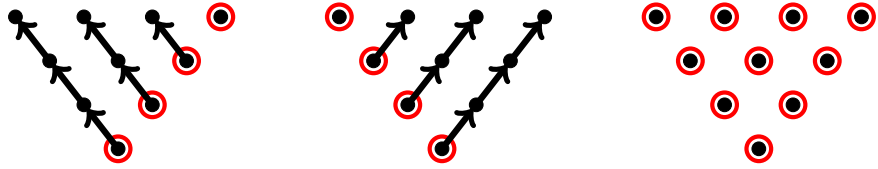
Pictures like Fig.~\ref{fig:any_ex} 
provide other interesting examples 
of two-dimensional dynamics, cf. \cite[\S7]{BorodinPetrov2013NN}.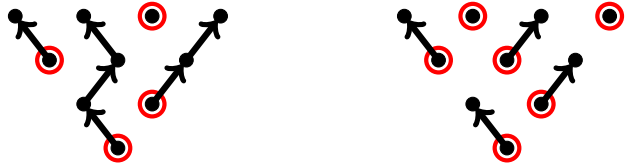
\begin{figure}[htbp]
	\begin{center}
		\begin{tabular}{cc}
			\begin{tikzpicture}[scale=.65]
				\def\r{.14};
				\def\h{.9};
				\def\x{.7};
				\foreach \pt in {(0,0),
				(-\x,\h),(\x,\h),
				(2*\x,2*\h),(-2*\x,2*\h),(0,2*\h),
				(3*\x,3*\h),(1*\x,3*\h),(-\x,3*\h),(-3*\x,3*\h)}
				{
					\draw[fill] \pt circle (\r);
				}
				\foreach \pt in {(0,0),
				(\x,\h),
				(-2*\x,2*\h),
				(1*\x,3*\h)}
				{
					\draw[color=red,ultra thick] \pt circle (1.8*\r);
				}
				\foreach \la in {(0,0),
				(0,2*\h),(-2*\x,2*\h)}
				{
					\draw[line width=2.4,->] \la -- ++(-.9*\x,.9*\h);
				}
				\foreach \ra in {
				(\x,\h),(-\x,\h),
				(2*\x,2*\h)}
				{
					\draw[line width=2.4,->] \ra -- ++(.9*\x,.9*\h);
				}
			\end{tikzpicture}	
			&\hspace{50pt}
			\begin{tikzpicture}[scale=.65]
				\def\r{.14};
				\def\h{.9};
				\def\x{.7};
				\foreach \pt in {(0,0),
				(-\x,\h),(\x,\h),
				(2*\x,2*\h),(-2*\x,2*\h),(0,2*\h),
				(3*\x,3*\h),(1*\x,3*\h),(-\x,3*\h),(-3*\x,3*\h)}
				{
					\draw[fill] \pt circle (\r);
				}
				\foreach \pt in {(0,0),
				(\x,\h),
				(-2*\x,2*\h),(0,2*\h),
				(3*\x,3*\h),(-1*\x,3*\h)}
				{
					\draw[color=red,ultra thick] \pt circle (1.8*\r);
				}
				\foreach \la in {(0,0),
				(-2*\x,2*\h)}
				{
					\draw[line width=2.4,->] \la -- ++(-.9*\x,.9*\h);
				}
				\foreach \ra in {
				(\x,\h),
				(0*\x,2*\h)}
				{
					\draw[line width=2.4,->] \ra -- ++(.9*\x,.9*\h);
				}
			\end{tikzpicture}
		\end{tabular}
	\end{center}
	\caption{Other examples of a two-dimensional dynamics.}
	\label{fig:any_ex}
\end{figure}
\begin{figure}[htbp]
	\begin{center}
		\framebox{\begin{tikzpicture}[scale=1,thick]
			\def\h{.65};
			\draw[fill] (\h,1) circle (.12);
			\draw[color=red, line width=1.8] (\h,1) circle (.22);
			\draw[color=white, line width=1.8] (-\h,1) circle (.22);
			\draw[fill] (-\h,1) circle (.12);
			\def\x{.12};
			\draw[->,line width=3] (0-\h*\x,\x) -- (-\h+\h*\x,1-\x);
			\draw[line width=3] (0,0) circle (.12);
		\end{tikzpicture}}
		\qquad\raisebox{23pt}{$\Longleftrightarrow$}\qquad
		\framebox{\begin{tikzpicture}[scale=1,thick]
			\def\h{.65};
			\draw[fill] (\h,1) circle (.12);
			\draw[color=red, line width=1.8] (\h,1) circle (.22);
			\draw[color=red, line width=1.8] (-\h,1) circle (.22);
			\draw[fill] (-\h,1) circle (.12);
			\def\x{.12};
			\draw[line width=3] (0,0) circle (.12);
		\end{tikzpicture}}
		\qquad
		\raisebox{23pt}{$\Longleftrightarrow$}\qquad
		\framebox{\begin{tikzpicture}[scale=1,thick]
			\def\h{.65	};
			\draw[fill] (\h,1) circle (.12);
			\draw[color=red, line width=1.8] (-\h,1) circle (.22);
			\draw[color=white, line width=1.8] (\h,1) circle (.22);
			\draw[fill] (-\h,1) circle (.12);
			\def\x{.12};
			\draw[->,line width=3] (\h*\x,\x) -- (\h-\h*\x,1-\x);
			\draw[line width=3] (0,0) circle (.12);
		\end{tikzpicture}}
	\end{center}
	\caption{Local ``flips'' are defined for any three particles
	$\la^{(h+1)}_{j}$, $\la^{(h+1)}_{j+1}$, and $\la^{(h)}_{j}$
	in the array such that the are no outside arrows pointing to any of the upper
	particles. 
	The type of the lower particle $\la^{(h)}_{j}$ can be arbitrary
	(i.e., it can jump independently or be pushed/pulled).
	The ``flip'' operation allows to replace one of the three 
	local pictures by any of the two remaining ones.}
	\label{fig:flips}
\end{figure}
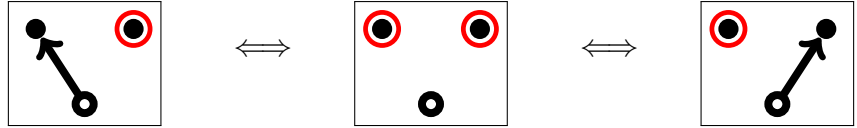
Note that due to \eqref{linear_eqn_j_m}, pictures 
corresponding to various
``fundamental'' dynamics for which 
two of the three quantities 
$r_{j_{m+1}}$, $l_{j_m}$, and $w_{j_m+1}$
in each of the equations
\eqref{linear_eqn_j_m}
are zero (and the remaining quantity is $1$)
can be transformed one into the other
by a sequence of local 
``flips'' as on Fig.~\ref{fig:flips}.


\subsection{Conclusion} 
\label{sub:conclusion1}

We have seen how to construct
random growth models
in $(1+1)$ dimension (TASEP, PushTASEP)
and $(2+1)$ dimension, and 
in \S \ref{sec:asymptotics} we have seen 
how these models can be analyzed 
at large times.
We will now move on to a $q$-deformation of this picture, 
which will eventually lead us to directed polymers in random media.



\section{The $(q,t)$-generalization} 
\label{sec:_q_t_generalization}

\subsection{Historical remarks: deformations 
of Schur polynomials} 
\label{sub:historical_remarks}

The developments of the previous sections
were heavily based on properties
of the Schur polynomials
\begin{align*}
	s_\la(x_1,\ldots,x_N)
	&=\frac{\det\big[x_i^{N+\la_j-j}\big]_{i,j=1}^{N}}
	{\det\big[x_i^{N-j}\big]_{i,j=1}^{N}}
	\\&=\sum_{\la^{(1)}\prec\la^{(2)}\prec 
	\ldots\prec\la^{(N)}}
	x_1^{|\la^{(1)}|}
	x_2^{|\la^{(2)}|-|\la^{(1)}|}\ldots
	x_N^{|\la^{(N)}|-|\la^{(N-1)}|}.
\end{align*}
We would like to 
add parameters to the theory.

It is 
easy to deform 
(=~add parameters to)
our model viewed as a probabilistic
object. However, most such deformations would
lack solvability properties of the 
original model based on Schur polynomials.
The reason is that 
the Schur polynomials
are algebraic objects, and 
algebraic structures 
(in contrast with probabilistic ones)
are usually very rigid.
Thus, to find meaningful
(solvable) deformations
of the model 
requires nontrivial 
algebraic work.

Historically, first two different one-parameter 
deformations of the Schur polynomials
were suggested: around 1960
by algebraists Philip Hall and 
D.E.~Littlewood,\footnote{This is not 
the most famous mathematician with this last name,
that would be J.E.~Littlewood.}
and around 1970 by 
a statistician Henry Jack.

The \emph{Hall--Littlewood polynomials} naturally arose in 
finite group theory and were later
shown to be indispensable in representation
theory of $GL(n)$
over finite and $p$-adic fields.

The \emph{Jack polynomials}
extrapolated the so-called zonal spherical functions
arising in harmonic analysis on
Riemannian symmetric spaces 
from three
distinguished parameter values
that correspond to spaces over $\R,\C$, and $\mathbb{H}$.
They are also known as eigenfunctions
of the trigonometric Calogero--Sutherland integrable system.

In mid-1980's, in a remarkable development
Ian Macdonald united the two deformations
into a two-parameter deformation
known as \emph{Macdonald polynomials}.
The two parameters are traditionally denoted as $q$
and $t$. We will soon set $t$ to $0$, 
so it will not interfere with the time variable
in our Markov processes. 
The Hall--Littlewood polynomials arise 
when $q=0$, and the Jack polynomials correspond
to the limit regime $t=q^{\theta}\to1$, where $\theta>0$.
Schur polynomials correspond to $q=t$. 
Other significant values are: 
Schur's Q-functions (for $q=0$, $t=-1$);
monomial symmetric functions ($q=0$, $t=1$);
and (the most important for us)
$q$-Whittaker functions arising for $t=0$.


\subsection{Definition of Macdonald polynomials} 
\label{sub:definition_of_macdonald_polynomials}

The shortest way\footnote{The 
exposition below is very brief; a much more detailed one can 
be found in \cite[Ch. VI]{Macdonald1995}.} 
to define Macdonald polynomials
is to say that these are elements
of $\mathbb{Q}(q,t)[x_1,\ldots,x_N]^{S(N)}$
(this is the algebra of symmetric polynomials
in variables $x_1,\ldots,x_N$ whose 
coefficients are rational functions in $q$ and $t$),
that
diagonalize the following
first order $q$-difference operator:
\begin{align}\label{Macdonald_difference_operator}
	\begin{array}{rcl}
		\D^{(1)}&=&\displaystyle\sum_{i=1}^{N}
		\left[\prod_{1\le i<j\le N}\frac{1}{x_i-x_j}
		\T_{t,x_i}
		\prod_{1\le i<j\le N}({x_i-x_j})
		\right]\T_{q,x_i}\\&=&\displaystyle
		\sum_{i=1}^{N}\prod_{j\ne i}\frac{tx_i-x_j}{x_i-x_j}
		\T_{q,x_i},
	\end{array}
\end{align}
where, as before, $(\T_{q,x}f)(x)=f(qx)$.
It is immediately recognized 
as a deformation of the $q=t$ operator
\eqref{D1_q}
from \S \ref{ssub:general_recipe}.

The operator $\D^{(1)}$ from \eqref{Macdonald_difference_operator}
is called the \emph{first Macdonald difference operator}.
There are also higher order ones,
\begin{align}\label{k_Macdonald}
	\D^{(k)}=\sum_{\scriptstyle I\subset\{1,2,\ldots,N\}\atop
	\scriptstyle |I|=k}
	\prod_{\scriptstyle i\in I\atop
	\scriptstyle j\notin I}\frac{tx_i-x_j}{x_i-x_j}
	\prod_{i\in I}\T_{q,x_i}.
\end{align}
The operators $\D^{(k)}$ are diagonalized 
by the same polynomial basis \cite[Ch. VI]{Macdonald1995}. 

As Schur polynomials, the Macdonald polynomials
in $N$ variables are paramet\-rized by 
$\la=(\la_1\ge \ldots\ge\la_N)$.
We denote the (\emph{monic}, i.e., with 
coefficient 1 of the lexicographically largest monomial, 
which is $x_1^{\la_1}x_2^{\la_2}\ldots x_N^{\la_N}$) 
Macdonald
polynomials by $P_\la$. They satisfy
\begin{align}\label{Macdonald_eigenrelation}
	\D^{(k)}P_\la=e_k(q^{\la_1}t^{N-1},q^{\la_2}t^{N-2},
	\ldots,q^{\la_N})P_\la,
\end{align}
where 
\begin{align*}
	e_k(y_1,\ldots,y_N)=
	\sum_{1\le i_1<i_2< \ldots<i_k\le N}
	y_{i_1}y_{i_2} \ldots y_{i_k}
\end{align*}
are the elementary symmetric polynomials.
The eigenrelation \eqref{Macdonald_eigenrelation}
for $k=1$ and $q=t$
was used in \S \ref{sec:lozenge_tilings_and_representation_theory}
to compute the density function of the vertical lozenges.


\subsection{$q$-Whittaker facts} 
\label{sub:_q_whittaker_facts}

Developing the (beautiful)
theory of Macdonald polynomials
requires significant efforts, and we will not pursue
this here. 
An excellent resource is the Macdonald's 
book \cite{Macdonald1995}. Instead, we 
will focus on the $q$-Whittaker ($t=0$)
case, where, for a story parallel to the 
Schur case 
(\S\S \ref{sec:lozenge_tilings_and_representation_theory}--\ref{sec:markov_dynamics}), we need the following facts.

\begin{proposition}[$q$-analogue of Lemma 
\ref{lemma:Schur_branching}]\label{prop:fact1_qWhit}
	For any $\la=(\la_1\ge \ldots\ge\la_N)\in\Z^{N}$,
	we have
	\begin{align*}
		P_\la(x_1,\ldots,x_N)=
		\sum_{\mu\colon\mu\prec\la}
		P_\mu(x_1,\ldots,x_{N-1})
		\,\frac{\prod_{i=1}^{N-1}(\la_i-\la_{i+1})!_q}
		{\prod_{i=1}^{N-1}
		(\la_i-\mu_i)!_{q}(\mu_i-\la_{i+1})!_q}\,
		x_N^{|\la|-|\mu|},
	\end{align*}
	where $k!_q=\frac{(1-q)(1-q^{2})\ldots(1-q^{k})}{(1-q)^{k}}$
	is the $q$-analogue of the factorial.
\end{proposition}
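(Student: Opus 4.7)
The plan is to derive this identity by specializing the general Macdonald branching rule (see \cite[Ch.~VI, \S7]{Macdonald1995})
\begin{equation*}
P_\la(x_1, \ldots, x_N) = \sum_{\mu \prec \la} \psi_{\la/\mu}(q,t)\, P_\mu(x_1, \ldots, x_{N-1})\, x_N^{|\la|-|\mu|}
\end{equation*}
to $t = 0$. Here $\psi_{\la/\mu}(q,t)$ is Macdonald's explicit branching coefficient, given by a product over certain boxes of the Young diagram of $\la$ whose factors have the form $(1 - q^{a} t^{l+1})/(1 - q^{a+1} t^{l})$ in terms of the arm-length $a$ and leg-length $l$ of the box with respect to $\la$ or $\mu$.

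First, I would check that the two sides of the statement have matching summation index sets and matching homogeneity. The condition $\mu \prec \la$ on $\la \in \Z^N$ and $\mu \in \Z^{N-1}$ is precisely the condition that $\la/\mu$ is a horizontal strip (the only case in which $\psi_{\la/\mu}$ is nonzero), and the power of $x_N$ is forced since $P_\la$ and $P_\mu$ are homogeneous of degrees $|\la|$ and $|\mu|$ respectively.

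The main step is to evaluate $\psi_{\la/\mu}(q,t)$ at $t = 0$. In each elementary factor $(1 - q^{a}t^{l+1})/(1 - q^{a+1}t^{l})$, the numerator at $t=0$ is $1$ (since $l+1 \ge 1$) and the denominator at $t=0$ is $1$ unless $l = 0$, in which case it equals $1 - q^{a+1}$. Hence only the boxes with leg-length zero (those occupying the bottom row of their column inside $\la$) contribute nontrivially. Grouping the surviving factors column by column for $i = 1, \ldots, N-1$ and reading off the surviving arm-lengths in terms of $\la_i, \la_{i+1}, \mu_i$, the $i$-th column collapses to the $q$-binomial coefficient
\begin{equation*}
\binom{\la_i - \la_{i+1}}{\la_i - \mu_i}_q = \frac{(\la_i - \la_{i+1})!_q}{(\la_i - \mu_i)!_q\,(\mu_i - \la_{i+1})!_q},
\end{equation*}
which matches the claim. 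The main obstacle is this bookkeeping: although elementary, it requires care to identify precisely which boxes contribute at $t=0$ and to track the arm-lengths through the numerator and denominator of Macdonald's formula without missing a cancellation.

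As an alternative route that bypasses Macdonald's general formula, one can argue directly using the characterization of $P_\la$ as the unique eigenfunction of $\D^{(1)}$ with leading monomial $x^\la$ and with eigenvalue given by \eqref{Macdonald_eigenrelation}. Symmetry in $x_1, \ldots, x_{N-1}$ and homogeneity in $x_N$ force an ansatz $P_\la(x_1, \ldots, x_N) = \sum_\mu c_{\la,\mu}\, P_\mu(x_1, \ldots, x_{N-1})\, x_N^{|\la|-|\mu|}$, and applying $\D^{(1)}|_{t=0}$ from \eqref{Macdonald_difference_operator} to both sides, then matching coefficients against the known eigenvalues of the $P_\mu$'s, yields a recursion whose unique solution is the claimed product of $q$-binomial coefficients.
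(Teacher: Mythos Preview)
The paper does not prove this proposition; it is stated as a fact needed for the $q$-Whittaker specialization, with Macdonald's book \cite{Macdonald1995} given as the reference. Your approach of specializing the general Macdonald branching coefficient $\psi_{\la/\mu}(q,t)$ at $t=0$ is precisely how one extracts the result from that reference, and the outcome is correct. One small wording issue: in your bookkeeping you write ``column by column for $i=1,\ldots,N-1$'', but $i$ indexes \emph{rows} of the Young diagram; the surviving (leg-length zero) boxes in row $i$ are those in columns $\la_{i+1}+1,\ldots,\mu_i$ (for the $b_\mu$ factors) and $\mu_{i+1}+1,\ldots,\la_i$ (for the $b_\la$ factors), and collecting their arm-lengths gives exactly the $q$-binomial $\binom{\la_i-\la_{i+1}}{\mu_i-\la_{i+1}}_q$. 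Your alternative route via the eigenfunction characterization is also viable but not needed here.
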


We thus see that the interlacing structure
(Gelfand--Tsetlin schemes \eqref{GT_scheme})
remains intact, but the 
Gibbs property is $q$-deformed.
We will now say that a probability measure
on Gelfand--Tsetlin schemes 
$\boldsymbol\la=(\la^{(1)}\prec \ldots\prec\la^{(N)})$
is \emph{Gibbs} if for any 
$2\le h\le N$, 
\begin{align*}&
	\Prob\big\{
	\la^{(1)},\ldots,\la^{(h-1)}\,|\,\la^{(h)}
	\big\}\\&\hspace{60pt}=
	\La^{h}_{h-1}(\la^{(h)},\la^{(h-1)})
	\La^{h-1}_{h-2}(\la^{(h-1)},\la^{(h-2)})
	\ldots
	\La^{2}_{1}(\la^{(2)},\la^{(1)}),
\end{align*}
with the $q$-deformed stochastic links 
\begin{align*}
	\La^{h}_{h-1}(\la,\mu)=
	\frac{P_\mu(1,\ldots,1)}
	{P_\la(1,\ldots,1)}\cdot 
	\frac{\prod_{i=1}^{N-1}(\la_i-\la_{i+1})!_q}
	{\prod_{i=1}^{N-1}
	(\la_i-\mu_i)!_{q}(\mu_i-\la_{i+1})!_q}.
\end{align*}
Recall that in the Schur case 
we had simply $\dfrac{s_\mu(1,\ldots,1)}
{s_\la(1,\ldots,1)}$ 
which was then explicitly evaluated in 
\eqref{links}.

\begin{proposition}[$q$-analogue of the limit $ab/c\to t$ and 
$L^{(h)}_{\mathrm{Poisson}}$ of \S \ref{sub:dyson_brownian_motion}]
\label{prop:fact2_qWhit}
	If we define the coefficients 
	$\Prob_{t,h}\{\mu\}$ by expanding\footnote{Such a decomposition exists 
	as $\{P_\mu\}_{\mu_1\ge \ldots\ge\mu_h\ge0}$
	form a basis in the linear space of symmetric polynomials
	in $x_1,\ldots,x_h$. 
	Thus, \eqref{q-Whit_expansion} is a variant of the 
	Taylor expansion.}
	\begin{align}\label{q-Whit_expansion}
		\prod_{i=1}^{h}e^{t(x_i-1)}
		=\sum_{\mu_1\ge \ldots\ge\mu_h}	
		\Prob_{t,h}\{\mu\}\,
		\frac{P_\mu(x_1,\ldots,x_h)}{P_\mu(1,\ldots,1)},
	\end{align}
	then $\Prob_{t,h}\{\mu\}\ge0$ 
	for any $\mu$. Moreover, these probability
	measures on $\{\mu_1\ge \ldots\ge\mu_h\ge0\}$
	are time $t$ distributions 
	of a jump Markov process
	with jump rates 
	\begin{align}\label{q-Whit_generator}
		L^{(h;q)}_{\mathrm{Poisson}}
		(\la\mapsto\la+\de_j)=
		\frac{P_{\la+\de_j}(1,\ldots,1)}
		{P_{\la}(1,\ldots,1)}
		\cdot (1-q^{\la_{j-1}-\la_j}),\qquad
		1\le j\le h,
	\end{align}
	where for $j=1$ the last factor is omitted.
\end{proposition}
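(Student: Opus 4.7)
The plan is to derive the Kolmogorov forward equation for $\Prob_{t,h}\{\mu\}$ by differentiating the generating identity \eqref{q-Whit_expansion} in $t$ and matching coefficients against the linearly independent family $\{\hat P_\mu\}$, where $\hat P_\mu(x) := P_\mu(x)/P_\mu(1,\ldots,1)$. The time derivative of the left-hand side of \eqref{q-Whit_expansion} produces the prefactor $\sum_{i=1}^{h} x_i - h = p_1 - h$, so the core task reduces to understanding how multiplication by $p_1$ acts on the normalized basis $\{\hat P_\mu\}$.

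The key algebraic input is the Pieri rule for Macdonald polynomials specialized at $t=0$, which I would extract from the general formula in \cite[Ch.~VI]{Macdonald1995}:
$$p_1 \cdot P_\mu = \sum_{j=1}^{h} \bigl(1 - q^{\mu_{j-1} - \mu_j}\bigr)\, P_{\mu + \de_j},$$
with the convention $\mu_0 := +\infty$ (so the $j=1$ coefficient is $1$) and with the understanding that the coefficient automatically vanishes precisely when $\mu + \de_j$ fails to be weakly decreasing (i.e., when $\mu_{j-1} = \mu_j$). Normalizing by $P_\mu(1,\ldots,1)$ gives $p_1 \hat P_\mu = \sum_j a_j(\mu) \hat P_{\mu + \de_j}$, where
$$a_j(\mu) := \bigl(1 - q^{\mu_{j-1} - \mu_j}\bigr)\,\frac{P_{\mu + \de_j}(1,\ldots,1)}{P_\mu(1,\ldots,1)}$$
is the desired jump rate. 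Setting $x_1 = \cdots = x_h = 1$ in the Pieri rule itself yields the ``conservation law'' $\sum_j a_j(\mu) = h$.

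Substituting $p_1 \hat P_\mu = \sum_j a_j(\mu)\hat P_{\mu+\de_j}$ and $h\,\hat P_\mu = \sum_j a_j(\mu)\hat P_\mu$ into the differentiated identity and matching coefficients of each $\hat P_\mu$ produces
$$\frac{d\,\Prob_{t,h}\{\mu\}}{dt} = \sum_j a_j(\mu - \de_j)\,\Prob_{t,h}\{\mu - \de_j\} - \Prob_{t,h}\{\mu\} \sum_j a_j(\mu),$$
which is exactly the forward Kolmogorov equation for the jump process with rates $L^{(h;q)}_{\mathrm{Poisson}}(\mu \to \mu + \de_j) = a_j(\mu)$. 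The initial condition $\Prob_{0,h}\{\mu\} = \delta_{\mu,(0,\ldots,0)}$ is read off by setting $t = 0$ in \eqref{q-Whit_expansion}, and uniqueness of the Cauchy problem identifies $\Prob_{t,h}$ with the time-$t$ law of this process. Nonnegativity then comes \emph{for free}: the rates $a_j(\mu)$ are manifestly nonnegative (since $0 < q < 1$ makes $1 - q^{\mu_{j-1}-\mu_j} \ge 0$, and $P_\mu(1,\ldots,1) > 0$ via the known explicit product formula), so a jump process with these rates preserves probability. Totality $\sum_\mu \Prob_{t,h}\{\mu\} = 1$ is already visible by setting $x_i = 1$ in \eqref{q-Whit_expansion}. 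The main obstacle is purely algebraic --- pinning down the Pieri coefficient $1 - q^{\mu_{j-1}-\mu_j}$ as the $t \to 0$ limit of Macdonald's general Pieri rule; once this formula is in hand, the remainder of the argument is a formal manipulation that mirrors step by step the Schur ($q=t$) derivation of \S\ref{sub:averaging_observables}.
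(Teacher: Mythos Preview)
Your argument is correct and is essentially the one carried out in the reference the paper cites (\cite[\S2.3]{BorodinCorwin2011Macdonald}); the paper itself does not supply a self-contained proof here, only the pointer. The Pieri rule you quote for $q$-Whittaker polynomials is indeed the $t\to 0$ specialization of Macdonald's $e_1$-Pieri rule, and your observation that the total exit rate $\sum_j a_j(\mu)=h$ is constant (obtained by evaluating the Pieri identity at $x_i\equiv 1$) also furnishes the non-explosion needed to justify the uniqueness step you invoke for the forward equation.
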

A proof 
(in the more general $(q,t)$-setting) can be
found in \cite[\S2.3]{BorodinCorwin2011Macdonald}.
In the Schur case, \eqref{q-Whit_generator}
reduces to the ratio of Vandermonde determinants,
cf. \eqref{Poisson_generator} above.


\subsection{$q$-deformed Markov dynamics} 
\label{sub:_q_deformed_markov_dynamics}

Propositions \ref{prop:fact1_qWhit} and \ref{prop:fact2_qWhit}
give us sufficient data to 
run the same search (as in \S \ref{sec:markov_dynamics})
for nearest neighbor Markov
processes which preserve 
the Gibbs measures,
and which on each level are described by 
$L^{(h;q)}_{\mathrm{Poisson}}$ \eqref{q-Whit_generator}.
This immediately leads to the following $q$-analogue
of Theorem \ref{thm:Schur_system_of_eqns}:
\begin{theorem}[\cite{BorodinPetrov2013NN}]\label{thm:q_dyn_main_thm}
	For any $\la^{(h-1)}\prec\la^{(h)}$, define
	\begin{align*}
		T_i(\la^{(h-1)},\la^{(h)})&:=
		\frac{\big(1-q^{\la^{(h-1)}_i-\la^{(h)}_{i+1}}\big)
		\big(1-q^{\la^{(h-1)}_{i-1}-\la^{(h-1)}_{i}+1}\big)}
		{\big(1-q^{\la^{(h)}_{i}-\la^{(h-1)}_{i}+1}\big)},
		\\
		S_j(\la^{(h-1)},\la^{(h)})&:=
		\frac{\big(1-q^{\la^{(h-1)}_{j-1}-\la^{(h)}_{j}}\big)
		\big(1-q^{\la^{(h)}_{j}-\la^{(h)}_{j+1}+1}\big)}
		{\big(1-q^{\la^{(h)}_{j}-\la^{(h-1)}_{j}+1}\big)},
	\end{align*}
	with $1\le i\le k-1$, $1\le j\le k$. If there
	is a $\la^{(\,\cdot\,)}_{\,\cdot\,}$ whose indices make no sense,
	then the corresponding factor is omitted:
	\begin{align*}
		T_1(\la^{(h-1)},\la^{(h)})&
		=\frac{1-q^{\la^{(h-1)}_1-\la^{(h)}_2}}
		{1-q^{\la^{(h)}_1-\la^{(h-1)}_1+1}},
		\quad
		S_1(\la^{(h-1)},\la^{(h)})
		=\frac{1-q^{\la^{(h)}_1-\la^{(h)}_2+1}}
		{1-q^{\la^{(h)}_1-\la^{(h-1)}_1+1}},\\&
		\hspace{40pt}
		S_h(\la^{(h-1)},\la^{(h)})
		=1-q^{\la^{(h-1)}_{h-1}-\la^{(h)}_h}.
	\end{align*}
	Let 
	$\big\{j_1+1<j_2+1<\ldots<j_\kappa+1\big\}$
	be all the indices such that 
	particle $\la^{(h)}_{j_m+1}$ 
	is free to move, i.e., $\la^{(h)}_{j_m+1}<\la^{(h-1)}_{m}$.
	Then
	\begin{align*}
		T_{j_{m+1}}r_{j_{m+1}}+T_{j_m}l_{j_m}+w_{j_m+1}=S_{j_m+1},
		\qquad 1\le m\le\kappa,
	\end{align*}
	with agreement $r_{j_{\kappa+1}}=l_{j_0}=0$,
	and also $T_{h}=0$.
	Solving these equations for all 
	pairs $\la^{(h-1)}\prec\la^{(h)}$
	is equivalent
	to constructing nearest neighbor Markov dynamics
	satisfying the $q$-versions of conditions
	{\rm{}(I)--(III)\/} of \S \ref{sub:an_example_of_a_two_dimensional_dynamics}.
\end{theorem}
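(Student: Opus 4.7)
The plan is to run the same two-ways-to-compute-a-transition-rate argument as in the proof of Proposition \ref{prop:apart}, now in the $q$-deformed setting provided by Proposition \ref{prop:fact2_qWhit}. Fix $h\ge2$ and a generic interlacing pair $\la^{(h-1)}\prec\la^{(h)}$, and consider the Gibbs measure on the first $h$ levels whose projection to level $h$ is concentrated at $\la^{(h)}$. For each $m$ with $1\le m\le\kappa$, I would compute the rate of the transition $\la^{(h)}\mapsto\la^{(h)}+\de_{j_m+1}$ in two different ways and equate them.

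On the one hand, by property (III) the projection to level $h$ must evolve according to $L^{(h;q)}_{\mathrm{Poisson}}$ of \eqref{q-Whit_generator}, so the rate equals $\tfrac{P_{\la^{(h)}+\de_{j_m+1}}(1,\dots,1)}{P_{\la^{(h)}}(1,\dots,1)}(1-q^{\la^{(h)}_{j_m}-\la^{(h)}_{j_m+1}})$. On the other hand, under the 2D dynamics the jump at position $j_m+1$ of level $h$ is produced by exactly three mutually exclusive events: (a) an independent jump of $\la^{(h)}_{j_m+1}$ at rate $w_{j_m+1}$; (b) a jump $\la^{(h-1)}\mapsto\la^{(h-1)}+\de_{j_m}$ whose own rate on level $h{-}1$ equals the corresponding term of $L^{(h-1;q)}_{\mathrm{Poisson}}$, followed by left-propagation with probability $l_{j_m}$; (c) a jump $\la^{(h-1)}\mapsto\la^{(h-1)}+\de_{j_{m+1}}$ with its own level-$(h{-}1)$ rate followed by right-propagation with probability $r_{j_{m+1}}$. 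Using the Gibbs conditional law $\La^{h}_{h-1}(\la^{(h)},\la^{(h-1)})$ from Proposition \ref{prop:fact1_qWhit}, all four rate expressions are expressible as ratios of $q$-factorials, and the second computation (divided by the common Gibbs weight) gives the left-hand side of \eqref{linear_eqn_j_m} with certain explicit prefactors attached to $r$, $l$ and $w$.

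Matching the two computations is then a direct algebraic identification: the ratio of jump rates and Gibbs weights attached to the level-$(h{-}1)$ jump at position $i$ telescopes into the factor $T_i(\la^{(h-1)},\la^{(h)})$ written in the statement (the numerator factors $(1-q^{\la^{(h-1)}_{i-1}-\la^{(h-1)}_i+1})$ coming from the level-$(h{-}1)$ rate via \eqref{q-Whit_generator}, the other factors from the $q$-factorial ratios in $\La^{h}_{h-1}$); the right-hand side rate from $L^{(h;q)}_{\mathrm{Poisson}}$, also divided by $\La^{h}_{h-1}$, produces $S_{j_m+1}$. The independent jump rate $w_{j_m+1}$ enters with coefficient $1$ as it involves no level-$(h{-}1)$ transition. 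This yields precisely \eqref{linear_eqn_j_m}. For the converse direction, any nonnegative solution of the system automatically satisfies (I) by the nearest-neighbor construction, and reversing the above rate computation shows that both (II) (Gibbs preservation) and (III) (the induced marginal evolution is $L^{(h;q)}_{\mathrm{Poisson}}$) hold.

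The main obstacle will be the careful bookkeeping of the degenerate interlacing configurations in which some $\la^{(h)}_{j+1}$ coincides with $\la^{(h-1)}_j$ (so that $j+1$ is not among the $j_m+1$). In these cases the corresponding $S$ or $T$ factor develops a vanishing numerator, certain $l$'s and $r$'s become meaningless because the triggering $\la^{(h-1)}-\de$ state does not exist, and the forced-move rule $\la^{(h-1)}_j=\la^{(h)}_j\Rightarrow\la^{(h)}_j$ moves with $\la^{(h-1)}_j$ must be activated. The conventions $r_{j_{\kappa+1}}=l_{j_0}=0$ and $T_h=0$, together with the forced moves, are precisely what is needed to keep the two-way computation consistent at these boundaries; verifying this amounts to checking that the vanishing factors in $S_{j_m+1}$ and $T_{j_{m\pm1}}$ correctly cancel the missing terms on the right-hand side of the rate balance, exactly as in the proof of Theorem \ref{thm:Schur_system_of_eqns}.
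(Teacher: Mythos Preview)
Your approach is correct and is exactly the one the paper indicates: Theorem~\ref{thm:q_dyn_main_thm} is stated with a citation to \cite{BorodinPetrov2013NN} and no proof, while the Schur-case analogue (Theorem~\ref{thm:Schur_system_of_eqns}) is proved by ``Similar to Proposition~\ref{prop:apart}, cf.~\cite{BorodinPetrov2013NN}''---precisely the two-ways-to-compute-a-rate argument you describe, with the $q$-deformed stochastic links and the generator $L^{(h;q)}_{\mathrm{Poisson}}$ replacing their Schur counterparts. Your identification of the bookkeeping for blocked particles and the role of the conventions $r_{j_{\kappa+1}}=l_{j_0}=0$, $T_h=0$ is also on target.
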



\subsection{Examples of $q$-deformed two-dimensional dynamics} 
\label{sub:examples_of_q_deformed_two_dimensional_dynamics}

Using Theorem \ref{thm:q_dyn_main_thm}, 
we can now explore the same three examples as in 
\S \ref{sub:examples_of_two}:

\medskip

\noindent
\textbf{Example 1.}
We enforce the almost sure move propagation
(i.e., $r_j+l_j\equiv1$),
and also 
\begin{align*}
	w_j=\begin{cases}
		1,&j=1;\\
		0,&\mbox{otherwise}.
	\end{cases}
\end{align*}
This gives a unique solution
\begin{align*}
	r_j=\frac{S_1+\ldots+S_{j}-T_{1}-\ldots-T_{j-1}}{T_j},
	\qquad l_j=1-r_j,
\end{align*}
for all $j$ such that $\la^{(h)}_{j+1}$ is free.
In fact, this expression telescopes to give
\begin{align*}
	r_j=\begin{cases}
		q^{\la^{(h)}_{1}-\la^{(h-1)}_1},& j=1;\\
		q^{\la^{(h)}_{j}-\la^{(h-1)}_{j}}
		\dfrac{1-q^{\la^{(h-1)}_{j-1}-\la^{(h)}_j}}
		{1-q^{\la^{(h-1)}_{j-1}-\la^{(h-1)}_j}},&
		2\le j\le h.
	\end{cases}
\end{align*}
We observe that for $0\le q<1$, all the
probabilities $r_j$ and $l_j$ are nonnegative,
and the projection to the rightmost particles
$\{\la^{(h)}_{1}\}_{h=1}^{N}$
is \emph{Markovian}.
In the shifted variables
$y_h=\la^{(h)}_{1}+h$, 
it can be described as follows:
Each particle jumps to the 
right by 1
independently with Poisson clock of rate 1.
If the $j$th particle moved, it triggers the move of $(j+1)$st
one with probability $q^{\gap}$, where
$\gap$
is the number of empty 
spots in front of the $j$th particle before the move 
(which in its turn may trigger the move of the $(j+2)$nd particle, etc.).
Note that the probability $q^{\gap}$
is~1 if $\gap=0$. We call this particle system
the \emph{$q$-PushTASEP}, it was first introduced in \cite{BorodinPetrov2013NN}.
Its generalization (called \emph{$q$-PushASEP})
with particles moving in both directions
can be found in \cite{CorwinPetrov2013}.

\medskip

\noindent
\textbf{Example 2.}
Now we again enforce $l_j+r_j\equiv 1$, and 
\begin{align*}
	w_j=\begin{cases}
		1,&j=h;\\
		0,&\mbox{otherwise}.
	\end{cases}
\end{align*}
This gives 
\begin{align*}
	r_j=1+\frac{q^{\la^{(h-1)}_{j}-\la^{(h)}_{j+1}}}{T_j},
	\qquad
	l_j=1-r_j,
\end{align*}
for all $j$ such that $\la^{(h)}_{j+1}$ is free.
Obviously, this gives negative probabilities, 
and we do not pursue this example further.

\medskip

\noindent
\textbf{Example 3.}
Here we enforce $l_j=r_j\equiv0$.
This clearly gives $w_j=S_j$, and 
for $0\le q<1$
this is a well-defined
Markov process without
long-range interactions.
It was first constructed in 
\cite{BorodinCorwin2011Macdonald}, and it is closely
related to the
$q$-Boson stochastic particle system of 
\cite{SasamotoWadati1998}, 
see also
\cite{BorodinCorwinSasamoto2012},
\cite{BorodinCorwinPetrovSasamoto2013}.
While the projection of this process to the rightmost
particles $\{\la^{(h)}_{1}\}_{h=1}^{N}$
does not appear to be Markovian (because 
$S_1$ depends on $\la_2^{(h)}$), the projection 
to the leftmost particles 
$\{\la^{(h)}_{h}\}_{h=1}^{N}$
\emph{is} Markovian.
In the shifted coordinates $y_h=\la^{(h)}_{h}-h$
it can be described as follows:
Each particle jumps to the right by 1 independently
of the others with rate $1-q^{\gap}$, where 
$\gap$ is (as before)
the number of empty spaces 
in front of $y_h$ before the jump.
Note that this rate vanishes when $\gap=0$,
which correspond to a TASEP-like blocking of the move.
We call this interacting particle system
the \emph{$q$-TASEP}. 

\medskip

Obviously, as $q\to0$, the 
$q$-PushTASEP turns into the usual PushTASEP, and $q$-TASEP
becomes the usual TASEP. 


\subsection{Conclusion} 
\label{sub:conclusion}

We have thus obtained $q$-deformations of the random
growth models from the Schur case.
Our next task will be 
to investigate their asymptotic
behavior at large times.



\section{Asymptotics of $q$-deformed growth models} 
\label{sec:asymptotics_of_q_deformed_growth_models}

Our main tool in studying asymptotics will be the Macdonald
difference operators 
(\S \ref{sub:definition_of_macdonald_polynomials}).

\subsection{A contour integral formula for expectations of observables} 
\label{sub:a_contour_integral_formula_for_expectations_of_observables}

By $\Prob_{t,h}$ we mean the measure defined 
in Proposition \ref{prop:fact2_qWhit}.
\begin{proposition}
	For any $1\le h\le N$, 
	\begin{align}
	\begin{array}{ll}
		&\displaystyle
		\sum_{\la_1\ge \la_2\ge \ldots\ge \la_N\ge0}
		q^{\la_N+\ldots+\la_{N-k+1}}
		\Prob_{t,N}\{\la\}
		\\&\displaystyle\hspace{25pt}=
		\frac{(-1)^{\frac{k(k-1)}{2}}}
		{(2\pi\i)^{k}k!}
		\oint \ldots\oint 
		\prod_{1\le A<B\le k}
		(z_A-z_B)^{2}
		\prod_{j=1}^{k}\frac{e^{(q-1)tz_j}}{(1-z_j)^{N}}
		\frac{dz_j}{z_j^{k}},
	\end{array}
	\label{qWhit_integral_formula}
	\end{align}
	where all the integrals 
	are taken over small positively oriented 
	closed contours around 1.
\end{proposition}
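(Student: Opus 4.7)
Plan. The strategy is to apply the $k$th Macdonald difference operator $\D^{(k)}$ from \eqref{k_Macdonald} to the defining identity \eqref{q-Whit_expansion}, then to exploit the eigenrelation \eqref{Macdonald_eigenrelation} and convert the resulting expression into a contour integral. Throughout, I denote the Macdonald parameter by $t_M$ (distinct from the time parameter $t$), with the $q$-Whittaker case corresponding to $t_M\to 0$. First I would apply $\D^{(k)}$ to both sides of \eqref{q-Whit_expansion}. Using \eqref{Macdonald_eigenrelation} and specializing $x_1 = \cdots = x_N = 1$, the right-hand side becomes
$$\sum_\mu e_k\bigl(q^{\mu_1} t_M^{N-1}, q^{\mu_2} t_M^{N-2}, \ldots, q^{\mu_N}\bigr)\, \Prob_{t,N}\{\mu\}.$$
Among the $\binom{N}{k}$ terms making up $e_k$, the one carrying the lowest power of $t_M$, namely $t_M^{k(k-1)/2}$, corresponds to choosing the subset $\{N-k+1, \ldots, N\}$ and contributes $q^{\mu_N + \mu_{N-1} + \ldots + \mu_{N-k+1}}$; every other term carries a strictly higher power of $t_M$. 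Dividing by $t_M^{k(k-1)/2}$ (or equivalently re-normalizing $\D^{(k)}$) and sending $t_M \to 0$ reproduces exactly the left-hand side of the target formula.

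Second, I would rewrite the operator side, $t_M^{-k(k-1)/2}(\D^{(k)} F)(1, \ldots, 1)$ with $F(x) = \prod_i e^{t(x_i - 1)}$, as a multi-contour integral. Since the shifts $x_i \mapsto q x_i$ for $i \in I$ turn $F$ into $F(x) \prod_{i \in I} e^{t(q - 1)x_i}$, and the specialization $x = (1, \ldots, 1)$ creates $0/0$ ambiguities in the coefficients $\prod_{i \in I, j \notin I} (t_M x_i - x_j)/(x_i - x_j)$, one needs a $k$-variable generalization of Lemma \ref{lemma:contour_integral}: the sum over $k$-subsets can be replaced by the $k$-fold integral
$$\frac{1}{(2\pi\i)^k k!\,(1-t_M)^k} \oint \cdots \oint \prod_{A\neq B} \frac{z_A-z_B}{z_A - t_M z_B}\prod_{A, i} \frac{t_M z_A - x_i}{z_A - x_i}\prod_A \frac{e^{t(q-1)z_A}\,dz_A}{z_A},$$
over small contours enclosing $x_1, \ldots, x_N$; the original sum arises as the collection of residues at $z_A = x_{i_A}$ for distinct indices $i_A$, with the $k!$ compensating for permutations. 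Specializing $x = (1, \ldots, 1)$ collapses all poles onto $z_A = 1$ and produces the factor $\bigl((t_M z_A - 1)/(z_A - 1)\bigr)^N$; then the $t_M \to 0$ limit together with the $t_M^{-k(k-1)/2}$ renormalization extracts the limiting integrand and (after converting $\prod_{A \neq B}(z_A - z_B) = (-1)^{k(k-1)/2}\prod_{A<B}(z_A-z_B)^2$ and $\prod_A(-1)^N/(z_A-1)^N = \prod_A 1/(1-z_A)^N$) yields the stated formula with the correct sign $(-1)^{k(k-1)/2}$.

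The main obstacle is the multi-variable contour integral identity above. The $k = 1$ case is precisely Lemma \ref{lemma:contour_integral}, but for general $k$ one must carefully check that no spurious residues from the factors $z_A - t_M z_B$ enter when the contours are deformed (which is ensured by the choice of contours and small $t_M$), and that the $k!$ combinatorial factor correctly accounts for the symmetry of the integrand. A secondary subtlety is tracking the exact power of $t_M$ in the normalization of $\D^{(k)}$ so that the $t_M \to 0$ limits on both sides match: one must verify that the contour integral, evaluated at $t_M$ small, already exhibits the correct leading $t_M^{k(k-1)/2}$ behavior to balance the eigenvalue side after the renormalization.
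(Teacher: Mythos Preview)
Your approach is essentially the same as the paper's: apply $\D^{(k)}$ (with the auxiliary Macdonald parameter $t_M\ne 0$) to the expansion \eqref{q-Whit_expansion}, use the eigenrelation \eqref{Macdonald_eigenrelation} on the right-hand side and extract the lowest power of $t_M$, convert the left-hand side to a $k$-fold contour integral generalizing Lemma~\ref{lemma:contour_integral}, then set $x_j=1$ and send $t_M\to 0$. The paper packages the cross-term in the integrand as the Cauchy determinant $\det\bigl[1/(t_M z_A - z_B)\bigr]_{A,B=1}^{k}$ (citing \cite[\S2.2.3]{BorodinCorwin2011Macdonald} for details) rather than your product form $\prod_{A\ne B}(z_A-z_B)/(z_A-t_M z_B)$; the two differ by an overall factor involving $(-1)^k$ and $t_M^{k(k-1)/2}$, which is exactly the normalization subtlety you flag at the end, and once that factor is tracked correctly the two arguments coincide.
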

\begin{proof}
	In the proof we need to use the 
	second Macdonald parameter $t\ne 0$.
	For this, let us in this proof 
	denote the time 
	variable by $\tau$ to avoid the confusion.

	We apply the $k$th order Macdonald operator 
	$\D^{(k)}$ \eqref{k_Macdonald} to the series expansion
	\eqref{q-Whit_expansion}
	defining our measures, which now looks as
	\begin{align}\label{expansion_with_tau}
		e^{\tau\sum_{i=1}^{N}(x_i-1)}
		=\sum_{\la_1\ge \ldots\ge\la_N\ge0}	
		\Prob_{\tau,h}\{\la\}\,
		\frac{P_\la(x_1,\ldots,x_h)}{P_\la(1,\ldots,1)}.
	\end{align}
	We then replace the sum in the left-hand side 
	by the residue expansion
	of the integral
	(see \cite[\S2.2.3]{BorodinCorwin2011Macdonald} for more detail)
	\begin{align*}&
		\frac{\D^{(k)}e^{\tau\sum_{i=1}^{N}(x_i-1)}}
		{e^{\tau\sum_{i=1}^{N}(x_i-1)}}
		=\frac{1}{(2\pi\i)^{k}k!}
		\oint \ldots\oint
		\frac{\prod\limits_{1\le A<B\le k}(tz_A-tz_B)(z_B-z_A)}
		{\prod\limits_{A,B=1}^{k}(tz_A-z_B)}
		\times\\&\hspace{180pt}\times\prod_{j=1}^{k}
		\left(
		\prod_{m=1}^{N}\frac{tz_j-x_m}{z_j-x_m}
		e^{(q-1)\tau z_j}dz_j
		\right),
	\end{align*}
	where the contours encircle $\{x_1,\ldots,x_N\}$
	and no other poles 
	(i.e., the residues are taken at
	$z_j=x_{m_j}$, $j=1,\ldots,k$, $1\le m_j\le N$).
	Note that in fact
	\begin{align*}
		\frac{\prod_{1\le A<B\le k}(tz_A-tz_B)(z_B-z_A)}
		{\prod_{A,B=1}^{k}(tz_A-z_B)}=
		\det\left[\frac{1}{tz_A-z_B}\right]_{A,B=1}^{k}
	\end{align*}
	via the Cauchy determinant formula.

	In the right-hand side of 
	\eqref{expansion_with_tau} we use the 
	eigenrelation 
	(see \S \ref{sub:definition_of_macdonald_polynomials}):
	\begin{align*}
		\D^{(k)}P_\la&
		=e_k(q^{\la_1}t^{N-1},q^{\la_2}t^{N-2},
		\ldots,q^{\la_N})P_\la
		\\&=
		\left(
		q^{\la_N+\la_{N-1}+\ldots+\la_{N-k+1}}t^{\frac{k(k-1)}2}
		{}+{}\text{higher powers of $t$}
		\right)P_\la.
	\end{align*}
	We then divide both sides of \eqref{expansion_with_tau}
	by $t^{\frac{k(k-1)}2}$,
	take the limit as $t\to0$, and also set $x_j=1$.
\end{proof}


There are two simple limit transitions 
that one can observe in the right-hand side of 
the contour integral formula
\eqref{qWhit_integral_formula}. We consider them in 
\S \ref{sub:gaussian_limit} and 
\S \ref{sub:polymer_limit} below.

\subsection{Gaussian limit} 
\label{sub:gaussian_limit}

The first limit regime is $q=e^{-\eps}\to1$, 
$t=\tau\eps^{-1}\to\infty$, $z_j$'s do not change.
Looking at the left-hand side of 
\eqref{qWhit_integral_formula}, which is 
$\E\left(q^{\la_N+\ldots+\la_{N-k+1}}\right)$, 
it is natural to expect that each $\la_j$
grows as $\eps^{-1}$ so that the 
quantities $q^{\la_j}$
have finite limits
(which may still be random variables).
Looking at higher powers of Macdonald
operators indeed reveals that this is a Gaussian limit:
$\eps\la_j$ has a law of large numbers with Gaussian
fluctuations 
of size $\eps^{-1/2}$, and the Markov dynamics
we constructed converge to Gaussian processes.
We do not pursue this limit regime here,
its detailed exposition will appear in
\cite{BorodinCorwinFerrari2014}.
Another, structurally similar appearance of Gaussian processes can be found in
\cite{BorodinGorin2013beta}.


\subsection{Polymer limit} 
\label{sub:polymer_limit}

The second limit is a bit more complicated.
We again take 
$q=e^{-\eps}\to1$, but now
$t=\tau\eps^{-2}\to\infty$
(i.e., we wait for a longer time than in the Gaussian limit of 
\S \ref{sub:gaussian_limit}).
Then to see a nontrivial limit, we have to 
take the $z_j$'s of distance $O(\eps)$
from 1: $z_j=1+\eps w_j$.
Then we have
\begin{gather*}
	e^{(q-1)t z_j}=e^{(-\eps+\eps^{2}/2-\ldots)\tau \eps^{-2}
	(1+\eps w_j)}
	=
	e^{-\tau \eps^{-1}}e^{\tau/2-\tau w_j},\\
	(1-z_j)^{N}=
	(-\eps w_j)^{N}=(-1)^{N}\eps^{N}w_j^{N},\\
	\prod_{1\le A<B\le k}
	(z_A-z_B)^{2}=
	\eps^{k(k-1)}\prod_{1\le A<B\le k}(w_A-w_B)^{2},\\
	\prod_{j=1}^{N}dz_j=\eps^{k}\prod_{j=1}^{N}dw_j,
\end{gather*}
and we see that the right-hand side of \eqref{qWhit_integral_formula}
becomes 
$e^{-\tau k\eps^{-1}}\eps^{k(k-1)-kN+k}$
times an asymptotically finite expression.
To figure out the limiting behavior of
$\la_N+\ldots+\la_{N-k+1}$, we now have to
take $\log_{q}$ of this expression, or take the 
natural logarithm and multiply by $-\eps^{-1}$.
This gives 
\begin{align*}
	\la_N+\ldots+\la_{N-k+1}
	\sim
	\tau k\eps^{-2}+(-kN+k^{2})\frac{\ln\eps^{-1}}{\eps}
	+\frac{R_{N,k}}{\eps},
\end{align*}
where the remainder $R_{N,k}$
is supposed to be a finite random variable.
Equivalently, 
\begin{align}\label{laj_polymer_scaling}
	\la_j=\tau\eps^{-2}-(N+1-2j)\frac{\ln\eps}{\eps}
	+\frac{T_{N,j}}{\eps},
	\qquad1\le j\le N,
\end{align}
with some limiting random
variables $T_{N,j}$.
(Note that at this moment this is simply a \emph{guess}!)

We can now test what is happening with our dynamics
under this conjectural scaling. For example, 
consider the $q$-PushTASEP. 
The asymptotics of the pushing probability is
\begin{align*}
	q^{\la^{(h)}_1-\la^{(h-1)}_1}\sim
	e^{-\eps\left(-\frac{\ln\eps}{\eps}+
	\frac{T_{h,1}-T_{h-1,1}}{\eps}\right)}	
	=\eps e^{T_{h-1,1}-T_{h,1}}.
\end{align*}
The increment of 
$\la^{(h)}_1$ over time
$d\tau=\eps^{2}dt$
must then be (1) the increment coming from 
its own jumps, which is 
$\eps^{-2}dt+\eps^{-1}dB_h$, where
$B_h$ is a Brownian motion, and 
(2) the increment coming from 
pushing, which is $\eps e^{T_{h-1,1}-T_{h,1}}$
times the increment of $\la^{(h-1)}_{1}$.
Collecting terms of order $\eps^{-1}$, we conclude that
\begin{align*}
	d T_{h,1}=dB_h+e^{T_{h-1,1}-T_{h,1}} d\tau,
	\qquad 1\le h\le N,
\end{align*}
where $B_1,B_2,\ldots,B_N$
are independent standard Brownian motions
(for $h=1$ the last term is omitted).
This system of stochastic differential equations (SDEs, for short) is solved by 
\begin{align}
	T_{h,1}=\log
	\int\limits_{0<s_1<\ldots<s_{h-1}<\tau}
	e^{B_1(s_1)+
	\ldots
	+\big(B_h(\tau)-B_h(s_{h-1})\big)}
	ds_1 \ldots ds_{h-1}.
	\label{polymer_integral}
\end{align}
The integral in the right-hand
side can be viewed as the logarithm
of the partition function
(i.e., the \emph{free energy})
of a semi-discrete 
Brownian polymer, see Fig.~\ref{fig:polymer},
and also \cite[Chapter 5]{BorodinCorwin2011Macdonald} for a general discussion
of directed polymers in random media.\begin{figure}[htbp]
	\begin{center}
		\begin{tabular}{cc}
			\begin{tikzpicture}
				[scale=1]
				\foreach \y in {1,2,3,4,5,6}
				{
					\draw[thick] (7,\y)--(0,\y);
				}
				\draw[thick,
					decoration={markings,mark=at position 1 with {\arrow[scale=2]{>}}},
    				postaction={decorate}] 
    				(4,1)--(7,1);
    			\foreach \y in {1,2,3}
				{
					\node at (-.35,\y) {$\y$};
				}
				\node at (-.35,6) {$N$};
				\node at (7.1,.65) {\large$\tau$};
				\node at (7.2,4.4) {$(\tau,h)$};
				\draw[line width=4, color=red, opacity=.65] (0,1)--(1.2,1)--(1.2,2)--(1.6,2)--(1.6,3)
				--(3.8,3)--(3.8,4)--(4.4,4)--(7,4);
				\draw[fill] (0,1) circle (.12);
				\draw[fill] (7,4) circle (.12);
				\newcommand{\BM}[5]{
				\draw[#4] {#5}
				\foreach \x in {1,...,#1}
				{   -- ++(#2,rand*#3)
				}
				}
				\BM{80}{0.02}{0.06}{thick,blue}{(0,6)};
				\BM{80}{0.02}{0.06}{thick,blue}{(0,5)};
				\BM{80}{0.02}{0.06}{thick,blue}{(0,4)};
			\end{tikzpicture}
		\end{tabular}
	\end{center}
	\caption{Semi-discrete 
	Brownian polymer.}
	\label{fig:polymer}
\end{figure}
More precisely, to any Poisson-type up-right path $\phi_{(0,1)\to(\tau,h)}$
that travels from $1$ to $h$
during time $\tau$
with jumps at moments 
$0<s_1<\ldots<s_{h-1}<\tau$, assign the energy
\begin{align*}
	E(\phi_{(0,1)\to(\tau,h)})=B_1(s_1)+
	\big(B_2(s_2)-B_2(s_1)\big)
	+\ldots
	+\big(B_h(\tau)-B_h(s_{h-1})\big).
\end{align*}
Then $T_{h,1}$ is the logarithm of the 
integral of the Boltzmann factor
$$\exp\big(E(\phi_{(0,1)\to(t,h)})\big)$$
over the Lebesgue measure on all such paths
(the inverse temperature can be absorbed
into the rescaling of $\tau$ with the help of the 
Brownian scaling).

Similar empirical scaling arguments
show that the 
Markov process on Gelfand--Tsetlin
schemes of depth $N$
that lead to the 
$q$-PushTASEP (Example 1 in
\S \ref{sub:examples_of_q_deformed_two_dimensional_dynamics})
converges to 
a solution of the following system of SDEs:
\begin{align}\label{SDE_hierarchy}
	dT_{h,k}=\mathbf{1}_{k=1}dB_h
	+\mathbf{1}_{k\ne1}dT_{h-1,k-1}
	+\left(e^{T_{h-1,k}-T_{h,k}}-
	e^{T_{h-1,k-1}-T_{h,k-1}}\right)d\tau.
\end{align}

\begin{theorem}\label{thm:polymer_limit}
	Under the above scaling \eqref{laj_polymer_scaling}, 
	the measure $\Prob_{t,h}\{\la\}$
	weakly converges to a probability measure
	on arrays\footnote{Note
	that there is no interlacing
	in this limit!} $\{T_{h,k}\}\in\R^{\frac{N(N+1)}2}$
	that can be written in the form
	\begin{align}\label{polymer_hierarchy}
		T_{h,1}+\ldots+T_{h,k}=
		\log\int \ldots\int e^{E(\phi_1)+\ldots
		+E(\phi_k)}d\phi_1 \ldots d\phi_k,
	\end{align}
	the integral taken over the Lebesgue
	measure on 
	the polytope of 
	$k$-tuples
	of nonintersecting Poisson-type
	paths $\phi_1,\ldots,\phi_k$
	joining 
	$\{(0,1),(0,2),\ldots,(0,k)\}$
	with 
	$\{(\tau,h-k+1),(\tau,h-k+2),\ldots,(\tau,h)\}$,
	see Fig.~\ref{fig:nonint_polymer}.
	The limit measure
	is invariant under the flip
	$\{T_{h,k}\leftrightarrow -T_{h,h-k+1}\}$.
\end{theorem}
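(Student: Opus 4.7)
My plan is to extract the joint law of the rescaled variables $T_{N,j}$ directly from the contour integral identity \eqref{qWhit_integral_formula}, using products of Macdonald operators $\D^{(k)}$ with several parameters to access joint observables. Concretely, iterating the derivation that produced \eqref{qWhit_integral_formula} but replacing the single operator $\D^{(k)}$ by a product $\D^{(k_1)}_{q_1} \cdots \D^{(k_m)}_{q_m}$ (each taken with its own $q$-parameter) yields, via the eigenrelation, an expression of the form
\begin{equation*}
\E_{\Prob_{t,N}}\Bigl[\prod_{r=1}^{m} q_r^{\la_N+\ldots+\la_{N-k_r+1}}\Bigr] = \text{(explicit $m k$-fold contour integral)}.
\end{equation*}
Setting each $q_r = e^{-\eps \theta_r}$ and inserting the ansatz \eqref{laj_polymer_scaling} converts the left side, after dividing out the deterministic prefactor $e^{-\eps^{-1}\sum \theta_r(\cdot)}$ coming from $\tau k\eps^{-2}-(N+1-2j)\eps^{-1}\ln\eps$, into the joint Laplace transform $\E[\exp(-\sum_r \theta_r(T_{N,N}+\ldots+T_{N,N-k_r+1}))]$.

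Next I would perform the scaling $z_j = 1 + \eps w_j$ on the right side. As computed in the preamble to Theorem \ref{thm:polymer_limit}, each factor $(1-z_j)^{-N}$ contributes $\eps^{-N}$, $e^{(q-1)tz_j}$ contributes $e^{-\tau\eps^{-1}}e^{\tau/2 - \tau w_j}$, and the Vandermonde-squared factor contributes $\eps^{k(k-1)}$; the remaining finite integral is against a meromorphic kernel in the $w_j$'s of pole order $N$ at the origin, integrated against $e^{-\tau\sum w_j}$. I would then recognize this finite integral as exactly the known Laplace transform formula for the partition function of $k$ nonintersecting semi-discrete Brownian polymers joining $(0,1),\ldots,(0,k)$ to $(\tau,N-k+1),\ldots,(\tau,N)$; this identification can be carried out either by direct contour manipulation (closing the $w$-contours and summing residues against the Lindstr\"om--Gessel--Viennot determinant for nonintersecting Brownian paths) or by invoking the polymer replica formulas of \cite{OConnellYor2001}, \cite{Oconnell2009_Toda}. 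Matching Laplace transforms for a dense family of parameters $\theta_r$ then gives both the weak convergence and the polymer identity \eqref{polymer_hierarchy} for the top row $h=N$; the statement for general $h$ follows because conditioning on the top row recovers the Gibbs structure of \S\ref{sub:gibbs_property}, whose $\eps\to 0$ limit is precisely the restriction of the RSK-type polymer to the subsystem indexed by $h$.

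The flip symmetry $\{T_{h,k}\leftrightarrow -T_{h,h-k+1}\}$ I would deduce at the level of the polymer formula. The complement, inside the strip $[0,\tau]\times\{1,\ldots,h\}$, of a $k$-tuple of nonintersecting up-right Poisson paths running from $\{(0,j)\}_{j=1}^k$ to $\{(\tau,h-k+j)\}_{j=1}^k$ is canonically an $(h-k)$-tuple of nonintersecting Poisson paths running from $\{(0,k+j)\}_{j=1}^{h-k}$ to $\{(\tau,j)\}_{j=1}^{h-k}$; combined with the time-reversal symmetry of standard Brownian motion (and relabeling $B_j(\tau-\cdot)-B_j(\tau)$) this produces the sign flip in the logarithm of the Boltzmann integral.

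The main obstacle is the rigorous steepest-descent analysis on the right side of \eqref{qWhit_integral_formula}: one must justify that after the rescaling $z_j = 1+\eps w_j$ the small neighborhoods around $1$ capture the entire integral, uniformly in the Laplace parameters $\theta_r$, and that the resulting finite integral indeed represents the Laplace transform of the polymer partition function rather than of some other random variable with the same moments (the $q$-Whittaker moments grow fast enough that the moment problem is borderline, which is the very intermittency issue flagged in the introduction). A safe route is to restrict attention to joint Laplace transforms at real positive $\theta_r$, for which the integrand decays and the moment problem is determinate; the full weak convergence then follows from tightness, which can be obtained from a priori bounds on $\E[e^{-\theta(T_{N,j}-T_{N,j-1})}]$ in both directions, exactly the kind of two-sided Laplace control developed in \cite{BorodinCorwin2011Macdonald} and the subsequent analysis discussed in Sections \ref{sec:moments_for_q_whittaker_processes}--\ref{sec:laplace_transforms} below.
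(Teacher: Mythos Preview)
The paper does not prove this theorem in the text; it simply cites \cite{Oconnell2009_Toda} for the polymer dynamics and \cite[\S4]{BorodinCorwin2011Macdonald} for the weak convergence. The argument there is not moment-based --- it goes through explicit density formulas for the $q$-Whittaker measure written via $q$-Whittaker functions and their convergence to the $GL(N)$-Whittaker densities describing the polymer law. Your route is genuinely different, but it has a gap at the first step.

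You propose to apply a product $\D^{(k_1)}_{q_1}\cdots\D^{(k_m)}_{q_m}$ with \emph{distinct} parameters $q_r$ in order to extract a joint Laplace transform. This fails: the $q$-Whittaker polynomial $P_\la$ depends on $q$, and $P_\la^{(q)}$ is not an eigenfunction of $\D^{(k)}_{q'}$ when $q'\neq q$. (Contrast the Schur case $q=t$ of \S\ref{ssub:general_recipe}, where $s_\la$ is parameter-free and one may legitimately vary $q$.) With a single $q$ one can only iterate $\D^{(k_1)}\cdots\D^{(k_m)}$ to obtain observables of the form $\E\bigl[q^{a_1\la_N+a_2\la_{N-1}+\cdots}\bigr]$ with integers $a_1\ge a_2\ge\cdots\ge 0$, a much thinner family than the full joint Laplace transform; showing that these determine the limiting law, and supplying the tightness you flag at the end, would require substantial additional input not present in your outline. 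Your path-complementation argument for the flip symmetry is also not correct as stated: the complement of a $k$-tuple of up-right paths in the strip is not an $(h-k)$-tuple of up-right paths. The symmetry is in fact a nontrivial property of the geometric RSK correspondence, cf.\ \cite{Oconnell2009_Toda}.
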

\begin{proof}
	The Markov dynamics related to the semi-discrete
	directed
	Brownian polymer was introduced in 
	\cite{Oconnell2009_Toda}.
	The weak convergence was proven in
	\cite[\S4]{BorodinCorwin2011Macdonald}.
\end{proof}
\begin{figure}[htbp]
	\begin{center}
		\begin{tikzpicture}
			[scale=1]
			\foreach \y in {1,2,3,4,5,6,7}
			{
				\draw[thick] (7,\y)--(0,\y);
			}
			\draw[thick,
				decoration={markings,mark=at position 1 with {\arrow[scale=2]{>}}},
				postaction={decorate}] 
				(4,1)--(7,1);
			\foreach \y in {1,2,3}
			{
				\node at (-.35,\y) {$\y$};
			}
			\node at (-.35,6) {$h$};
			\node at (7.1,.65) {\large$\tau$};
			\node[right] at (6.5,4.4) {$(\tau,h+k-1)$};
			\node[right] at (6.5,6.4) {$(\tau,h)$};
			\draw[line width=4, color=red, opacity=.65] (0,1)--(2.2,1)--(2.2,2)--(2.6,2)--(2.6,3)
			--(3.8,3)--(3.8,4)--(4.4,4)--(7,4);
			\draw[line width=4, color=red, opacity=.65] (0,2)--(1.7,2)--(1.7,3)--(2.3,3)--(2.3,4)
			--(3.3,4)--(3.3,5)--(7,5);
			\draw[line width=4, color=red, opacity=.65] (0,3)--(.7,3)--(.7,4)--(1.4,4)--(1.4,5)
			--(2,5)--(2,6)--(7,6);
			\draw[fill] (0,1) circle (.12);
			\draw[fill] (0,2) circle (.12);
			\draw[fill] (0,3) circle (.12);
			\draw[fill] (7,4) circle (.12);
			\draw[fill] (7,5) circle (.12);
			\draw[fill] (7,6) circle (.12);
			\newcommand{\BM}[5]{
			\draw[#4] {#5}
			\foreach \x in {1,...,#1}
			{   -- ++(#2,rand*#3)
			}
			}
			\BM{80}{0.02}{0.06}{thick,blue}{(0,7)};
			\BM{80}{0.02}{0.06}{thick,blue}{(0,6)};
			\BM{80}{0.02}{0.06}{thick,blue}{(0,5)};
			\BM{80}{0.02}{0.06}{thick,blue}{(0,4)};
		\end{tikzpicture}
	\end{center}
	\caption{Nonintersecting Poisson paths $\phi_1,\ldots,\phi_k$.}
	\label{fig:nonint_polymer}
\end{figure}

It is also known that the right-hand side 
of \eqref{polymer_hierarchy}
satisfies the system of SDEs 
\eqref{SDE_hierarchy}. Thus,
the convergence in the above theorem
should extend to a trajectory-wise statement, 
but, to our best knowledge, this has not been worked out in 
full
detail yet.

\medskip

Observe the similarity of \eqref{polymer_hierarchy}
and the row Robinson--Schensted construction of
\S \ref{sub:an_example_of_a_two_dimensional_dynamics}
(in particular, see Figures 
\ref{fig:RSK_paths} and \ref{fig:nonint_polymer}). 
In fact, \eqref{polymer_hierarchy}
arises via a \emph{geometric} lifting
of the Robinson--Schensted(--Knuth) correspondence,
see 
\cite{Oconnell2009_Toda}, \cite{COSZ2011},
\cite{OSZ2012}.

It is worth noting that other (Brownian-type) scaling limits of 
growth models discussed here are considered in \cite{GorinShkolnikov2012}, 
\cite{GorinShkolnikov2014}.



\section{Moments of $q$-Whittaker processes} 
\label{sec:moments_for_q_whittaker_processes}

From now on we focus on the asymptotic behavior
of the free energy
$T_{N,1}\stackrel{d}{=}-T_{N,N}$ 
(equality in distribution)
of the semi-discrete Brownian polymer,
see \S \ref{sub:polymer_limit} (recall that this is the 
logarithm of the polymer's partition function $e^{-T_{N,N}}$).
The free energy can be viewed as a limit of either $\la^{(N)}_{1}$
(thus, $q$-PushTASEP), or of 
$\la^{(N)}_{N}$ (corresponding to the $q$-TASEP),
and either 
one can be used for the analysis.
(Note that to obtain the polymer's partition function, 
$N$ must remain fixed.)
We will employ the $q$-TASEP,
as this is a bit more straightforward,
and there are more details on the $q$-TASEP in the literature.

\subsection{Moments of the $q$-TASEP} 
\label{sub:moments_of_the_qtasep}

We start by employing products
of first order Macdonald operators
rather than a single one to obtain moments (of all orders) of the
$q$-TASEP particle locations.
\begin{proposition}\label{prop:moments_of_the_qtasep}
	Consider the random Gelfand--Tsetlin schemes
	of depth $N\ge1$
	distributed according to 
	\begin{align*}
		\Prob_{t,N}\{\la^{(N)}\}
		\La^{N}_{N-1}(\la^{(N)},\la^{(N-1)})
		\La^{N-1}_{N-2}(\la^{(N-1)},\la^{(N-2)})
		\ldots
		\La^{2}_{1}(\la^{(2)},\la^{(1)})
	\end{align*}
	(this is the Gibbs measure 
	with the top row distributed according to 
	$\Prob_{t,N}$, see \S \ref{sub:_q_whittaker_facts}).
	Then for any $N\ge N_1\ge N_2\ge \ldots\ge N_k\ge1$, 
	\begin{align}\label{qTASEP_moments}&
		\E \Big(q^{\la^{(N_1)}_{N_1}+\ldots+\la^{(N_k)}_{N_k}}\Big)
		\\&\hspace{40pt}=\nonumber
		\frac{(-1)^k q^{\frac{k(k-1)}2}}{(2\pi\i)^{k}}
		\oint \ldots\oint \prod_{1\le A<B\le k}
		\frac{z_A-z_B}{z_A-qz_B}
		\prod_{j=1}^{k}\frac{e^{(q-1)tz_j}}{(1-z_j)^{N_j}}
		\frac{dz_j}{z_j},
	\end{align}
	where the integral is taken over 
	positively oriented,
	\emph{nested}
	contours around 1: 
	The $z_k$ contour encircles 1
	and no other poles, the 
	$z_{k-1}$ contour encircles 1
	and the contour $\{qz_{k}\}$, 
	and so on; 
	the $z_1$ contour
	encircles
	$\{1,qz_{k},qz_{k-1},\ldots,qz_2\}$,
	see Fig.~\ref{fig:qTASEP_contours}.
\end{proposition}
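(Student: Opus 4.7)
The plan is to apply a product of first Macdonald difference operators $\D^{(1)}$ at $t=0$, each acting on a nested subset $(x_1,\ldots,x_{N_j})$ of the variables, to the Cauchy-type expansion \eqref{q-Whit_expansion} underlying the $q$-Whittaker measure on the top row. This parallels the derivation of \eqref{qWhit_integral_formula} just above, except that a single higher Macdonald operator $\D^{(k)}$ is replaced by a product of $k$ first-order operators, each acting on a different variable set, in order to probe observables that live on different levels of the Gelfand--Tsetlin scheme.

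First, I would iterate the branching rule (Proposition \ref{prop:fact1_qWhit}) to expand $P_{\la^{(N)}}(x_1,\ldots,x_N)$ appearing in \eqref{q-Whit_expansion} into a sum over full interlacing schemes $\boldsymbol\la=(\la^{(1)}\prec\cdots\prec\la^{(N)})$. This places the full Gibbs measure on the right-hand side, with a factor $P_{\la^{(N_j)}}(x_1,\ldots,x_{N_j})/P_{\la^{(N_j)}}(1,\ldots,1)$ explicitly visible at each targeted level $N_j$. Then, successively for $j=k,k-1,\ldots,1$ (smallest variable set first), apply $\D^{(1)}$ at $t=0$ in the variables $(x_1,\ldots,x_{N_j})$. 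By the $t=0$ specialization of the eigenrelation \eqref{Macdonald_eigenrelation},
\begin{equation*}
\D^{(1)}\big|_{t=0,\,(x_1,\ldots,x_M)}\, P_\mu(x_1,\ldots,x_M) \;=\; q^{\mu_M}\, P_\mu(x_1,\ldots,x_M),
\end{equation*}
each application extracts the factor $q^{\la^{(N_j)}_{N_j}}$ on the right-hand side. Specializing all $x_i=1$ at the end reads off the joint expectation we seek.

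Next, I would rewrite each operator as a contour integral, using the second form in \eqref{Macdonald_difference_operator} at $t=0$ and the residue trick already exploited in Lemma \ref{lemma:contour_integral} and in the preceding proof. Performing the $k$ rewritings in turn produces a $k$-fold contour integral. Specializing $x_i=1$ on the left-hand side yields the factors $e^{(q-1)tz_j}/(1-z_j)^{N_j}$; the Cauchy-type interaction factors $\prod_{A<B}(z_A-z_B)/(z_A-qz_B)$ emerge from cross-contributions, namely when an outer operator's $q$-shift acts inside an earlier (inner) operator's kernel, converting poles of the form $z_l-x_m$ into $z_l-qz_j$; and the overall prefactor $(-1)^k q^{k(k-1)/2}$ is routine bookkeeping of signs and normalizations coming from the $t=0$ limit of $\D^{(1)}$.

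The main obstacle is that the first Macdonald operators on nested variable sets do not commute, so the order of application is essential, and once converted to contours this order forces the specific nested configuration of the integration contours. Concretely, because the $j$-th $q$-shift acts on some of the $x_m$ also appearing in the kernels $(z_l-x_m)^{-1}$ of the earlier-applied (inner) operators, the $z_l$-contour must encircle not only $\{1\}$ but also $\{qz_{l+1},\ldots,qz_k\}$ to avoid picking up unintended residues when the shifts take effect. Tracking these interactions carefully is the heart of the argument, and it closely follows the template developed in \cite{BorodinCorwin2011Macdonald}.
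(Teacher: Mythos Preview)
Your overall architecture---apply a product of first Macdonald operators $\D^{(1)}$ (at Macdonald parameter $t=0$) on nested variable sets to the expansion \eqref{q-Whit_expansion}, then convert each application to a contour integral via the residue trick---is exactly the paper's strategy. However, you have the order of the operators reversed, and this is not a cosmetic issue: the argument genuinely fails in the order you propose.

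You apply $\D^{(1)}_{N_k}$ (smallest set) first and then move outward to $\D^{(1)}_{N_1}$. The paper does the opposite: it applies $\D^{(1)}_{N_1}$ first, then specializes $x_{N_2+1}=\cdots=x_{N_1}=1$, then applies $\D^{(1)}_{N_2}$, and so on. The reason the paper's order is forced is visible on the right-hand side. After you apply $\D^{(1)}_{N_k}$, the function is no longer symmetric in $(x_1,\ldots,x_{N_{k-1}})$: it is a sum $\sum_{\mu} q^{\mu_{N_k}}\,\widetilde P_\mu(x_1,\ldots,x_{N_k})\cdot(\text{skew factor in }x_{N_k+1},\ldots)$, and the weight $q^{\mu_{N_k}}$ depends on the inner branching index $\mu$. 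You therefore cannot recombine this into $\widetilde P_{\la^{(N_{k-1})}}(x_1,\ldots,x_{N_{k-1}})$, and $\D^{(1)}_{N_{k-1}}$ does \emph{not} act as multiplication by $q^{\la^{(N_{k-1})}_{N_{k-1}}}$. (A two-variable check already shows the problem: with $N_1=2$, $N_2=1$, one has $\D^{(1)}_1=\T_{q,x_1}$, and $\D^{(1)}_2\,\D^{(1)}_1\,\widetilde P_{(1,0)}(x_1,x_2)$ is singular at $x_1=x_2=1$, whereas $\D^{(1)}_1\,\D^{(1)}_2\,\widetilde P_{(1,0)}$ specializes correctly to $(1+q)/2$.) On the left-hand side the same reversal also breaks the residue rewriting: after the inner operator, the function of $(x_1,\ldots,x_{N_{k-1}})$ is multiplicative but with \emph{different} single-variable factors for $i\le N_k$ and $N_k<i\le N_{k-1}$, so the one-function residue identity used in \eqref{proof_qTASEP_moments_2} no longer applies directly.

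In the paper's order the mechanism is clean: $\D^{(1)}_{N_1}$ acts on $\widetilde P_{\la^{(N_1)}}$ itself, giving the eigenvalue $q^{\la^{(N_1)}_{N_1}}$; one then branches (or, equivalently, specializes the top block of variables to $1$) down to level $N_2$ and applies $\D^{(1)}_{N_2}$ to the resulting $\widetilde P_{\la^{(N_2)}}$. On the left, the function stays of the form $\prod_{i\le N_j} G(x_i)$ with a single $G$ at each stage, and the cross term $\dfrac{G(qw)}{G(w)}=q\,\dfrac{z-w}{z-qw}\,e^{t(q-1)w}$ is precisely what produces the factor $(z_A-z_B)/(z_A-qz_B)$ together with the nesting constraint. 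So: keep your plan, but apply the operators from the largest variable set to the smallest, interleaving with specialization of the freed variables.
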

\begin{figure}[htbp]
	\begin{center}
		\begin{tikzpicture}
			[scale=8]
			\def\x{0.008}
			\def\q{0.87}
			\draw[->,thick] (-.1,0)--(1.4,0);
			\draw[->,thick] (0,-.25)--(0,.25);
			\draw[fill] (0,0) circle(\x);\node [below left] at (0,0) {$0$};
			\draw[fill] (1,0) circle(\x);\node [below] at (1,0) {$1$};
			\draw[fill] (\q,0) circle(\x);\node [below] at (\q,0) {$q$};
			\draw[ultra thick, decoration={markings,
			      mark=at position .2 with {\arrow{>}}}, 
			      postaction={decorate}] 
		    (1,0) circle(.08) node[above right] {$z_k$};
		    \draw[thick, dashed] 
		    (\q,0) circle(.08*\q)
		    node[above left, xshift=-12, yshift=3] {$qz_k$};
		    \draw[ultra thick, decoration={markings,
			    mark=at position .28 with {\arrow{>}}}, 
			    postaction={decorate}]  
			    (\q+.01,0) ellipse (.24 and .14)
			    node[yshift=26,xshift=10] {$z_{k-1}$};
			\draw[ultra thick, decoration={markings,
			    mark=at position .39 with {\arrow{>}}}, 
			    postaction={decorate}]  
			    (\q-.05,0) ellipse (.34 and .18)
			    node[yshift=11,xshift=-60] {$z_{k-2}$};
			\node at (.42,.1) {$\ldots \ldots$};
			\node at (1.17,.1) {$\ldots$};
			\draw[ultra thick, decoration={markings,
			    mark=at position .42 with {\arrow{>}}}, 
			    postaction={decorate}]  
			    (\q-.13,0) ellipse (.54 and .22)
			    node[yshift=31,xshift=-110] {$z_{1}$};
		\end{tikzpicture}
	\end{center}
	\caption{\emph{Nested} contours of integration in \eqref{qTASEP_moments}.}
	\label{fig:qTASEP_contours}
\end{figure}
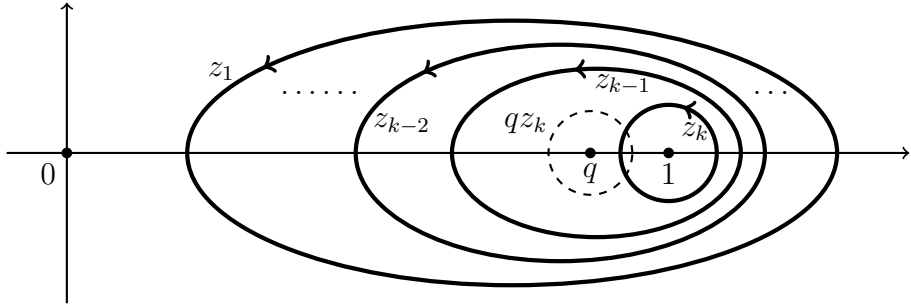
\begin{proof}
	We consider $k=2$, 
	for larger $k$
	the argument is similar. 
	We start with the defining identity
	\eqref{q-Whit_expansion}:
	\begin{align}\label{proof_qTASEP_moments_1}
		e^{t\sum_{i=1}^{N_1}(x_i-1)}
		=\sum_{\la^{(N_1)}}
		\Prob_{t,N_1}\{\la^{(N_1)}\}
		\widetilde{P}_{\la^{(N_1)}}
		(x_1,\ldots,x_{N_1}),
	\end{align}
	where $\tilde P_\la$ means the 
	normalization of $P_\la$
	by itself evaluated at all $x_j\equiv1$.
	Apply the first Macdonald operator
	in $N_1$ variables to \eqref{proof_qTASEP_moments_1}
	(note that the second Macdonald parameter $t$ is zero, 
	and it has no relation to the time $t$ in \eqref{proof_qTASEP_moments_1}).
	This operator has the form
	\begin{align*}
		\D^{(1)}_{N_1}=\sum_{i=1}^{N_1}
		\prod_{j\ne i}
		\frac{-x_j}{x_i-x_j}
		\T_{q,x_i}.
	\end{align*}
	Because of the eigenrelation
	$D^{(1)}_{N_1}\tilde P_{\la^{(N_1)}}=q^{\la^{(N_1)}_{N_1}}P_{\la^{(N_1)}}$,
	in the right-hand side of \eqref{proof_qTASEP_moments_1}
	we observe
	\begin{align*}
		\sum_{\la^{(N_1)}}
		q^{\la^{(N_1)}_{N_1}}
		\Prob_{t,N_1}\{\la^{(N_1)}\}
		\widetilde{P}_{\la^{(N_1)}}
		(x_1,\ldots,x_{N_1}).
	\end{align*}
	On the other hand, the left-hand side of
	\eqref{proof_qTASEP_moments_1} is, 
	by residue expansion, equal to the integral
	\begin{align}
		-\frac{x_1,\ldots,x_{N_1}}{2\pi\i}
		\oint \frac{e^{t\sum_{i=1}^{N_1}(x_i-1)}e^{(q-1)tz}}
		{(x_1-z)\ldots (x_{N_1}-z)}
		\frac{dz}{z},
		\label{proof_qTASEP_moments_2}
	\end{align}
	over a positively oriented contour
	around the simple poles $x_1,\ldots,x_{N_1}$.

	The above argument works for applying
	$\D^{(1)}_{N_1}$
	to any multiplicative function
	$F(x_1)\ldots F(x_{N_1})$ with the exponentials 
	$e^{t\sum_{i=1}^{N_1}(x_i-1)}e^{(q-1)tz}$ in 
	\eqref{proof_qTASEP_moments_2}
	replaced by 
	$F(x_1)\ldots F(x_{N_1})\frac{F(qz)}{F(z)}$.
	
	In the next step,
	we apply $\D^{(1)}_{N_2}$
	to the result 
	of application of $\D^{(1)}_{N_1}$
	to \eqref{proof_qTASEP_moments_1}
	after setting
	$x_{N_2+1}=\ldots=x_{N_1}=1$
	(the order $N_2\le N_1$ is important).
	For the right-hand side of the resulting expression,
	we use Proposition \ref{prop:fact1_qWhit}
	that gives
	\begin{align*}&
		P_{\la^{(N_1)}}(x_1,\ldots,x_{N_2},
		\underbrace{1,\ldots,1}_{N_1-N_2})
		\\&\hspace{30pt}=\sum_{\la^{(N_2)}}
		\Prob_{t,N_1}\{\la^{(N_1)}\}
		\La^{N_1}_{N_2}(\la^{(N_1)},\la^{(N_2)})
		\tilde P_{\la^{(N_2)}}(x_1,\ldots,x_{N_2}).
	\end{align*}
	Here $\La^{N_1}_{N_2}$ means the matrix product
	$\La^{N_1}_{N_1-1}\ldots \La^{N_2+1}_{N_2}$. 
	
	This implies, together with 
	the eigenrelation
	$\D^{(1)}_{N_2}P_{\la^{(N_2)}}=q^{\la^{(N_2)}_{N_2}}P_{\la^{(N_2)}}$,
	that the 
	right-hand side (after setting $x_1=\ldots=x_{N_2}=1$) gives 
	$\E q^{\la^{(N_1)}_{N_1}+\la^{(N_2)}_{N_2}}$.
	On the other hand, in the left-hand side, the $x$-dependence 
	in \eqref{proof_qTASEP_moments_2},
	after setting $x_{N_2+1}=\ldots=x_{N_1}=1$,
	is in the form of $G(x_1)\ldots G(x_{N_2})$
	with $G(x)=\frac{x}{x-z}e^{t(x-1)}$.
	Hence, we can apply the same residue
	expansion (for computing the application of 
	$\D^{(1)}_{N_2}$ to \eqref{proof_qTASEP_moments_2}) 
	using the fact that 
	\begin{align*}
		\frac{G(qw)}{G(w)}=q
		\frac{z-w}{z-qw}e^{t(q-1)w}.
	\end{align*}
	Here $w$ is the new integration variable 
	whose contour has to encircle 
	$x_1,\ldots,x_{N_2}$, but not any other poles
	(in particular, not $q^{-1}z$).
	Renaming $(z,w)\mapsto (z_1,z_2)$,
	we obtain the desired formula for $k=2$.
	For larger $k$ the proof is similar.
\end{proof}

The above iterative argument is due to V. Gorin, see 
\cite{BCGS2013} for a more general version.
The original proof of \eqref{qTASEP_moments} in
\cite{BorodinCorwinSasamoto2012}
involved a discretization of the 
quantum delta Bose gas, see also 
\cite{BorodinCorwinPetrovSasamoto2013} and
\S \ref{sub:moments_of_semi_discrete_brownian_polymer} below.


\subsection{Moments of the semi-discrete Brownian polymer} 
\label{sub:moments_of_semi_discrete_brownian_polymer}

Since we already know the 
scaling which takes us from $\la^{(N)}_N$
to the polymer partition function (\S \ref{sub:polymer_limit}), we can immediately 
do the limit in the integral of 
Proposition \ref{prop:moments_of_the_qtasep}.
This is very similar to the limit that we took in 
\S \ref{sub:polymer_limit}. 
That is, let us use
\begin{align*}
	t=\tau \eps^{-2},\qquad
	q=e^{-\eps}, 
	\qquad z_j=1-\eps w_j.
\end{align*}
This leads to the following formula for the 
moments of the polymer partition function:
For $N_1\ge N_2\ge \ldots\ge N_k\ge0$
(as in Proposition \ref{prop:moments_of_the_qtasep}), we have
\begin{align}\label{polymer_moments}
	\E \Big(e^{-T_{N_1,N_1}- \ldots -T_{N_k,N_k}}\Big)
	=
	\frac{e^{\tau k/2}}{(2\pi\i)^{k}},
	\oint \ldots\oint
	\prod_{1\le A<B\le k}
	\frac{w_A-w_B}{w_A-w_B-1}
	\prod_{j=1}^{k}
	\frac{e^{\tau w_j}}{w_j^{N_j}}
	dw_j,
\end{align}
where the integrals 
are now over nested contours around $0$:
the $w_k$ contour contains only $0$,
the $w_{k-1}$ contour contains $\{w_k+1\}$ and $0$,
and so on;
the $w_1$ contour contains $\{0,w_k+1,w_{k-1}+1,\ldots,w_{2}+1\}$.
This limit transition 
from \eqref{qTASEP_moments}, however, is \emph{not} a proof of the formula 
\eqref{polymer_moments}.
Indeed, Theorem \ref{thm:polymer_limit} only claims weak convergence,
and we have exponential moments under the expectation
(that is, expectations of \emph{unbounded} functions).

\medskip
\noindent
\emph{Sketch of the proof of \eqref{polymer_moments}.}
Observe that if we define
(for \emph{any}, not necessarily ordered $N_1,\ldots,N_k\ge0$)
\begin{align*}
	F(\tau;N_1,\ldots,N_k):=
	\frac{1}{(2\pi\i)^{k}}
	\oint \ldots\oint
	\prod_{1\le A<B\le k}\frac{w_A-w_B}{w_A-w_B-1}
	\prod_{j=1}^{k}\frac{e^{\tau w_j}}{w_j^{N_j}}dw_j
\end{align*}
(with contours as in \eqref{polymer_moments}),
then
\begin{align*}
	&
	\frac{d}{d\tau}
	F(\tau;N_1,N_2\ldots,N_k)=
	F(\tau;N_1-1,N_2,\ldots,N_k)
	\\&\hspace{50pt}+
	F(\tau;N_1,N_2-1,\ldots,N_k)
	+\ldots+
	F(\tau;N_1,N_2,\ldots,N_k-1).
\end{align*}
Further, for $N_k=0$, $F$ vanishes (because there are no poles inside the smallest contour),
and for $N_i=N_{i+1}$, 
\begin{align*}&
	F(\tau;\ldots,N_i-1,N_{i+1},\ldots)-
	F(\tau;\ldots,N_i,N_{i+1}-1,\ldots)
	\\&\hspace{170pt}-F(\tau;\ldots,N_i,N_{i+1},\ldots)=0,
\end{align*}
because when we write out the integral for this linear 
combination, the integrand will be skew-symmetric
in $w_i$ and $w_{i+1}$, 
and the two corresponding contours can be taken to be the same
(the obstacle to both these properties
in \eqref{polymer_moments} is the factor
$(w_i-w_{i+1}-1)$, 
and the linear combination above exactly cancels this factor out).
These properties together
with initial condition
$F(0;N_1,\ldots,N_k)=\mathbf{1}_{N_1=\ldots=N_k=1}$
uniquely 
determine $F(\tau;N_1,N_2\ldots,N_k)$
for $N_1\ge \ldots\ge N_k\ge0$.
Thus, it suffices to check that 
the moments of the polymer partition function
(see \eqref{polymer_hierarchy}) satisfy the 
same properties. This
check is fairly straightforward.
\qed
\medskip

A discussion of how the evolution equation and the boundary conditions at
$N_i=N_{i+1}$
used above relate to a discrete quantum
Bose gas can be found in 
\cite{BorodinCorwin2011Macdonald},
\cite{BorodinCorwinSasamoto2012},
\cite{BorodinCorwinPetrovSasamoto2013}.


\subsection{Continuous Brownian polymer} 
\label{sub:continuous_brownian_polymer}

There is a further limit that takes the semi-discrete
polymer
to a fully continuous one.
In that case, one defines
\begin{align*}
	Z(t,x)=
	\frac{1}{\sqrt{2\pi t}}e^{-\frac{x^{2}}{2t}}
	\,\E_{_{\text{Brownian bridge}\hfill\atop
	b\colon(0,t)\to\R,\;
	b(0)=0,\; b(t)=x}}
	:\exp:\left\{
	\int_{0}^{t}\xi(s,b(s))ds
	\right\},
\end{align*}
where $\xi$ is the space-time white noise, and $:\exp:$
means normally ordered exponential,
e.g., see \cite{AlbertsKhaninQuastel2012} for an explanation.
Equivalently (via the Feynman-Kac formula),
$Z$ solves 
the \emph{stochastic heat equation with multiplicative noise}
\begin{align*}
	\frac{\partial}{\partial t}Z(t,x)=
	\frac{1}{2}\frac{\partial^{2}}{\partial x^{2}}
	Z(t,x)+
	\xi(t,x)Z(t,x),\qquad
	Z(0,x)=\delta(x).
\end{align*}
Then, defining
\begin{align*}
	u(t;x_1,\ldots,x_k)=\E_{\text{white noise}}
	\left(
	Z(t,x_1)\ldots Z(t,x_k)
	\right),
\end{align*}
we have
\begin{align}\label{SHE_u_eqn}
	\left(
	\frac{\partial}{\partial t}-\frac{1}{2}\sum_{i=1}^{k}\frac{\partial^2}{\partial x_i^{2}}
	\right)u=0
\end{align}
away from diagonal subset, and
\begin{align}\label{SHE_u_bc}
	\left(
	\frac{\partial}{\partial x_i}-\frac{\partial}{\partial x_{i+1}}
	-1
	\right)u=0
	\qquad\mbox{if $x_i=x_{i+1}$}.
\end{align}

The observations
\eqref{SHE_u_eqn}--\eqref{SHE_u_bc} are not 
hard and were recorded
at least as far back as 
the end of 1980's
by Kardar
\cite{Kardar1987}
and Molchanov
\cite{Molchanov1991}.
We believe that a rigorous proof can be extracted from the results 
of \cite{BertiniCancrini1995}.
In particular, case $N=2$ was treated in \cite[I.3.2]{Albeverio2005}.
To the best of our knowledge,
the general case has not been
worked out in full detail yet.

A concise way to write
\eqref{SHE_u_eqn}--\eqref{SHE_u_bc}
is 
\begin{align*}
	\frac{\partial}{\partial t}u=Hu,\qquad
	H=\frac{1}{2}\left(
	\sum_{i=1}^{k}\frac{\partial^{2}}{\partial x_i^{2}}
	+\sum_{i\ne j}
	\delta(x_i-x_j)
	\right).
\end{align*}
$H$ is called the \emph{delta-Bose gas} 
(or \emph{Lieb--Liniger}) \emph{Hamiltonian}.
The nested contour
integral then takes the form
\begin{align}\label{SHE_moments}
	u(t;x_1,\ldots,x_k)=
	\int\limits_{\al_1-\i\infty}^{\al_1+\i\infty}
	dz_1
	\ldots
	\int\limits_{\al_k-\i\infty}^{\al_k+\i\infty}
	dz_k
	\prod_{1\le A<B\le k}
	\frac{z_A-z_B}{z_A-z_B-1}
	\prod_{j=1}^{k}
	e^{\frac{t}{2}z_j^{2}+x_jz_j},	
\end{align}
where $\al_1>\al_2+1>\ldots>\al_k+(k-1)$, 
and $x_1\le \ldots\le x_k$
(the above formula is not true without the order assumption 
on the $x_j$'s).
For more detail, e.g., see
\cite[\S6.2]{BorodinCorwin2011Macdonald} and references therein.


\subsection{Intermittency} 
\label{sub:intermittency}

By setting $N_1=\ldots=N_k=N$ in \eqref{polymer_moments}, 
we see that the nested contour integrals 
provide us with all moments of the polymer
partition function $e^{-T_{N,N}}$.
One might expect that this
is sufficient to 
find 
its distribution or, equivalently,
the distribution of the free energy $T_{N,N}$.
It turns out that in this 
particular situation this is not true.
The distribution of the polymer partition function
displays \emph{intermittency}, which we now discuss.

This term appeared in studying 
the velocity and temperature fields in a turbulent medium
\cite{BatchelorTownsend1949},
and describes structures that appear in random media having the 
form of peaks that arise at random
places and at random time moments.
The phenomenon is widely
discussed in physics literature, with magnetic hydrodynamics
(like on the surface of the Sun) and cosmology
(theory of creation of galaxies) being two well-known examples,
e.g., see
\cite{ZeldovichMolchanov},
\cite{Molchanov1991}.

The main property that allows one 
to detect an intermittent distribution
is anomalous behavior
(as compared to the Gaussian case, for example)
of ratios of successive moments. 

\medskip
\noindent\textbf{Toy example.}
Consider a sequence of independent identically distributed
random variables $\xi_1,\xi_2,\ldots$,
each taking value $0$ or $2$ with probability $1/2$.
Set $\xi=\xi_1 \ldots\xi_N$.
Then, clearly,
\begin{align*}
	\E \xi=1,\qquad
	\E(\xi^{2})=2^{N},
	\qquad \ldots,\qquad
	\E(\xi^{p})=2^{(p-1)N}.
\end{align*}
The growth speed can be measured by the quantities
\begin{align*}
	\gamma_p=\frac{\log\E(\xi^{p})}{N}=(p-1)\ln2.
\end{align*}
On the other hand, computing a
similar quantity for the sum of 
the $\xi_j$'s (which is asymptotically Gaussian)
gives\footnote{We divide by $\log N$ because the random variable
in question grows roughly linearly in $N$, and $\xi$ above 
has exponential growth.}
\begin{align*}
	\gamma_p=\lim_{N\to\infty}
	\frac{\log\E\big((\xi_1+\ldots+\xi_N)^{p}\big)}{\log N}=p.
\end{align*}
The key difference of these two cases
is that $\frac{\gamma_p}{p}<\frac{\gamma_{p+1}}{p+1}$
in the first case, while
$\frac{\gamma_p}{p}=\frac{\gamma_{p+1}}{p+1}$
in the second one.

\medskip

In general, imagine that 
one has a time-dependent
nonnegative
random variable $Z(t)$
which grows in $t$
roughly exponentially 
(or $\log Z$ grows roughly linearly).
There are many ways to 
measure such growth; 
we mostly follow \cite{CarmonaMolchanov} in the 
exposition below. Define:
\begin{enumerate}[$\bullet$]
	\item \emph{Almost sure Lyapunov exponent}
	\begin{align*}
		\tilde\gamma_1:=\lim_{t\to\infty}\frac{\ln Z(t)}{t},
	\end{align*}
	if the a.s. limit exists (the ``a.s.''
	requirement can be weakened to 
	convergence in probability).
	\item \emph{Moment} (or \emph{annealed})
	\emph{Lyapunov exponents}
	\begin{align*}
		\gamma_p:=\lim_{t\to\infty}
		\frac{\ln\E \big(Z(t)\big)^{p}}{t}
	\end{align*}
	(assuming that limits exist).
\end{enumerate}
H\"older's inequality implies that 
\begin{align*}
	\frac{\gamma_p}{p}\le \frac{\gamma_{p+1}}{p+1}
\end{align*}
(because
$(\E Z^{p})^{\frac1p}\le (\E Z^{p+1})^{\frac1{p+1}}$).
The strict inequalities will be referred to as \emph{intermittency}.
Note that we also obviously have
$\tilde \gamma_1\le \gamma_1$.
In a typical situation, when the distribution
of $Z(t)$ does not deviate much from its mean,
\begin{align*}
	\tilde\gamma_1=\gamma_1=\frac{\gamma_2}{2}=\frac{\gamma_3}{3}
	=\ldots=\frac{\gamma_p}{p}=\ldots.
\end{align*}

\begin{lemma}
	If there exists $k\ge1$ such that
	\begin{align*}
		\frac{\gamma_k}{k}<\frac{\gamma_{k+1}}{k+1},
	\end{align*}
	then for all $p\ge k$,
	\begin{align*}
		\frac{\gamma_p}{p}<\frac{\gamma_{p+1}}{p+1}.
	\end{align*}
\end{lemma}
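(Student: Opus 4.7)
The plan is to derive this as a purely combinatorial consequence of convexity of $p \mapsto \gamma_p$. By H\"older's inequality applied to the nonnegative random variable $Z(t)$, the function $p \mapsto \ln \E \bigl(Z(t)^p\bigr)$ is convex in $p$ for each fixed $t$; dividing by $t$ and passing to the limit $t \to \infty$ preserves convexity, so $p \mapsto \gamma_p$ is convex. Since $Z^0 \equiv 1$ gives $\gamma_0 = 0$, setting $a_n := \gamma_n - \gamma_{n-1}$ for $n \ge 1$, convexity at integer points is equivalent to the statement that $\{a_n\}_{n \ge 1}$ is a non-decreasing sequence.

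Next I would rewrite $\gamma_p/p = (a_1 + a_2 + \cdots + a_p)/p$ as the arithmetic mean of the first $p$ differences, and record the elementary identity
\[
\frac{\gamma_{p+1}}{p+1} - \frac{\gamma_p}{p} \;=\; \frac{a_{p+1} - \gamma_p/p}{p+1}.
\]
Thus the desired strict inequality at index $p$ is equivalent to the assertion that the $(p+1)$-st term of the non-decreasing sequence strictly exceeds the average of the preceding $p$ terms. This reformulates the lemma as a pure statement about running means of non-decreasing sequences.

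The remaining step is then immediate. Applying the reformulation to the hypothesis at $k$ yields $a_{k+1} > (a_1 + \cdots + a_k)/k \ge a_1$, where the last inequality uses that the smallest term of a non-decreasing sequence never exceeds its average. For any $p \ge k$, monotonicity of $\{a_n\}$ gives $a_{p+1} \ge a_{k+1} > a_1$, so the non-decreasing finite sequence $a_1 \le a_2 \le \cdots \le a_p \le a_{p+1}$ is not constant. In particular at least one term $a_j$ with $j \le p$ is strictly smaller than $a_{p+1}$, which forces the average $(a_1 + \cdots + a_p)/p$ to be strictly less than $a_{p+1}$; by the identity above this is precisely $\gamma_p/p < \gamma_{p+1}/(p+1)$.

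The argument is essentially a routine exercise in discrete convex analysis and I do not anticipate any serious obstacle. The only subtlety worth flagging is that the convexity of $\gamma_p$ relies on the pointwise limits defining the $\gamma_p$ existing at all integer values of $p$, which is built into the standing assumption of the lemma; once this is granted, convexity passes to the limit and the rest of the proof is the two-line sequence manipulation above.
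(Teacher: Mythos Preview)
Your proof is correct. Both your argument and the paper's rest on the same underlying fact---H\"older's inequality makes $p\mapsto\gamma_p$ (midpoint-)convex---but the way you exploit it differs somewhat from the paper. The paper proceeds by direct induction: from $2\gamma_{k+1}\le\gamma_{k+2}+\gamma_k$ and the hypothesis $\gamma_k<\tfrac{k}{k+1}\gamma_{k+1}$ one rearranges to $\tfrac{\gamma_{k+1}}{k+1}<\tfrac{\gamma_{k+2}}{k+2}$, then repeats. You instead introduce the increments $a_n=\gamma_n-\gamma_{n-1}$, observe that convexity makes them nondecreasing and that $\gamma_p/p$ is their running mean, and then note that the hypothesis forces $a_{k+1}>a_1$, whence $a_{p+1}\ge a_{k+1}>a_1$ for all $p\ge k$, so the mean of the first $p$ terms is strictly below $a_{p+1}$. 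Your packaging is a bit more conceptual (it identifies the lemma as a statement about running means of monotone sequences and handles all $p\ge k$ at once rather than step-by-step), while the paper's version is the minimal direct computation; neither buys anything the other lacks, and the only extra ingredient you use is the harmless normalization $\gamma_0=0$.
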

\begin{proof}
	H\"older's inequality with $\frac{1}{2}+\frac{1}{2}=1$
	gives
	\begin{align*}
		(\E Z^{k})^{2}\le \E Z^{k+h}\cdot \E Z^{k-h},\qquad h=1,2,\ldots,k,
	\end{align*}
	which implies that 
	$\gamma_k\le \dfrac{\gamma_{k+h}+\gamma_{k-h}}{2}$.
	Replacing $k$ by $k+1$ and taking $h=1$, we have
	\begin{align*}
		\gamma_{k+1}\le \frac{\gamma_{k+2}+\gamma_k}{2}<
		\frac12\left(\gamma_{k+2}+\frac{k}{k+1}\gamma_{k+1}\right)
	\end{align*}
	(we used the hypothesis of the lemma). Rearranging terms gives the 
	needed inequality for $p=k+1$.
	Repeating inductively, we obtain the desired claim.
\end{proof}

Let us now show how the definition of intermittency
relates to peaks. 
If we pick $\al$ such that 
$\dfrac{\gamma_{p}}{p}<\al<\dfrac{\gamma_{p+1}}{p+1}$,
then for large enough $t$ (below we omit $t$ in the 
notation for $Z(t)$):
\begin{enumerate}[$\bullet$]
	\item $\Prob\{Z>e^{\al t}\}>0$, because otherwise we would have $\left(\E Z^{p+1}\right)^{\frac1{p+1}}\le \al$.
	\item An overwhelming contribution to $\E Z^{p+1}$
	comes from the region where $Z>e^{\al t}$. Indeed, 
	\begin{align*}
		\E Z^{p+1}=\E\left(Z^{p+1}\mathbf{1}_{Z\le e^{\al t}}\right)
		+\E\left(Z^{p+1}\mathbf{1}_{Z>e^{\al t}}\right).
	\end{align*}
	The first term is $\le e^{\al(p+1)t}\ll e^{\gamma_{p+1}t}$, 
 	and we know that the left-hand side behaves exactly as $e^{\gamma_{p+1}t}$.
 	\item $\Prob\{Z>e^{\al t}\}\le e^{-(\al-\gamma_p/p)pt}$
 	because $\E Z^{p}\ge e^{\al pt}\Prob\{Z>e^{\al t}\}$.
\end{enumerate}
Hence, we observe a hierarchy of higher and higher peaks concentrated on smaller and smaller sets
(that are actually exponentially small in probability), and higher
peaks contribute overwhelmingly 
to high enough moments.
In the situation of random fields when ergodicity allows to replace
computing expectations by space averaging, at each fixed large time 
one can then observe a hierarchy of islands with exponentially
(in time)
high values that dominate moment computations.

Intermittency is a characteristic feature 
of products of a large number of independent
random variables (cf. the toy example above).
Indeed, by the central limit theorem, 
let us check that random variables 
of the form
$e^{\xi_1+\ldots+\xi_t}\sim e^{\mathcal{N}(\mu t,\sigma^{2} t)}$
(for example, with independent identically distributed $\xi_j$'s)
are intermittent. We have
\begin{align*}
	\E\left(\big[e^{\mathcal{N}(\mu t,\sigma^{2}t)}\big]^{p}\right)=
	e^{t\left(p\mu+\frac{p^{2}}2\sigma^{2}\right)},
\end{align*}
which implies that 
\begin{align*}
	\tilde\gamma_1=\mu<\gamma_1=\mu+\frac{1}{2}\sigma^{2}<
	\frac{\gamma_2}{2}=\mu+\sigma^{2}
	< \frac{\gamma_3}{3}=\mu+\frac32\sigma^{2}
	< \ldots.
\end{align*}


\subsection{Moment problem and intermittency} 
\label{sub:under_intermittency_moments_do_not_define_the_distribution}

Since under intermittency the moments are dominated 
by increasingly atypical behavior (i.e., observed with 
small probability), it is hard to 
expect that the moments would determine the distribution.
For example, for the exponential of the standard Gaussian
$\mathcal{N}(0,1)$ they do not:
Any distribution
with density
\begin{align*}
	f(x)=
	\begin{cases}
		\frac{1}{\sqrt{2\pi}}
		\frac{1}{x}\exp\left(
		-\frac{(\ln x)^{2}}{2}
		\right)
		\cdot (1+\eps h(x))
		,&x >0;\\
		0,&x \le0,
	\end{cases}
\end{align*}
with $h(x):=\sin(2\pi \ln(x))$, $-1\le \eps\le 1$, gives the same moments.
When $\eps=0$, this is the density of the log-normal random	variable $e^{\mathcal{N}(0,1)}$.
See
\cite{Stoyanov2004}
for more detail.

We will now check if the polymer partition function is intermittent.
\begin{theorem}\label{thm:polymer_intermittency}
	The moment Lyapunov exponents for the Brownian polymer
	are given by 
	\begin{enumerate}[$\bullet$]
		\item In the semi-discrete case (\S \ref{sub:polymer_limit} 
		and \S \ref{sub:moments_of_semi_discrete_brownian_polymer}), $\gamma_p=H_p(z_{0,p})$, 
		where (for $N=t$)
		\begin{align*}
			H_p(z)=\frac{p^{2}}{2}+pz-\log(z(z+1)\ldots (z+p-1)),
		\end{align*}
		and $z_{0,p}$ is the unique solution to $H'_p(z)=0$ on $(0,+\infty)$.
		\item In the fully continuous case (\S \ref{sub:continuous_brownian_polymer}),
		for paths between $(0,0)$ and $(t,0)$, 
		\begin{align*}
			\gamma_p=\frac{p^{3}-p}{24}.
		\end{align*}
	\end{enumerate}
\end{theorem}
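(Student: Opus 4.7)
The natural starting point is the nested contour integral representation for the moments: formula \eqref{polymer_moments} with $N_1=\ldots=N_p=N$ and $\tau=N$ in the semi-discrete case, and \eqref{SHE_moments} with $x_1=\ldots=x_p=0$ in the continuous case. In both, the integrand factors as a product of single-variable pieces $f(z_j)^N$ (or $f(z_j)^t$) times the cross-factor $\prod_{A<B}\frac{z_A-z_B}{z_A-z_B-1}$, whose poles are the source of all the complications. The first step is to collapse the nested contours onto a single common one---a small circle around $0$ in the semi-discrete case, or a single vertical line $\{\Re z=\alpha\}$ in the continuous case. Each crossing of a pole at $z_A=z_B+1$ produces a residue, and iterating the procedure yields, after bookkeeping, a sum of integrals indexed by set partitions $\lambda=(B_1,\ldots,B_s)$ of $\{1,\ldots,p\}$: for each block $B_i$ of size $\ell_i$, the $\ell_i$ variables in $B_i$ fuse into a ``string'' $\{w_i,w_i+1,\ldots,w_i+\ell_i-1\}$ parametrized by a single variable $w_i$, the cross-factors among the variables in a given block being canceled by successive residue extractions, and the cross-factors between different blocks producing an explicit rational prefactor that is bounded on the deformed contours.

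Next, I would extract the exponential growth rate of each partition's contribution by Laplace/saddle-point analysis. In the continuous case a block of size $\ell$ produces a Gaussian integral $\int_{i\R}\exp\bigl(\tfrac{t}{2}\sum_{j=0}^{\ell-1}(w+j)^2\bigr)\,dw$; completing the square gives $\tfrac{t}{2}\bigl[\ell\bigl(w+\tfrac{\ell-1}{2}\bigr)^2+\tfrac{\ell(\ell^2-1)}{12}\bigr]$, so after shifting the contour through the saddle $w=-\tfrac{\ell-1}{2}$ the exponential rate equals $\tfrac{t\,\ell(\ell^2-1)}{24}$. In the semi-discrete case the block of size $\ell$ produces $\int \exp\bigl(N\,H_\ell(w)\bigr)\,\tfrac{dw}{w}\cdot(\text{bounded})$, where the prefactor $e^{N p/2}$ distributes as $\ell/2$ per block, so that the rate is $H_\ell(z_{0,\ell})$ by the standard saddle-point bound at the unique positive critical point of $H_\ell$. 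Summing over blocks, the rate of the $\lambda$-term is $\sum_i H_{\ell_i}(z_{0,\ell_i})$ (respectively $\sum_i \ell_i(\ell_i^2-1)/24$).

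The key remaining step is to show that the total expectation is dominated, to leading exponential order, by the single-block partition $\lambda=(p)$, yielding $\gamma_p=H_p(z_{0,p})$ (respectively $(p^3-p)/24$). The upper bound reduces to strict superadditivity of $\ell\mapsto H_\ell(z_{0,\ell})$ (resp.\ $\ell\mapsto \ell(\ell^2-1)/24$). In the continuous case this is elementary:
\begin{align*}
(\ell_1+\ell_2)\bigl((\ell_1+\ell_2)^2-1\bigr)-\ell_1(\ell_1^2-1)-\ell_2(\ell_2^2-1)=3\ell_1\ell_2(\ell_1+\ell_2)>0.
\end{align*}
In the semi-discrete case the claim $H_{\ell_1}(z_{0,\ell_1})+H_{\ell_2}(z_{0,\ell_2})<H_{\ell_1+\ell_2}(z_{0,\ell_1+\ell_2})$ follows from an explicit analysis of the saddle-point equation $H'_\ell(z)=0$, i.e.\ $p=\sum_{j=0}^{\ell-1}\frac{1}{z+j}$, relating $z_{0,\ell}$ to digamma values; a convenient route is to show directly that the concatenated string $\{z_{0,\ell_1},\ldots,z_{0,\ell_1+\ell_2}-1\}$ gives a strictly larger value of the $(\ell_1+\ell_2)$-variable integrand than any decomposition, then invoke the optimality of the joint saddle.

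The main obstacles I anticipate are, first, a clean combinatorial bookkeeping of the residue expansion (to certify that the rational prefactors multiplying each partition's integral are genuinely of subexponential size and do not hide cancellations with the leading term), and second, the strict convexity of $\ell\mapsto H_\ell(z_{0,\ell})$ in the semi-discrete case, which unlike the continuous case cannot be verified by a one-line algebraic identity and will require either a direct calculus argument on the saddle equation or an interpolation argument comparing $H_\ell$ at shifted arguments. For the matching lower bound one notes that after steepest descent the $\lambda=(p)$ contribution is a strictly positive quantity of the claimed order, so the only threat is cancellation with subleading partitions, which is ruled out by the strict gap established above.
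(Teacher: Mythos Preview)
Your proposal is correct and takes essentially the same approach as the paper: steepest descent on the moment contour integrals after un-nesting into ``string'' contributions indexed by partitions of $p$, with the single-string term dominating by superadditivity. The paper itself defers all details to \cite{BorodinCorwin2012Anderson} with only the one-line hint ``steepest descent for contour integral representations of moments,'' but the remark immediately after Lemma~\ref{lemma:unnesting} (that its $q\to1$ limit ``is at the heart of the moments asymptotics which were stated in \S\ref{sub:under_intermittency_moments_do_not_define_the_distribution}'') confirms that the residue/string decomposition you describe is exactly the intended mechanism.
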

\begin{proof}
	Steepest descent for contour integral representations
	of moments given before (\S \ref{sub:moments_of_semi_discrete_brownian_polymer}
	and \S \ref{sub:continuous_brownian_polymer}). 
	See \cite{BorodinCorwin2012Anderson} for details.
\end{proof}

In the fully continuous setting, the above result is due to 
Kardar \cite{Kardar1987} (non-rigorously),
and Bertini--Cancrini \cite{BertiniCancrini1995}.


\subsection{Replica trick} 
\label{sub:replica_trick}

Given our previous discussion on peak domination in moments,
it seems hopeless that this limit behavior of moments
would carry any information
about the behavior of the main bulk of the distribution.
This is, however, a suitable moment to demonstrate the 
(in)famous \emph{replica trick}
widely used in physics.

Note that, at least formally,
\begin{align*}
	\log Z(t)=\lim_{p\to0}\frac{(Z(t))^{p}-1}{p}.
\end{align*}
Averaging both sides, dividing by $t$ and taking $t\to\infty$
suggests
\begin{align*}
	\tilde \gamma_1=\lim_{p\to0}\lim_{t\to\infty}
	\frac{1}{t}\frac{e^{t\gamma_p}-1}{p}=
	\lim_{p\to0}\frac{\gamma_p}{p},
\end{align*}
where we tacitly use a (non-unique!)
analytic continuation of $\gamma_p$
off nonnegative integers.
For the semi-discrete polymer, $H_p(z)=\frac{p^{2}}{2}+pz-\log \frac{\Gamma(z+p)}{\Gamma(z)}$, 
and 
\begin{align*}
	\lim_{p\to0}\frac{H_p(z)}{p}=z-\psi(z),\qquad
	\psi(z)=(\log \Gamma(z))'.
\end{align*}
Taking the value of $z-\psi(z)$
at the only critical point of this function
on $(0,+\infty)$, gives
\begin{align*}
	\tilde \gamma_1=\inf_{z\in(0,+\infty)}(z-\psi(z)).
\end{align*}
This actually \emph{is} the correct answer! 
It was conjectured in 
\cite{OConnellYor2001} and proven in 
\cite{MoriartyOConnell}.

Similarly, in the fully continuous case, 
\begin{align*}
	\tilde \gamma_1=\lim_{p\to0}\frac{1}{p}\frac{p^{3}-p}{24}=-\frac{1}{24},
\end{align*}
which is also correct
\cite{AmirCorwinQuastel2011},
\cite{SasamotoSpohn2010}.

We thus see that this very nonrigorous procedure,
quite remarkably, lead us to the correct almost sure behavior!
In the next section we show how to access
these results rigorously, and our 
approach will also explain 
in a way why the replica trick
worked in this particular situation.



\section{Laplace transforms} 
\label{sec:laplace_transforms}

\subsection{Setup} 
\label{sub:setup'}

As we have seen in \S \ref{sec:moments_for_q_whittaker_processes},
the intermittency phenomenon prevents us from recovering 
the distribution of the polymer partition function
from its moments. However, this is not so in the
$q$-setting. 
Namely, the $q$-moments $\E q^{k\la^{(N)}_N}$, 
$k=1,2,\ldots$, uniquely determine
the distribution of $\la^{(N)}_{N}$
(because $\la^{(N)}_{N}\ge0$ and $q\in(0,1)$, so these are moments
of a bounded random variable). 

Our plan is thus to convert
the $q$-moment formulas that we have 
(Proposition \ref{prop:moments_of_the_qtasep}) into 
a formula for the expectation of a one-parameter family of observables
that remain bounded (unlike the moments $\E q^{k\la^{(N)}_N}$)
in the $q\to1$ which leads to polymers.
Since this will involve $q$-moments with $k\to\infty$, 
it is inconvenient 
to use nested contours in integral representations 
as their positions depend on $k$.
There are two ways to ``un-nest''
the contours: (1) to deform all of them 
to identical large concentric circles $|z|=R>1$;
or (2) to deform all of 
them to identical small concentric circles $|z-1|=r<\eps$.
The first way is easier to realize, but it is harder to 
turn the result into a meaningful asymptotic information.
Thus, we proceed with the second one. 
The following lemma is nontrivial and very useful:
\begin{lemma}\label{lemma:unnesting}
	Let $f$ be a meromorphic function and $\mathbb{A}$ be its singular set
	which must not include~$0$. Assume that $q^{m}\mathbb{A}$ 
	is disjoint from $\mathbb{A}$ for all integers $m\ge1$.
	Then
	\begin{align}
		\nonumber
		\mu_k&:=\frac{(-1)^{k}q^{\frac{k(k-1)}2}}{(2\pi\i)^{k}}
		\oint \ldots\oint 
		\prod_{1\le A<B\le k}\frac{z_A-z_B}{z_A-qz_B}
		\frac{f(z_1)\ldots f(z_k)}{z_1 \ldots z_k}
		dz_1 \ldots dz_k
		\\&\phantom{:}=
		\nonumber
		k!_q
		\sum_{\la=(\la_1\ge\la_2\ge \ldots\ge\la_\ell>0)\atop{
		\la_1+\ldots+\la_\ell=k\atop
		\la=1^{m_1}2^{m_2}\ldots}} \frac{1}{m_1! m_2! \ldots}
		\frac{(1-q)^{k}}{(2\pi\i)^{\ell}}\oint \ldots\oint
		\det\left[\frac{1}{w_iq^{\la_i}-w_j}\right]_{i,j=1}^{\ell}\times\\&
		\hspace{120pt}
		\times
		\prod_{j=1}^{\ell}f(w_j)f(qw_j) \ldots f(q^{\la_j-1}w_j)dw_j,
		\label{unnesting}
	\end{align}
	where each $z_p$ contour 
	contains $\{qz_j\}_{j>p}$ and $\mathbb{A}$, but not $0$
	(thus, the $z$ contours are nested), and 
	the $w_j$ contours contain $\mathbb{A}$ and no other poles
	(so, the $w$ contours can be taken to be all the same).
\end{lemma}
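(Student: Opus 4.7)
The plan is to prove the identity by iterated residue calculus, contracting the nested contour system on the left-hand side to a single common contour around $\mathbb{A}$. I will deform the $z_p$ contours one at a time, say in order $p=1,2,\ldots,k-1$, from their original nested positions (where the $z_p$ contour encloses $\{qz_j\}_{j>p}$ and $\mathbb{A}$) down to small circles around $\mathbb{A}$ only. At each deformation the contour sweeps past poles of the form $z_p = q z_j$ with $j>p$ coming from the denominator factors $z_p - qz_j$, and each such crossing produces a residue term. The hypothesis that $q^m\mathbb{A}$ is disjoint from $\mathbb{A}$ guarantees that no unwanted poles of $f(z_p)$ are ever crossed.

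The key structural observation is that the factor $(z_A - z_B)/(z_A - qz_B)$ is tailored so that iterated residues agglomerate the $z$-variables into $q$-strings: taking the residue at $z_A = qz_B$ substitutes $z_A \mapsto qz_B$, and chaining residues along $z_{i_1}=qz_{i_2}=q^2z_{i_3}=\ldots$ collapses $\lambda$ of the $z$-variables onto the geometric progression $\{w, qw, q^2w, \ldots, q^{\lambda-1}w\}$ for a single surviving root $w$. Moreover, the numerators $z_A - z_B$ cancel the apparent singularities that would otherwise arise when two $z$-variables in the same string are forced to coincide, so the residues extracted along a single string are finite and compute cleanly. After all residues are extracted, what remains is a sum over partitions $\lambda=(\lambda_1,\ldots,\lambda_\ell)$ of $k$ recording the string lengths, with $\ell$ integration variables $w_1,\ldots,w_\ell$ each on a small contour around $\mathbb{A}$. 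Each string of length $\lambda_j$ contributes the ``diagonal'' factor $f(w_j)f(qw_j)\cdots f(q^{\lambda_j-1}w_j)$, while the cross-string contributions come from the factors $(z_A-z_B)/(z_A-qz_B)$ with $A$ and $B$ in different strings, evaluated at the string points.

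Assembling the cross-string factors into the Cauchy-type determinant $\det[1/(w_i q^{\lambda_i} - w_j)]$ is the step where the real bookkeeping lives, and I expect it to be the main obstacle. The residue extraction prescribed above assigns an ordering to the $z$-variables, but the final formula should be symmetric in the roots of strings of a given length. The prefactors $k!_q$ and $1/(m_1!\,m_2!\cdots)$ arise from summing over such orderings: the $q$-factorial from the orderings within a single string (signs and $q$-powers from collating $(-1)^k q^{k(k-1)/2}$ with the residue prefactors $(1-q)$, $(1-q^2)$, etc.\ produced by the chained residues), and the multinomial coefficients from permutations of strings of equal length. The determinant identity for the cross-string factors is most naturally proved by induction on $\ell$, peeling off one string at a time and using the Cauchy determinant expansion; alternatively, one could verify the identity on a dense family of test functions $f$ (e.g., $f(z)=1/(z-a)$ for a single pole $a\in\mathbb{A}$), reducing it to a finite rational identity to be checked by symmetry arguments. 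Either approach requires careful sign and $q$-exponent tracking, and this is where I expect most of the work to lie.
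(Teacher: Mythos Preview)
Your proposal is correct and follows essentially the same approach as the paper: the paper's own proof is just a sketch, stating that the lemma is ``the result of mere bookkeeping of the residues when we shrink the contours and take into account the poles at $z_A=qz_B$,'' and then deferring to \cite[Prop.~3.2.1]{BorodinCorwin2011Macdonald} and \cite[Lemma~3.3, Prop.~7.4]{BorodinCorwinPetrovSasamoto2013} for details. Your identification of the $q$-string structure, the Cauchy determinant for cross-string interactions, and the combinatorial prefactors is exactly what those references carry out, and you are right that the nontrivial labor is in the bookkeeping of the cross-string product into the determinant.
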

Note that for $f(z)=\dfrac{e^{(q-1)tz}}{(1-z)^{N}}$, 
\eqref{unnesting} becomes the formula for $\E q^{k\la^{(N)}_N}$,
see Proposition \ref{prop:moments_of_the_qtasep}.
In this case, $\mathbb{A}=\{1\}$.
\begin{proof}
	This lemma is the result of mere bookkeeping 
	of the residues when we shrink the contours
	and take into account the poles at $z_A=qz_B$.
	The fact that the result is rather nice\footnote{The number
	of residues involved is much larger than the number of terms in \eqref{unnesting}.} 
	is nontrivial,
	and takes origin in harmonic analysis on 
	Riemannian symmetric spaces and Hecke algebras,
	cf. \cite{HeckmannOpdam1997}.
	A proof of the lemma can be found in 
	\cite[Prop. 3.2.1]{BorodinCorwin2011Macdonald},
	see also \cite[Lemma 3.3 and Prop. 7.4]{BorodinCorwinPetrovSasamoto2013}.
\end{proof}
A limiting case of this lemma, as $q=e^{-\eps}\to1$, 
$z_j=1+\eps \tilde z_j$, $w_j=1+\eps \tilde w_j$,
is at the heart of the moments asymptotics
which were stated in \S \ref{sub:under_intermittency_moments_do_not_define_the_distribution}.


\subsection{Generating functions} 
\label{sub:generating_functions}

The form of the right-hand side of \eqref{unnesting}
suggests that one could take a generating function 
of such expressions over different $k$.
More exactly, it easily implies that
\begin{align}
	\begin{array}{ll}\displaystyle
		\sum_{k\ge0}\mu_k
		\frac{\zeta^{k}}{k!_q}&\displaystyle=
		\sum_{\ell\ge0}\frac{1}{\ell!}
		\sum_{n_1,\ldots,n_\ell\ge1}
		\frac{1}{(2\pi\i)^{\ell}}
		\oint \ldots\oint
		\det\left[\frac{1}{q^{n_i}w_i-w_j}\right]_{i,j=1}^{\ell}
		\times\\&\displaystyle\hspace{50pt}\times\prod_{j=1}^{\ell}
		(1-q)^{n_j}\zeta^{n_j}f(w_j)\ldots
		f(q^{n_j-1}w_j)dw_j.
	\end{array}
	\label{getting_qLaplace}
\end{align}
We will now take $f(z)=\dfrac{e^{(q-1)tz}}{(1-z)^{N}}$,
with all the integration contours above being small enough
positively oriented contours around 1. 
Here $(n_1,\ldots,n_{\ell})$
are simply permuted values of $(\la_1\ge \ldots\ge \la_\ell)$ in 
\eqref{unnesting}, 
and the change of the combinatorial
factor from $\dfrac{1}{m_1!m_2! \ldots}$ to $\dfrac{1}{\ell!}$
is due to that un-ordering.\footnote{Indeed, 
$\dfrac{\ell!}{m_1!m_2! \ldots}$
is the number of different ways to obtain a given $\la=1^{m_1}2^{m_2}\ldots$
with $|\la|=\ell$ from $(n_1,\ldots,n_\ell)\in\Z^{\ell}_{\ge1}$.}

Now, using the $q$-exponential identity
(e.g., see 
\cite{AndrewsAskeyRoy2000}, \cite{GasperRahman})
\begin{align*}
	\sum_{k\ge0}\frac{a^{k}}{k!_q}
	=\prod_{m\ge0}\frac{1}{1-(1-q)aq^{m}}
	=:\frac{1}{\big((1-q)a;q\big)_{\infty}},
\end{align*}
we can rewrite the left-hand side of 
\eqref{getting_qLaplace} as 
\begin{align*}
	\E \frac{1}{\big((1-q)q^{\la^{(N)}_N}\zeta;q\big)_{\infty}}.
\end{align*}
One should expect that in a suitable scaling limit
as $q\to1$
(which we can predict by looking at the moment
asymptotics), 
the $q$-moment generating function
would converge to 
the Laplace transform 
of the polymer partition function.
The latter \emph{does}
define the distribution uniquely, 
with or without intermittency.
The real question now is how to 
take a similar limit in the 
right-hand side of \eqref{getting_qLaplace}.
Observe that termwise limit would produce a 
moment generating series, 
and we already know that it diverges!


\subsection{Case $N=1$ and the Mellin--Barnes integral representation} 
\label{sub:case_n_1_and_the_mellin_barnes_integral_representation}

Let us consider the case $N=1$ in which the problem of convergence
is already there. 
Then $\mu_k$'s are the $q$-moments
of the simple continuous-time
one-sided random walk started from $0$
at $t=0$. 
We expect their $q$-generating function
to converge (as $q\to 1$)
to the Laplace transform of the lognormal distribution
(i.e., $e^{\mathcal{N}(0,\tau)}$).
Indeed, we should expect that because
\begin{align*}
	\frac{1}{\big((1-q)q^{\la^{(1)}_1(t)} \zeta;q\big)_{\infty}}
	\stackrel{\varepsilon\to+0}{\xrightarrow{\hspace*{1cm}}} 
	e^{-ue^{-T_{1,1}(\tau)}},
	\quad
	\la^{(1)}_{1}\sim \frac{\tau}{\eps^{2}}
	+\frac{T_{1,1}(\tau)}{\eps},\quad
	\zeta=e^{\tau\eps^{-1}}u,
\end{align*}
where $T_{1,1}(\tau)\sim \mathcal{N}(0,\tau)$.

Observe that we only have first order poles at $w_j=1$
in the right-hand side of \eqref{getting_qLaplace}
for $N=1$. Because of vanishing of the 
$\det\left[\dfrac{1}{q^{n_i}w_i-w_j}\right]_{i,j=1}^{\ell}$
for equal values of the $w_j$'s, we conclude that 
only $\ell\le1$ give a nontrivial contribution. 
This contribution is
\begin{align}\nonumber
	1+&\sum_{n\ge1}\frac{1}{2\pi\i}
	\oint_{\Gamma_1}\frac{dw}{(q^{n}-1)w}(1-q)^{n}
	\zeta^{n}\frac{e^{(q^{n}-1)tw}}
	{(1-w)(1-qw)\ldots(1-q^{n-1}w)}\\&=
	1+\sum_{n\ge1}
	\frac{(1-q)^{n}\zeta^{n}}{1-q^{n}}
	\frac{e^{(q^{n}-1)t}}
	{(1-q)\ldots(1-q^{n-1})}
	\label{N1_MB_example_1}
	=
	\sum_{n\ge0}\frac{\big((1-q)\zeta\big)^{n}e^{(q^{n}-1)t}}{(1-q)\ldots(1-q^{n-1})}.
\end{align}

We now need to take the $q\to1$ limit in the above sum,
and we cannot do that termwise as this would result in a divergent
series.
A standard tool of the theory
of special functions used for
dealing with such a limit
is the \emph{Mellin--Barnes integral representation}
which dates back to the end of the 19th century.
In its simplest incarnation, it says that
\begin{align*}
	\sum_{n\ge0}g(q^{n})\zeta^{n}=\frac{1}{2\pi\i}
	\oint_{\Gamma_{0,1,2,\ldots}} 
	\Gamma(-s)\Gamma(1+s)(-\zeta)^{s}g(q^{s})
	ds,\qquad 
	|\arg(\zeta)|<\pi,
\end{align*}
where the integral in the right-hand side goes in the negative direction 
around the poles $s=0,1,2,\ldots$.\footnote{Note that
$\Gamma(-s)\Gamma(1+s)=-\dfrac{\pi}{\sin(\pi s)}$.} Indeed,
\begin{align*}
	-\Res_{s=n}\Gamma(-s)\Gamma(1+s)(-\zeta)^{s}g(q^{s})=
	g(q^{n})\zeta^{n},
\end{align*}
where we assume $z^{s}$ to be defined 
with the branch cut $(-\infty,0)$.
Omitting convergence and contour deformation
justifications (which can be performed),
we rewrite the series in \eqref{N1_MB_example_1}
as
\begin{align}&\label{N1_MB_example_2}
	\sum_{n\ge0}\frac{\big((1-q)\zeta\big)^{n}e^{(q^{n}-1)t}}{(1-q)\ldots(1-q^{n-1})}
	\\&\hspace{40pt}=\nonumber
	\frac{1}{2\pi\i}
	\int_{\delta-\i\infty}^{\delta+\i\infty}
	\Gamma(-s)\Gamma(1+s)\big((q-1)\zeta\big)^{s}
	e^{(q^{s}-1)t}
	\frac{\prod_{m\ge1}(1-q^{s+m})}{\prod_{m\ge1}(1-q^{m})}ds,
\end{align}
where 
$0<\delta<1$ and
the integration is taken over 
a contour as on Fig.~\ref{fig:mellin_barnes}.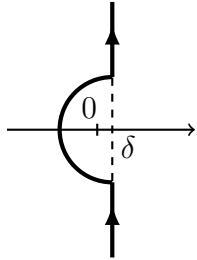
\begin{figure}[htbp]
\begin{center}
	\begin{tikzpicture}
		[scale=1, ultra thick]
		\def\x{.08}
		\def\r{.7}
		\draw[->,thick] (-1.4,0)--(1.1,0);
		\draw[thick] (-.2,-\x)--(-.2,\x) node [above, yshift=-2,xshift=-3] {$0$};
		\node[below, yshift=0,xshift=6] at (0,\x) {$\delta$};
		\draw[decoration={markings,
		      mark=at position .7 with {\arrow{latex}}}, 
		      postaction={decorate}] 
	    (0,-1-\r)--(0,-\r);
		\draw[decoration={markings,
		      mark=at position .7 with {\arrow{latex}}}, 
		      postaction={decorate}] 
		(0,\r)--(0,1+\r);
		\draw[ultra thick] (0,-\r) arc (-90:-270:\r);
		\draw[dashed, thick] (0,\r+.2)--(0,-\r-.2);
	\end{tikzpicture}
\end{center}
\caption{Integration contour in \eqref{N1_MB_example_2}.}
\label{fig:mellin_barnes}
\end{figure}
We can now take the needed limit. We note that
\begin{align*}
	\Gamma_{q}(x)=\prod_{m\ge1}
	\frac{1-q^{m}}{1-q^{x+m-1}}(1-q)^{1-x}
\end{align*}
is the $q$-analogue of the Euler $\Gamma$-function, 
and that (e.g., see \cite{AndrewsAskeyRoy2000})
\begin{align*}
	\lim_{q\to1}\Gamma_{q}(x)=\Gamma(x),
	\qquad x\notin\{0,-1,-2,\ldots\}.
\end{align*}
We take the scaling
\begin{align*}
	q=e^{-\eps}\to1,\qquad t=\frac{\tau}{\eps^{2}},\qquad
	\zeta=e^{\tau\eps^{-1}}u,
\end{align*}
so that
\begin{align*}
	\big((q-1)\zeta\big)^{s}=\big(
	(e^{-\eps}-1)e^{\tau\eps^{-1}(-u)}
	\big)^{s}\sim \eps^{s}e^{\tau s\eps^{-1}}u^s,
	\qquad 
	e^{(q^{s}-1)t}\sim e^{-\tau s\eps^{-1}+\frac{\tau s^{2}}2},
\end{align*}
and 
\begin{align*}
	\prod_{m\ge1}\frac{1-q^{s+m}}{1-q^{m}}
	=\frac{(1-q)^{-s}}{\Gamma_{q}(s+1)}
	\sim \eps^{-s}\frac{1}{\Gamma(s+1)}.
\end{align*}
We see that the limit of the integral 
in \eqref{N1_MB_example_2} is
\begin{align*}
	\frac{1}{2\pi\i}
	\int_{-\frac12-\i\infty}^{-\frac12+\i\infty}
	\Gamma(-s)\Gamma(1+s)\frac{u^{s}}{\Gamma(1+s)}e^{\tau s^{2}/2}ds
	=\frac{1}{2\pi\i}
	\int_{-\frac12-\i\infty}^{-\frac12+\i\infty}
	\Gamma(-s)u^{s}e^{\tau s^2/2}ds,
\end{align*}
which is a correct expression for the Laplace
transform of the lognormal random variable
$e^{\mathcal{N}(0,\tau)}$, as we expected.


\subsection{Asymptotics of the generating function for any $N\ge1$} 
\label{sub:asymptotics_of_the_generating_function_for_any_nge1_}

The same Mellin--Barnes integral representation
works for any $N\ge1$.
The summations over $n_1,\ldots,n_\ell\ge1$
are replaced by
integrals over $\frac{1}{2}+\i \R$
with $\Gamma(-s_j)\Gamma(1+s_j)$
inside, and using scaling
of \S \ref{sub:polymer_limit}
together
with Theorem \ref{thm:polymer_limit}
(which guarantees convergence of expectations of bounded functions),
as well as the asymptotic relations above (setting $v_j=q^{w_j}$), we obtain
the following generating function for the 
semi-discrete Brownian polymer's partition function
(\S \ref{sub:polymer_limit}):
\begin{theorem}\label{thm:Laplace_polymer}
	Fix $N\ge1$, $0<\delta_2<1$, and $0<\delta_1<\delta_2/2$.
	Then\footnote{Note that the time parameter $t$ in \eqref{Laplace_polymer}
	was denoted by $\tau$ in \S \ref{sub:polymer_limit} and 
	\S \ref{sub:case_n_1_and_the_mellin_barnes_integral_representation}.}
	\begin{align}
		\nonumber
		\E e^{-ue^{-T_{N,N}}(t)}
		&=1+\sum_{\ell\ge1}
		\frac{1}{\ell!}
		\oint\limits_{|v_1|=\delta_1}dv_1 \ldots\oint\limits_{|v_\ell|=\delta_1}dv_\ell
		\int\limits_{\delta_2-\i\infty}^{\delta_2+\i\infty}
		ds_1\ldots
		\int\limits_{\delta_2-\i\infty}^{\delta_2+\i\infty}
		ds_\ell\\&\hspace{70pt}\times
		\prod_{j=1}^{\ell}
		\Gamma(-s_j)\Gamma(1+s_j)
		\left(
		\frac{\Gamma(v_j)}{\Gamma(s_j+v_j)}
		\right)^{N}\times
		\nonumber
		\\\label{Laplace_polymer}
		&\hspace{70pt}\times
		\frac{u^{s_j+v_j}}{u^{v_j}}
		\frac{e^{\frac{t}{2}(s_j+v_j)^{2}}}
		{e^{\frac t2 v_j^{2}}}
		\det\left[
		\frac{1}{v_i+s_i-v_j}\right]_{i,j=1}^{\ell}.
	\end{align}
\end{theorem}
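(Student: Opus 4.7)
The plan is to derive \eqref{Laplace_polymer} as the $q\to 1$ scaling limit of a corresponding $q$-Laplace transform identity for the $q$-TASEP, with the nested contour integral formula \eqref{qTASEP_moments} as the starting point. The motivation, emphasized in the previous section, is that (unlike the moments) the $q$-Laplace transform $\E\bigl[1/\bigl((1-q)q^{\la^{(N)}_N}\zeta;q\bigr)_\infty\bigr]$ is uniformly bounded for $q\in(0,1)$ and $\zeta$ in a suitable domain, and thus not sensitive to intermittency. Moreover, under the polymer scaling from \S\ref{sub:polymer_limit} one has pointwise $1/\bigl((1-q)q^{\la^{(N)}_N}\zeta;q\bigr)_\infty\to e^{-ue^{-T_{N,N}(t)}}$, so the weak convergence supplied by Theorem~\ref{thm:polymer_limit} converts a $q$-Laplace identity holding at each finite $\eps$ into the desired identity in the limit.

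Concretely, I would proceed in three algebraic steps followed by one asymptotic step. First, combining the un-nesting Lemma~\ref{lemma:unnesting} for $f(z)=e^{(q-1)tz}/(1-z)^N$ with the $q$-exponential identity $\sum_k a^k/k!_q=1/((1-q)a;q)_\infty$ produces \eqref{getting_qLaplace}: an $\ell$-sum of multifold integrals over \emph{identical} small contours $|w_j-1|=O(\eps)$ around $1$, with inner sums over $n_j\ge 1$. Second, I would apply the Mellin--Barnes representation
$$
\sum_{n\ge 1}g(n)\,z^n=-\frac{1}{2\pi\i}\int_{\delta_2-\i\infty}^{\delta_2+\i\infty}\Gamma(-s)\Gamma(1+s)(-z)^s g(s)\,ds,\qquad 0<\delta_2<1,
$$
to each $n_j$-sum (with $z=(1-q)\zeta$), converting the $\ell$-fold sum into an $\ell$-fold integral over vertical lines $\Re s_j=\delta_2$, with prefactors $\prod_j\Gamma(-s_j)\Gamma(1+s_j)$. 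Third, I would change variables $w_j=q^{v_j}$, turning the shrinking $w_j$-contour into the fixed $v_j$-contour $|v_j|=\delta_1$ of \eqref{Laplace_polymer}; the condition $\delta_1<\delta_2/2$ ensures the poles $v_i+s_i=v_j$ of the forthcoming determinant stay outside the $v$-contours.

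The asymptotic step uses $q=e^{-\eps}$, $t=\tau\eps^{-2}$, $\zeta=e^{\tau/\eps}u$ together with the termwise expansions $((q-1)\zeta)^{s_j}\sim(-\eps)^{s_j}e^{\tau s_j/\eps}u^{s_j}$, $e^{(q^{s_j}-1)tw_j}\sim e^{-\tau s_j/\eps}\exp\bigl(\tfrac{\tau}{2}((s_j+v_j)^2-v_j^2)\bigr)$, $\prod_{k=0}^{s_j-1}(1-q^{v_j+k})^N\sim\eps^{Ns_j}\bigl(\Gamma(v_j+s_j)/\Gamma(v_j)\bigr)^N$ (from $\Gamma_q\to\Gamma$), $\det[1/(q^{s_i+v_i}-q^{v_j})]\sim(-\eps)^{-\ell}\det[1/(s_i+v_i-v_j)]$, and $dw_j=-\eps\,q^{v_j}\,dv_j$. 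The factors $e^{\pm\tau s_j/\eps}$ cancel between the $((q-1)\zeta)^{s_j}$ and the $e^{(q^{s_j}-1)tw_j}$; the $(-\eps)^{\pm\ell}$ from the determinant and the measure cancel; and the remaining $\eps$-powers together with the branch-dependent phase $(-1)^{s_j}$ in $(-\eps)^{s_j}/\eps^{Ns_j}$ are absorbed into the definition of $u^{s_j+v_j}/u^{v_j}$ as dictated by the Mellin--Barnes branch of $(-z)^s$. Feeding these termwise limits back into \eqref{getting_qLaplace} produces precisely the integrand of \eqref{Laplace_polymer}.

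The principal obstacle is to promote these termwise asymptotics to an honest limit of integrals. Because the $s_j$-integrations are over unbounded vertical lines and the outer sum over $\ell$ is infinite, one needs $\eps$-independent bounds enabling dominated convergence. The key inputs are: (i) the decay $|\Gamma(-s_j)\Gamma(1+s_j)|=\pi/|\sin(\pi s_j)|\sim 2\pi e^{-\pi|\Im s_j|}$ on $\Re s_j=\delta_2$; (ii) a uniform-in-$\eps$ bound on the $q$-Gamma ratio $|\Gamma_q(v_j+s_j)/\Gamma_q(v_j)|$ along the contours, extracted from the explicit product representation; (iii) Gaussian-type suppression from $e^{\frac{\tau}{2}(s_j+v_j)^2}$ on the compact $v_j$-contour combined with the vertical $s_j$-line; and (iv) a geometric bound on the $\ell$-th term guaranteeing summability. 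Together with the weak convergence of Theorem~\ref{thm:polymer_limit} applied to the bounded observable on the LHS, these complete the argument; the technical estimates required essentially appear in \cite{BorodinCorwin2011Macdonald}, to which this theorem is due.
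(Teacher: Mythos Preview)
Your proposal is correct and follows essentially the same route as the paper: un-nest the $q$-moment contours via Lemma~\ref{lemma:unnesting}, assemble the $q$-Laplace transform \eqref{getting_qLaplace}, replace each $n_j$-sum by a Mellin--Barnes integral, change variables $w_j=q^{v_j}$, and pass to the limit under the polymer scaling of \S\ref{sub:polymer_limit}, invoking Theorem~\ref{thm:polymer_limit} on the bounded left-hand side and dominated convergence (with estimates deferred to \cite{BorodinCorwin2011Macdonald}) on the right. One minor point to tidy: the scaling of $\zeta$ for general $N$ must include an additional factor $\eps^{-(N-1)}$ coming from the $(N-1)\ln\eps^{-1}/\eps$ term in \eqref{laj_polymer_scaling}, which is exactly what absorbs the leftover $\eps^{(1-N)s_j}$ you flagged; once this is inserted your $\eps$-bookkeeping closes cleanly.
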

The expression in the right-hand side above is 
actually well-suited for further asymptotic analysis.
Let us first state the final result:
\begin{theorem}[{\cite{BorodinCorwin2011Macdonald}, \cite{BorodinCorwinFerrari2012}}]
\label{thm:TW_convergence}
	For any $\varkappa>0$, define
	\begin{align*}
		f_\varkappa=\min_{s>0}(\varkappa s-\psi(s)),\qquad
		s_{\varkappa}=\argmin_{s>0}(\varkappa s-\psi(s)),
		\qquad
		g_{\varkappa}=-\psi''(s_{\varkappa})>0
	\end{align*}
	(as before, $\psi(z)=\big(\log \Gamma(z)\big)'$).
	Then for $t=\varkappa N$, we have
	\begin{align*}
		\lim_{N\to\infty}
		\Prob\left\{
		\frac{-T_{N,N}(t)-N f_{\varkappa}}{N^{1/3}}\le r
		\right\}=F_{GUE}\left(\left(\frac{g_{\varkappa}}{2}\right)^{-\frac13}r\right),
	\end{align*}
	where $F_{GUE}$ is the GUE Tracy--Widom distribution.
\end{theorem}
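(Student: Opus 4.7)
My plan is to derive the Tracy--Widom limit directly from the Laplace transform formula \eqref{Laplace_polymer} by recognizing it as a Fredholm determinant and then performing steepest descent on its kernel. The first step is to rewrite the right-hand side of \eqref{Laplace_polymer} as a Fredholm determinant $\det(I+K_u)$ on $L^2(C_v)$, where $C_v$ is a small positively oriented loop around the origin in the $v$-variable. The determinantal factor $\det[1/(v_i+s_i-v_j)]$ together with the symmetrizing $1/\ell!$ and the product over $j$ exhibit exactly the structure of the $\ell$th term in the Fredholm expansion of a kernel
\begin{equation*}
K_u(v,v')=\frac{1}{2\pi\i}\int_{\delta_2-\i\infty}^{\delta_2+\i\infty}
\Gamma(-s)\Gamma(1+s)\left(\frac{\Gamma(v)}{\Gamma(s+v)}\right)^{N}
\frac{u^{s}\,e^{t(sv+s^{2}/2)}}{v+s-v'}\,ds,
\end{equation*}
so that $\E\exp(-u e^{-T_{N,N}(t)})=\det(I+K_u)$. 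This identification is purely algebraic and uses only the Cauchy-type structure of the determinant.

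Next I would choose the critical scaling. Setting $t=\varkappa N$ and
\begin{equation*}
u=u_N(r):=\exp\bigl(-Nf_{\varkappa}-(g_{\varkappa}/2)^{1/3}rN^{1/3}\bigr),
\end{equation*}
the observable $\exp(-u_N(r)e^{-T_{N,N}(t)})$ converges pointwise to the indicator $\mathbf{1}_{-T_{N,N}-Nf_{\varkappa}\le (g_\varkappa/2)^{1/3}rN^{1/3}}$ on the event that $T_{N,N}$ concentrates (which the moment Lyapunov exponent $\tilde\gamma_1=f_\varkappa$ from the replica discussion makes plausible). Hence it suffices to show $\det(I+K_{u_N(r)})\to F_{\mathrm{GUE}}(r)$. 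Under this scaling the integrand of $K_u$ factors as $\exp\bigl[N\,G(v,s)+O(1)\bigr]$ with
\begin{equation*}
G(v,s)=\log\Gamma(v)-\log\Gamma(s+v)-f_{\varkappa}s+\varkappa sv+\tfrac{\varkappa}{2}s^{2}.
\end{equation*}
A direct computation using $\psi(s_\varkappa)=\varkappa s_\varkappa-f_\varkappa/\dots$ (more precisely the defining relation $\psi(s_\varkappa)=\varkappa$ from $\partial_s(\varkappa s-\psi(s))=0$ at $s=s_\varkappa$) shows that $(v,s)=(s_{\varkappa},0)$ is a \emph{doubly} critical point: the Hessian of $G$ at this point is degenerate and the leading nontrivial expansion is cubic,
\begin{equation*}
G(s_\varkappa+V,S)-G(s_\varkappa,0)=\tfrac{1}{6}\psi''(s_{\varkappa})\,(V+S)^{3}-\tfrac{1}{6}\psi''(s_{\varkappa})V^{3}+O(4),
\end{equation*}
after using $\psi'(s_\varkappa)=\varkappa$. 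This cubic degeneracy is the signature of Airy asymptotics.

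The third step is the steepest descent itself. I would deform the $v$ contour from $|v|=\delta_1$ to a contour through $s_\varkappa$ departing at angles $\pm 2\pi/3$, and the $s$ contour from the vertical line $\Re s=\delta_2$ to one through $0$ departing at angles $\pm\pi/3$, chosen so that $\Re G$ decreases along them away from the critical point. Contour deformations must cross various poles of $\Gamma(s+v)^{-N}$ and of $(v+s-v')^{-1}$; these residues are estimated as in \cite{BorodinCorwin2011Macdonald} and shown to give exponentially subleading contributions. Rescaling $v=s_\varkappa+(g_\varkappa/2)^{1/3}N^{-1/3}\tilde v$ and $s=(g_\varkappa/2)^{1/3}N^{-1/3}\tilde s$ (with $g_\varkappa=-\psi''(s_\varkappa)$), the cubic expansion together with the linear term from $u_N(r)$ transforms the kernel $K_{u_N(r)}$ into the Airy kernel on the shifted half-line, namely
\begin{equation*}
K_{u_N(r)}(v,v')\,dv\ \longrightarrow\ -K_{\mathrm{Airy}}^{(r)}(\tilde v,\tilde v')\,d\tilde v,
\end{equation*}
in the form with $\Gamma(-\tilde s)/(\tilde v+\tilde s-\tilde v')$ and cubic exponents $e^{\tilde s^3/3+r\tilde s}$ etc., which is a standard contour representation of the Airy kernel shifted by $r$.

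The fourth step is to conclude $\det(I+K_{u_N(r)})\to\det(I-K_{\mathrm{Airy}})_{L^2(r,\infty)}=F_2(r)$. The main obstacle, and what requires the most care, is the uniform trace-norm control of the Fredholm expansion: the series $\sum_\ell \frac{1}{\ell!}\int\cdots\int|\det[\cdots]|$ must be dominated uniformly in $N$ to allow termwise passage to the limit and to exchange the limit with the Fredholm determinant. This requires Hadamard's bound combined with explicit Gaussian-type decay estimates on $|(\Gamma(v)/\Gamma(s+v))^N e^{ts^2/2}|$ along the descent contours, together with careful tracking of the $\Gamma(-s)\Gamma(1+s)$ factors whose decay on vertical lines is only $O(e^{-\pi|\Im s|})$. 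Once these estimates are established, the pointwise convergence of the kernel under the cubic rescaling combined with trace-norm tightness gives convergence of Fredholm determinants, identifying the limit with $F_{\mathrm{GUE}}((g_\varkappa/2)^{-1/3}r)$ after undoing the $(g_\varkappa/2)^{1/3}$ rescaling.
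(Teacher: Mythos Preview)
Your proposal is correct and follows essentially the same route as the paper (and the cited references \cite{BorodinCorwin2011Macdonald}, \cite{BorodinCorwinFerrari2012}): recast \eqref{Laplace_polymer} as a Fredholm series/determinant, choose $\log u\sim -Nf_\varkappa - rN^{1/3}$, and do steepest descent to reach an Airy-type kernel. The one organizational difference worth noting is that the paper first makes the change of variables $s_j\mapsto y_j:=s_j+v_j$, after which the large-$N$ part of the integrand becomes $\prod_j\exp\bigl(N(G(v_j)-G(y_j))\bigr)$ for the \emph{single-variable} function $G(z)=\ln\Gamma(z)+f_\varkappa z-\tfrac{\varkappa}{2}z^2$; the Airy scaling then comes from the fact that $z_c=s_\varkappa$ is a \emph{double} critical point, $G'(z_c)=G''(z_c)=0$, with $G'''(z_c)=-g_\varkappa$. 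Your two-variable function $G(v,s)$ is exactly $G(v)-G(v+s)$ in these terms, so your degenerate-Hessian condition at $(s_\varkappa,0)$ is equivalent; the one-variable packaging just makes the contour deformations and the cubic expansion cleaner and more standard. (Minor slip: the defining relation from $\partial_s(\varkappa s-\psi(s))=0$ is $\psi'(s_\varkappa)=\varkappa$, not $\psi(s_\varkappa)=\varkappa$; you use the correct version a line later.)
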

Recall that $-T_{N,N}(t)$ can be
identified with the logarithm
of the polymer partition function 
as in \eqref{polymer_integral} (and that
$-T_{N,N}(t)\stackrel{d}{=}T_{N,1}(t)$).

Note that Theorem \ref{thm:TW_convergence}
proves the value of the almost
sure Lyapunov exponent
$\tilde \gamma_1$
that we guessed (for $\varkappa=1$,
but this could have been for any $\varkappa$)
using replica trick in \S \ref{sub:replica_trick}.

The Tracy--Widom distribution in the right-hand side of
\eqref{Laplace_polymer} arises as the series
\begin{align}
	\nonumber
	F_{GUE}\left(\left(\frac{g_{\varkappa}}{2}\right)^{-\frac13}r\right)
	&=1+\sum_{\ell\ge1}
	\frac{1}{\ell!}
	\frac{1}{(2\pi\i)^{2\ell}}
	\int \ldots\int da_1 \ldots da_\ell
	\int \ldots\int db_1 \ldots db_\ell
	\\&\hspace{20pt}\times
	\prod_{j=1}^{\ell}
	\frac{1}{a_j-b_j}
	\frac{\exp\left(-\frac{g_{\varkappa}}{6}a_j^{3}+ra_j\right)}
	{\exp\left(-\frac{g_{\varkappa}}{6}b_j^{3}+rb_j\right)}
	\det\left[\frac{1}{b_i-a_j}\right]_{i,j=1}^{\ell},
	\label{TW}
\end{align}
where the $a_i$ and the $b_j$ contours are 
as on Fig.~\ref{fig:Airy_cont}.\begin{figure}[htbp]
	\begin{tabular}{cc}
		\begin{tikzpicture}
			[scale=1, ultra thick]
			\def\x{.1}
			\draw[->,thick] (-1.5,0)--(.7,0);
			\draw[thick] (0,-\x)--(0,\x) node [below, yshift=-4,xshift=0] {$0$};
			\draw[decoration={markings,
			      mark=at position .7 with {\arrow{latex}}}, 
			      postaction={decorate}] 
		    (-1,-.866)--(-.5,0) node [above, xshift=5] {$a_i$};
			\draw[decoration={markings,
			      mark=at position .7 with {\arrow{latex}}}, 
			      postaction={decorate}] 
			(-.5,0)--(-1,.866);
			\draw[thick] (-.75,0) arc (-180:-120:.25);
			\node at (-1,-.3) {$\frac\pi3$};
		\end{tikzpicture}
		&\hspace{40pt}
		\begin{tikzpicture}
			[scale=1, ultra thick]
			\def\x{.1}
			\draw[->,thick] (-.6,0)--(1.5,0);
			\draw[thick] (0,-\x)--(0,\x) node [below, yshift=-4,xshift=0] {$0$};
			\draw[decoration={markings,
			      mark=at position .7 with {\arrow{latex}}}, 
			      postaction={decorate}] 
		    (1,-.866)--(.5,0) node [above, xshift=-5] {$b_j$};
			\draw[decoration={markings,
			      mark=at position .7 with {\arrow{latex}}}, 
			      postaction={decorate}] 
			(.5,0)--(1,.866);
			\draw[thick] (.75,0) arc (0:-60:.25);
			\node at (1,-.3) {$\frac\pi3$};
		\end{tikzpicture}
	\end{tabular}
	\caption{The integration contours 
	in \eqref{TW}
	for variables
	$a_i$ (left) and $b_j$ (right).}
	\label{fig:Airy_cont}
\end{figure}
The identification of \eqref{TW} with a traditional formula
for $F_{GUE}$ is explained in 
\cite{BorodinCorwin2011Macdonald} (after formula (4.51)).

The way one reaches \eqref{TW} from 
the right-hand side of
\eqref{Laplace_polymer} is fairly straightforward.
By changing the variables $s_j\to y_j=s_j+v_j$,
one rewrites the part of the integrand 
that depends on the large parameter 
$N$ as
\begin{align*}
	\prod_{j=1}^{\ell}\exp\Big(
	N\big(G(v_j)-G(y_j)\big)
	\Big),\qquad
	G(z)=\ln \Gamma(z)-z \frac{\ln u}{N}-\frac{\varkappa}{2}z^{2}.
\end{align*}
Since
\begin{align*}
	e^{-ue^{-T_{N,N}(t)}}=e^{-e^{-T_{N,N}(t)+\log u}},
\end{align*}
we take $\log u\sim -N f_{\varkappa}-rN^{\frac13}$, 
and then we see that
\begin{align*}
	G(z)\sim \ln \Gamma(z)+f_{\varkappa}z- \frac{\varkappa}{2}z^{2}.
\end{align*}
The analysis then follows the scheme explained
in \S \ref{sec:asymptotics}, with $v$
contours 
being deformed to the domain 
with $\Re G(v)<0$, 
and $y$ contours --- to the domain with $\Re G(y)>0$.
The limiting expression
arises in the situation when $G(z)$
has a double critical point
$G'(z_c)=G''(z_c)=0$, 
and through a local 
change of integration variables near the critical point;
the constant $-g_{\varkappa}$ is actually $G'''(z_c)$.
Details can be found in
\cite{BorodinCorwin2011Macdonald} and \cite{BorodinCorwinFerrari2012}.

Let us conclude by observing that if we expand the right-hand side of \eqref{Laplace_polymer}
into residues at $s_j=1,2,\ldots$, we get
back the divergent generating series
for the moments of the polymer partition function that we found before.
This shows that a more sophisticated replica
trick than the one from \S \ref{sub:replica_trick} 
can actually be used to obtain the
limiting distribution, and
not only the law of large numbers
(i.e., $\tilde \gamma_1$).
Namely, one can obtain the moments
by solving the equations (the delta Bose gas of \S \ref{sub:continuous_brownian_polymer})
that they satisfy, write down the series for the
Laplace transform through moments
(despite the fact that this series diverges),
make sense of this series via the Mellin--Barnes
integral representation, and then proceed
with the asymptotic analysis.
This approach was successfully carried out
in physics papers 
\cite{Dotsenko},
\cite{Calabrese_LeDoussal_Rosso}.
However, the only plausible explanation
we have at the moment 
as to why such an approach leads to the correct
answer, is that it is a limiting case of the 
$q$-deformed situation, where all the steps are legal and indeed 
lead to a proof of the GUE edge fluctuations.



\end{document}